\theoremstyle{definition}
\newtheorem{defn}{Definition}[section]
\newtheorem{rmk}[defn]{Remark}
\newtheorem{question}[defn]{Question}
\theoremstyle{plain}
\newtheorem{thm}[defn]{Theorem}
\newtheorem{lem}[defn]{Lemma}
\newtheorem{prop}[defn]{Proposition}
\newtheorem{cor}[defn]{Corollary}
\newtheorem{conj}[defn]{Conjecture}
\def\C{\ensuremath{\mathbb{C}}}
\def\N{\ensuremath{\mathbb{N}}}
\def\P{\ensuremath{\mathbb{P}}}
\def\Q{\ensuremath{\mathbb{Q}}}
\def\R{\ensuremath{\mathbb{R}}}
\def\Z{\ensuremath{\mathbb{Z}}}
\def\AA{\ensuremath{\mathcal A}}
\def\BB{\ensuremath{\mathcal B}}
\def\CC{\ensuremath{\mathcal C}}
\def\DD{\ensuremath{\mathcal D}}
\def\EE{\ensuremath{\mathcal E}}
\def\FF{\ensuremath{\mathcal F}}
\def\HH{\ensuremath{\mathcal H}}
\def\II{\ensuremath{\mathcal I}}
\def\OO{\ensuremath{\mathcal O}}
\def\TT{\ensuremath{\mathcal T}}
\def\ch{\mathop{\mathrm{ch}}\nolimits}
\def\Coh{\mathop{\mathrm{Coh}}\nolimits}
\def\dim{\mathop{\mathrm{dim}}\nolimits}
\def\inf{\mathop{\mathrm{inf}}\nolimits}
\def\End{\mathop{\mathrm{End}}\nolimits}
\def\Ext{\mathop{\mathrm{Ext}}\nolimits}
\def\Hom{\mathop{\mathrm{Hom}}\nolimits}
\def\Ker{\mathop{\mathrm{Ker}}\nolimits}
\def\mod{\mathop{\mathrm{mod}}\nolimits}
\def\num{\mathop{\mathrm{num}}\nolimits}
\def\Pic{\mathop{\mathrm{Pic}}\nolimits}
\def\Rep{\mathop{\mathrm{Rep}}}
\def\rk{\mathop{\mathrm{rk}}}
\def\tilt{\mathop{\mathrm{tilt}}}
\def\td{\mathop{\mathrm{td}}\nolimits}
\def\Stab{\mathop{\mathrm{Stab}}\nolimits}
\def\into{\ensuremath{\hookrightarrow}}
\def\onto{\ensuremath{\twoheadrightarrow}}
\begin{document}

\title[Bridgeland Stability on Threefolds]{Bridgeland Stability on Threefolds -
Some Wall Crossings}

\author{Benjamin Schmidt}
\address{Department of Mathematics, The University of Texas at Austin, 2515 Speedway, Austin, TX 78712, USA}
\email{schmidt@math.utexas.edu}
\urladdr{https://sites.google.com/site/benjaminschmidtmath/}

\keywords{Bridgeland stability conditions, Derived categories, Threefolds,
Hilbert Schemes of Curves}

\subjclass[2010]{14F05 (Primary); 14J30, 18E30 (Secondary)}

\begin{abstract}
Following up on the construction of Bridgeland stability condition on $\P^3$ by Macr\`i, we develop techniques to study concrete wall crossing behavior for the first time on a threefold. In some cases, such as complete intersections of two hypersurfaces of the same degree or twisted cubics, we show that there are two chambers in the stability manifold where the moduli space is given by a smooth projective irreducible variety, respectively the Hilbert scheme. In the case of twisted cubics, we compute all walls and moduli spaces on a path between those two chambers. This allows us to give a new proof of the global structure of the main component, originally due to Ellingsrud, Piene, and Str{\o}mme. In between slope stability and Bridgeland stability there is the notion of tilt stability that is defined similarly to Bridgeland stability on surfaces. Beyond just $\P^3$, we develop tools to use computations in tilt stability to compute wall crossings in Bridgeland stability. 
\end{abstract}


\maketitle

\setcounter{tocdepth}{1}
\tableofcontents

\section{Introduction}

The introduction of stability condition on triangulated categories by Bridgeland in \cite{Bri07} has revolutionized the study of moduli spaces of sheaves on surfaces. We introduce techniques that worked on surfaces into the realm of threefolds. As an application, we deal with moduli spaces of sheaves in $\P^3$. It turns out that for certain classes $v \in K_0(\P^3)$, there is a chamber in the stability manifold $\Stab(\P^3)$, where the corresponding moduli space of stable objects with class $v$ is smooth, projective, and irreducible. The following theorem applies in particular to complete intersections of the same degree, twisted cubics, or the tangent bundle.

\begin{thm}[See Theorem \ref{thm:first_wall_general}]
Let $v = i \ch(\OO_{\P^3}(m)) - j \ch(\OO_{\P^3}(n))$ where $m,n \in \Z$ are integers with $n < m$ and $i,j \in \N$ are positive integers. Assume that $(v_0, v_1, v_2)$ is primitive in $\Z \oplus \Z \oplus \tfrac{1}{2}\Z$. There is a path $\gamma: [0,1] \to \Stab(\P^3)$ that satisfies the following properties.
\begin{enumerate}
  \item At $\gamma(0)$ the moduli space of semistable objects with class $v$ is empty.
  \item After the first wall on $\gamma$ the moduli space of semistable objects with class $v$ is smooth, irreducible, and projective.
  \item If $i \geq j$, then at $\gamma(1)$ the semistable objects with class $v$ are exactly slope stable $E \in \Coh(\P^3)$ with $\ch(E) = v$. In particular, there are no strictly semistable objects.
\end{enumerate}
\end{thm}

As an example, we compute all walls on the path of the last Theorem in the case of twisted cubics. Figure \ref{fig:bridgeland_walls_twisted} is a graphical representation of these walls.

\begin{thm}[See Theorem \ref{thm:bridgeland_twisted_cubics}]
Let $v = (1, 0, -3, 5) = \ch(\II_C)$ where $C \subset \P^3$ is a twisted cubic curve. There is a path $\gamma: [0,1] \to \Stab(\P^3)$ such that the moduli spaces of semistable objects with Chern character $v$ in its image outside of walls are given in the following order.
\begin{enumerate}
  \item The empty space $M_0 = \emptyset$.
  \item A smooth projective variety $M_1$ that contains ideal sheaves of twisted cubic curves as an open subset.
  \item A space with two components $M_2 \cup M'_2$ intersecting transversally. The space $M_2$ is a blow up of $M_1$ in a smooth locus. The intersection of the two components is precisely the exceptional locus of the blow up. It parametrizes plane singular cubic curves with a spatial embedded point at a singularity. The second component $M'_2$ is a $\P^9$-bundle over $\P^3 \times (\P^3)^{\vee}$. An open subset in $M'_2$ parametrizes plane cubic curves together with a potentially but not necessarily embedded point that is not scheme theoretically contained in the plane. 
  \item The Hilbert scheme of curves $C$ with $\ch(\II_C) = (1, 0, -3, 5)$. It is given as $M_2 \cup M'_3$, where $M'_3$ is a blow up of $M'_2$ in a smooth locus away from $M_2 \cap M_2'$. The exceptional locus parametrizes plane cubic curves together with a point scheme theoretically contained in the plane.
\end{enumerate}
\end{thm}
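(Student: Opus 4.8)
The plan is to reduce all stability computations to tilt stability on $\P^3$, where walls are explicit nested semicircles in the $(\alpha,\beta)$-plane and everything is governed by the Bogomolov inequality, and then to transport the resulting chamber decomposition into the double-tilted hearts carrying Macr\`i's Bridgeland stability conditions, using the comparison tools of the previous sections together with Theorem \ref{thm:first_wall_general}.

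First I would invoke Theorem \ref{thm:first_wall_general} with the presentation $v = 3\,\ch(\OO_{\P^3}(-2)) - 2\,\ch(\OO_{\P^3}(-3))$, which records the minimal free resolution $0\to\OO_{\P^3}(-3)^{\oplus 2}\to\OO_{\P^3}(-2)^{\oplus 3}\to\II_C\to 0$ of a twisted cubic. This supplies a path $\gamma$ and three of the chambers: the empty one $M_0$, the chamber $M_1$ adjacent to it, and the large-volume (Gieseker) chamber in which the moduli space is the full Hilbert scheme $\Hilb^{3t+1}(\P^3)$. The next task is to identify $M_1$: there the stable objects are the cones of maps $\OO_{\P^3}(-3)^{\oplus 2}\to\OO_{\P^3}(-2)^{\oplus 3}$, so $M_1$ is a twelve-dimensional smooth projective irreducible variety containing the ideal sheaves of twisted cubics as a dense open subset; it can be described via the equivalence of $\Db(\P^3)$ with a suitable quiver with relations, as a moduli space of representations for a generic weight, or as the GIT quotient of $2\times 3$ matrices of linear forms by $\GL_2\times\GL_3$, and smoothness follows either from such a description or from an $\Ext^{2}$ tangent-obstruction computation.

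The heart of the argument is then the region between the $M_1$-chamber and the large-volume chamber. I would first enumerate the tilt-stability walls for the truncated class $(1,0,-3)$, bound them via the discriminant, and single out the finitely many realized ones; the destabilizers of interest come from the filtration $0\to\OO_{\P^3}(-1)\to\II_D\to\OO_H(-3)\to 0$ of the ideal sheaf of a plane cubic $D\subset H$. A key observation is that classes differing from $v$ by a point have the same truncation $\ch_{\le 2}$, so the wall controlling the position of the embedded point relative to the plane of the cubic is invisible to tilt stability and must be located directly in the Bridgeland slice; I expect exactly two walls, so that the intermediate chamber carries a moduli space $M_2\cup M'_2$ with two components. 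At each wall the moduli space transforms by the standard elementary modification recipe: the locus of objects admitting a given destabilizing sequence $0\to A\to E\to B\to 0$ is replaced according to the pair $\bigl(\ext^1(A,B),\ext^1(B,A)\bigr)$. Crossing out of the $M_1$-chamber, the data ``plane cubic plus a free point'' build the new component $M'_2$, a $\P^9$-bundle (linear systems of plane cubics) over $\P^3\times(\P^3)^{\vee}$ (point and plane), while $M_1$ is blown up along the smooth locus of plane singular cubics carrying a spatial embedded point at a singularity, giving $M_2=\mathrm{Bl}(M_1)$. At the last wall $M_2$ is untouched and $M'_2$ is blown up along the smooth locus where the point lies scheme-theoretically in the plane, producing $M'_3$, and since the objects are now honest ideal sheaves $M_2\cup M'_3$ must be $\Hilb^{3t+1}(\P^3)$. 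I would finish by matching this with the Piene--Schlessinger picture of the Hilbert scheme as two smooth components of dimensions $12$ and $15$ meeting transversally along an $11$-dimensional locus.

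The main obstacle is not the wall numerology but the geometric identification of the moduli spaces: proving that each wall-crossing modification is genuinely a blow-up in a \emph{smooth} centre with exactly the stated exceptional locus, that $M_1$ is smooth and irreducible (smoothness of Bridgeland moduli spaces is not automatic on a threefold), and that the extreme chamber really reproduces the full reducible Hilbert scheme with its classical incidence geometry. This forces one to write out explicitly the complexes parametrized on each side of each wall and to translate between the derived-category bookkeeping and the projective geometry of space curves.
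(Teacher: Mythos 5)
Your outline follows the paper's own route almost step for step: Theorem \ref{thm:first_wall_general} applied to $v=3\ch(\OO(-2))-2\ch(\OO(-3))$ supplies the empty chamber, the smooth irreducible Kronecker-moduli chamber $M_1$, and the slope-stable (Hilbert scheme) chamber; the tilt-wall computation (Theorem \ref{thm:tiltStabilityTwistedCubics}) transported via Theorem \ref{thm:wall_intersecting_hyperbola} produces exactly the two Bridgeland walls $(\OO(-1),\II_{Q/V}(-4))$ and $(\II_P(-1),\OO_V(-4))$ into which the single tilt wall splits; and the same $\Ext^1$ computations identify $M'_2$ as a $\P^9$-bundle over $\P^3\times(\P^3)^{\vee}$ and describe the contracted loci. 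So the approach is essentially the paper's.

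The genuine gap is precisely the step you defer as ``the main obstacle'': nothing in the proposal proves that the two modifications are blow-ups along smooth centres. The recipe via the pair $(\ext^1(A,B),\ext^1(B,A))$ only gives a set-theoretic, fibrewise picture. The paper closes this with Moishezon's criterion \cite{Moi67} (a proper birational morphism of smooth projective varieties contracting an irreducible divisor onto a smooth image is a blow-up), and to feed that criterion it needs smoothness of $M_1$ (quiver moduli), smoothness of $M'_2$ (the $\P^9$-bundle), and, crucially, the Piene--Schlessinger theorem \cite{PS85} as an \emph{input} rather than a final cross-check: it is \cite{PS85} that gives smoothness and irreducibility of the Hilbert-scheme component $M_2$ and irreducibility of the intersection divisor, which is the locus contracted by $M_2\to M_1$. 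In your plan \cite{PS85} appears only at the end as ``matching,'' but without it you have no source of smoothness for $M_2$ (smoothness of Bridgeland moduli on a threefold is not automatic, as you yourself note), and you cannot get it from ``blow-up of the smooth $M_1$'' without circularity, since Moishezon's criterion already requires both source and target to be smooth. You must either import \cite{PS85} as the paper does, or explicitly construct universal families on both sides of each wall and verify the blow-up structure directly --- the paper remarks that this alternative should be possible but does not carry it out.
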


The Hilbert scheme of twisted cubics has been heavily studied. In \cite{PS85} it was shown that it has two smooth irreducible components of dimension $12$ and $15$ intersecting transversely in a locus of dimension $11$. In \cite{EPS87} it was shown that the closure of the space of twisted cubics in this Hilbert scheme is the blow up of another smooth projective variety in a smooth locus. While we still use the result in \cite{PS85}, we give a new proof of the description in \cite{EPS87}. Additionally, we also describe the second component in more detail. Based on the work in this article, it was recently shown how to remove the reliance on \cite{PS85} in \cite{Xia16}.

The literature on Hilbert schemes in projective space from a more classical point of view is vast. It turns out that the geometry of these spaces can be quite badly behaved. For example Mumford observed that there is an irreducible component in the Hilbert scheme in $\P^3$ containing smooth curves that is generically non reduced in \cite{Mum62}. However, Hartshorne proved that Hilbert schemes in projective space are at least connected in \cite{Har66}.

\subsection{Ingredients}

Bridgeland's original work was motivated by Calabi-Yau threefolds and related questions in physics. A fundamental issue in the theory of stability conditions on threefolds is the actual construction of Bridgeland stability conditions. A conjectural way was proposed in \cite{BMT14}, and has been proven for $\P^3$ in \cite{MacE14}, for the smooth quadric threefold in \cite{Sch14}, and for abelian threefolds in both \cite{MP13a, MP13b} and \cite{BMS14}. Tilt stability has been introduced in \cite{BMT14} as an intermediate notion between classical slope stability and Bridgeland stability on a smooth projective threefold $X$ over $\C$. The construction is analogous to Bridgeland stability on surfaces. The heart is a certain abelian category of two term complexes defined as a tilt of $\Coh(X)$, while the central charge is given by
$$Z^{\tilt}_{\alpha, \beta} = -H \cdot \ch_2^{\beta} + \frac{\alpha^2}{2} H^3 \cdot \ch_0^{\beta} + i H^2 \cdot \ch_1^{\beta},$$
where $H \in \Pic(X)$ is ample, $\alpha > 0$, $\beta \in \R$, and $\ch^{\beta} = e^{-\beta H} \cdot \ch$ is the twisted Chern character. More details on the construction is given in Section \ref{sec:construction}. Many techniques from the surfaces case still apply to tilt stability. Bayer, Macr\`i, and Toda propose that doing another tilt and choosing $s > 0$ will lead to a Bridgeland stability condition with central charge 
$$Z_{\alpha,\beta,s} = -\ch^{\beta}_3 + (s+\tfrac{1}{6})\alpha^2 H^2 \cdot \ch^{\beta}_1 + i (H \cdot \ch^{\beta}_2 - \frac{\alpha^2}{2} H^3 \cdot \ch^{\beta}_0).$$

While walls in tilt stability are well behaved and often computable, it is generally difficult to determine how a given moduli space changes at a wall. This issue arises from the fact that strictly semistable objects generally do not have Jordan-H\"older filtrations with unique stable factors in tilt stability. This is a non-issue in Bridgeland stability, but the price to pay is a hugely complicated structure of walls that is very difficult to handle. The following theorem attempts to resolve this catch-22 by relating the walls of both notions. It is one of the key ingredients for the two theorems above. If $v$ is the Chern character of an object in $D^b(X)$, we say that $(H^3 \cdot v_0, H^2 \cdot v_1, H \cdot v_2)$ is \emph{primitive} if it can not be written as $m(H^3 \cdot w_0, H^2 \cdot w_1, H \cdot w_2)$ for an integer $m \geq 2$ and the Chern character $w$ of a different object in $D^b(X)$.

\begin{thm}[See also Theorem \ref{thm:wall_intersecting_hyperbola}]
Assume that the construction of Bridgeland stability by Bayer, Macr\`i, and Toda works on $X$. Moreover, let $v$ be the Chern character of an object in $D^b(X)$ such that $(H^3 \cdot v_0, H^2 \cdot v_1, H \cdot v_2)$ is primitive. Then there are two paths $\gamma_1, \gamma_2 : [0,1] \to \Stab(X)$ with the following property. If $M$ is a moduli space of tilt stable objects for some $\alpha > 0$, $\beta \in \R$ that does not lie on any wall, then $M$ is a moduli space of Bridgeland stable objects along either $\gamma_1$ or $\gamma_2$.
\end{thm}

Note that the theorem does not preclude the existence of further chambers along these paths. In many cases, for example for twisted cubics, there are different exact sequences defining identical walls in tilt stability because the defining objects only differ in the third Chern character. However, by definition, changes in $\ch_3$ cannot be detected via tilt stability. In Bridgeland stability these walls often move apart and give rise to further chambers.

The computations in tilt stability in this article are very similar in nature to many computations about stability of sheaves on surfaces in \cite{ABCH13, BM14, CHW14, LZ13, MM13, Nue14, Woo13, YY14}. Despite the tremendous success in the surface case, the threefold case has barely been explored. Beyond the issue of constructing Bridgeland stability condition there are further problems that have made progress difficult.

\subsection{Further Questions}

For tilt stability parametrized by the $(\alpha, \beta)$ upper half-plane, there is at most one vertical wall, while all other walls are nested inside two piles of non intersecting semicircles. This structure is rather simple. However, in the case of Bridgeland stability on threefolds walls are given by real degree 4 equation. Already in the case of twisted cubics we can observe that they intersect (see Figure \ref{fig:bridgeland_walls_twisted}).

\begin{question}
Given a path $\gamma$ in the stability manifold and a class $v \in K_{\num}(X)$, is there a numerical criterion that determines all the walls on $\gamma$ with respect to $v$? If not, can we at least numerically restrict the amount of potential walls on $\gamma$ in an effective way?
\end{question}

We are only able to answer this question for the two paths described in Theorem \ref{thm:wall_intersecting_hyperbola}. The general situation seems to be more intricate. If we want to study stability in any meaningful way beyond tilt stability, we need at least partial answers to this question.

Another problem is the construction of reasonably behaved moduli spaces of Bridgeland semistable objects. A recent result by Piyaratne and Toda is a major step towards this.

\begin{thm}[{\cite{PT15}}]
Let $X$ be a smooth projective threefold such that the conjectural construction of Bridgeland stability from \cite{BMT14} works. Then any moduli space of semistable objects for such a Bridgeland stability condition is a universally closed algebraic stack of finite type over $\C$.
\end{thm}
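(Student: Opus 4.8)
This statement is quoted from \cite{PT15}; here is the line of argument I would follow. Fix a class $v \in K_{\num}(X)$ and a Bridgeland stability condition $\sigma = (Z, \mathcal{A})$ coming from the \cite{BMT14} construction, so that $\mathcal{A} = \mathcal{A}_{\alpha,\beta,s}$ is the double tilt of $\Coh(X)$ recalled in Section \ref{sec:construction} and $Z$ satisfies the support property thanks to the (assumed) validity of the generalized Bogomolov--Gieseker inequality on $X$. The plan is to realize the moduli functor of $\sigma$-semistable objects of class $v$ as a substack of Lieblich's algebraic stack of complexes, to show that this substack is open and of finite type, and then to verify the existence part of the valuative criterion for properness (no separatedness is claimed, so only universal closedness is needed).

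For algebraicity I would start from Lieblich's stack $\mathfrak{M}$ of universally gluable, relatively perfect complexes on $X$, which is an algebraic stack locally of finite presentation over $\C$. First one shows that the condition ``every geometric fibre lies in the heart $\mathcal{A}$'' cuts out an open substack; this uses the explicit two-term description of the tilted hearts together with semicontinuity of the dimensions of the cohomology sheaves. Restricting to the fixed numerical class $v$ gives an algebraic substack $\mathcal{M}_v^{\mathcal{A}}$. The decisive technical input, on which everything downstream relies, is that $\mathcal{A}$ is a \emph{noetherian} abelian category. Since $\mathcal{A}$ is obtained from $\Coh(X)$ by two successive tilts at torsion pairs defined by slope functions, I would prove this by an inductive ``one tilt at a time'' argument: a tilt of a noetherian category at a torsion pair whose slope takes values in a discrete set and whose filtrations involve only classes constrained by the support property is again noetherian, since an infinite ascending chain would produce a sequence of classes in $K_{\num}(X)$ incompatible with that property.

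Granting noetherianity, boundedness of the set of $\sigma$-semistable objects of class $v$ follows by combining the two support properties (for $Z^{\tilt}$ and for $Z$) to pin the standard cohomology sheaves $\mathcal{H}^i(E)$ of such an $E$ into a bounded range of degrees and a finite set of Chern characters. Boundedness plus noetherianity give openness of the semistable locus inside $\mathcal{M}_v^{\mathcal{A}}$ --- potential destabilizing subobjects and quotients move in bounded families, and $\Hom$ and $\Ext^1$ are upper semicontinuous --- so the moduli stack $\mathcal{M}_v^{ss}$ is an open algebraic substack of finite type over $\C$. For universal closedness I would apply the valuative criterion: given a DVR $R$ with fraction field $K$ and a $\sigma$-semistable object $E_K$ of class $v$ over $\Spec K$, first spread it out to a relatively perfect complex over $\Spec R$ lying fibrewise in $\mathcal{A}$ (after a finite extension of $R$, using that $\mathfrak{M}$ itself satisfies the existence part of the valuative criterion), and then run a Langton-style modification: if the central fibre is unstable, replace the family by the kernel of its surjection onto the maximal destabilizing quotient of the central fibre. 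Noetherianity of $\mathcal{A}$ together with discreteness of the phase function force this procedure to terminate, yielding a flat extension whose central fibre is $\sigma$-semistable of class $v$.

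The main obstacle is the noetherianity of the doubly tilted heart $\mathcal{A}$. On surfaces a single tilt suffices and noetherianity of the resulting heart is comparatively routine; here the second tilt is performed at a torsion pair whose pieces are themselves only controlled through the subtle inequality on $\ch_3$, and making the inductive argument run through both tilts --- keeping the support property in play at each stage --- is exactly where the validity of the \cite{BMT14} conjecture for $X$ is consumed. Once this is established, the remaining steps (algebraicity, finite type, and the valuative criterion) are essentially adaptations of the surface case.
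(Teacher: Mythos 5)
The paper does not actually prove this statement: it is imported verbatim from \cite{PT15} as an external input (the only use made of it here is to conclude, in the proof of Theorem \ref{thm:first_wall_general}, that a moduli space without strictly semistable objects is a proper algebraic space). So there is no internal proof to measure your write-up against; what you have produced is a reconstruction of the Piyaratne--Toda argument, and in broad outline it does match their strategy: algebraicity via Lieblich's stack of gluable complexes together with openness of the heart and of (semi)stability, boundedness extracted from the support property and the \cite{BMT14} inequality by descending through the hierarchy slope/tilt/Bridgeland, and universal closedness by a Langton-type elementary modification over a DVR, all resting on noetherianity of the double-tilted heart.

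The genuine gap is precisely at the step you yourself call decisive. The assertion that ``a tilt of a noetherian category at a torsion pair whose slope takes values in a discrete set \ldots is again noetherian'' is not a proof, and as stated it does not even apply to the second tilt: the relevant slope there is $\nu_{\alpha,\beta}$, whose numerator $H\cdot\ch_2^{\beta} - \tfrac{\alpha^2}{2}H^3\cdot\ch_0^{\beta}$ does not take values in a discrete subset of $\R$ for general (irrational) $\alpha,\beta$, so the standard discreteness argument that works for the first tilt (where the imaginary part is controlled by $H^2\cdot\ch_1^{\beta}$, discrete after scaling when $\beta\in\Q$) breaks down. Moreover, tilting a noetherian heart at a torsion pair does not in general preserve noetherianity, so some input beyond ``noetherian + torsion pair'' is unavoidable; in \cite{PT15} this is handled by a considerably more delicate argument, and your appeal to ``classes incompatible with the support property'' in an infinite ascending chain is exactly the content that would need to be made precise. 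Until that is done, the downstream steps (termination of the Langton procedure, openness via bounded families of destabilizers) are conditional, since they all consume noetherianity and boundedness rather than establish them.
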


If there are no strictly semistable objects, the moduli space becomes a proper algebraic space of finite type over $\C$. For certain applications such as birational geometry, we would like our moduli spaces to be projective.

\begin{question}
Assume $\sigma \in \Stab(X)$ is a Bridgeland stability condition and $v \in K_{\num}(X)$. When is the moduli space of $\sigma$-stable objects with class $v$ quasi-projective?
\end{question}

\subsection{Organization of the Article}

In Section \ref{sec:veryweak} we recall the notion of a very weak stability condition from \cite{BMS14} and \cite{PT15}. All our examples of stability conditions fall under this notion. Section \ref{sec:construction} describes the construction of both tilt stability and Bridgeland stability, and establishes some basic properties. In particular, we remark which techniques for Bridgeland stability on surfaces work without issues in tilt stability. In Section \ref{sec:p3} we deal with stability of line bundles or powers of line bundles on $\P^3$ by connecting these questions to moduli of quiver representations. Section \ref{sec:tilt_examples} deals with computing specific examples in $\P^3$ for tilt stability. Moreover, we discuss how many of those calculations can be handled by computer calculations. In Section \ref{sec:connection} we prove our main comparison theorem between Bridgeland stability and tilt stability. Finally, in Section \ref{sec:exmaples_bridgeland} we use this connection to finish the computations necessary to establish the two main theorems. 

\subsection{Notation}
\begin{center}
  \begin{tabular}{ r l }
    $X$ & smooth projective threefold over $\C$ \\
    $H$ & fixed ample divisor on $X$ \\
    $\II_{Z/X}$, $\II_Z$ & ideal sheaf of a closed subscheme $Z \subset X$ \\
    $D^b(X)$ & bounded derived category of coherent \\ & sheaves on $X$ \\
    $\ch(E)$ & Chern character of an object $E \in D^b(X)$  \\
    $\ch_{\leq l}(E)$ & $(\ch_{0}(E), \ldots,
    \ch_{l}(E))$ \\
    $H \cdot \ch(E)$ & $(H^3 \cdot \ch_{0}(E), H^2 \cdot \ch_{1}(E), H \cdot \ch_{2}(E), \ch_{3}(E))$ \\ & for an ample divisor $H$ on $X$ \\
    $H \cdot \ch_{\leq l}(E)$ & $(H^3 \cdot
    \ch_{0}(E), \ldots, H^{3-l} \cdot \ch_{l}(E))$ \\ & for an ample divisor $H$ on $X$ \\
    $K_{\num}(X)$ & the numerical Grothendieck group of $X$ \\
    $\Re(z)$ & real part of a complex number $z$ \\
    $\Im(z)$ & imaginary part of a complex number $z$
  \end{tabular}
\end{center}

\subsection*{Acknowledgements}
I would like to thank David Anderson, Arend Bayer, Patricio Gallardo, C\'esar Lozano Huerta, and Emanuele Macr\`i for insightful discussions or comments on this article. I also thank the referee for carefully reading the article and making many useful suggestions. I especially thank my advisor Emanuele Macr\`i for carefully reading preliminary versions of this article. Most of this work was done at the Ohio State University whose mathematics department was extraordinarily accommodating after my advisor moved. In particular, Thomas Kerler and Roman Nitze helped me a lot with handling the situation. Lastly, I would like to thank Northeastern University at which the finals details of this work were finished for their hospitality. The research was partially supported by NSF grants DMS-1160466 and DMS-1523496 (PI Emanuele Macr\`i) and a presidential fellowship of the Ohio State University.


\section{Very Weak Stability Conditions and the Support Property}
\label{sec:veryweak}

All forms of stability occurring in this article are encompassed by the
notion of a very weak stability condition introduced in Appendix B
of \cite{BMS14}. It will allow us to treat different forms of stability
uniformly. We will recall this notion more closely to how it was
defined in \cite{PT15}.

\begin{defn}
\label{def:heart}
A \textit{heart of a bounded t-structure} on $D^b(X)$ is a
full additive subcategory $\AA \subset D^b(X)$ such that
\begin{itemize}
  \item for integers $i > j$ and $A \in \AA[i]$, $B \in \AA[j]$ the vanishing
  $\Hom(A,B) = 0$ holds,
  \item for all $E \in \DD$ there are integers $k_1 > \ldots > k_m$ and a
  collection of triangles
  $$\xymatrix{
  0=E_0 \ar[r] & E_1 \ar[r] \ar[d] & E_2 \ar[r] \ar[d] & \ldots \ar[r] & E_{m-1}
  \ar[r] \ar[d] & E_m = E \ar[d] \\
  & A_1[k_1] \ar@{-->}[lu] & A_2[k_2] \ar@{-->}[lu] & & A_{m-1}[k_{m-1}]
  \ar@{-->}[lu] & A_m[k_m] \ar@{-->}[lu] }$$
  where $A_i \in \AA$.
\end{itemize}
\end{defn}

The heart of a bounded t-structure is automatically abelian. A proof of this
fact and a full introduction to the theory of t-structures can be found in
\cite{BBD82}. The standard example of a heart of a bounded t-structure on
$D^b(X)$ is given by $\Coh(X)$. More generally, whenever there is an equivalence $D^b(X) \cong D^b(\AA)$ for some abelian category $\AA$, then $\AA$ is the heart of a bounded t-structure on $D^b(X)$. While the converse does not hold in general, this example is key for the intuition behind t-structures.

\begin{defn}[\cite{Bri07}]
A \textit{slicing} of $D^b(X)$ is a collection of full additive subcategories $P(\phi) \subset
D^b(X)$ for all $\phi \in \mathbb{R}$ such that
\begin{itemize}
  \item $P(\phi)[1] = P(\phi + 1)$,
  \item if $\phi_1 > \phi_2$ and $A \in P(\phi_1)$, $B \in P(\phi_2)$ then
  $\Hom(A,B) = 0$,
  \item for all non-zero $E \in D^b(X)$ there are $\phi_1 > \ldots > \phi_m$ and a
  collection of triangles
  $$\xymatrix{
  0=E_0 \ar[r] & E_1 \ar[r] \ar[d] & E_2 \ar[r] \ar[d] & \ldots \ar[r] & E_{m-1}
  \ar[r] \ar[d] & E_m = E \ar[d] \\
  & A_1 \ar@{-->}[lu] & A_2 \ar@{-->}[lu] & & A_{m-1} \ar@{-->}[lu] & A_m
  \ar@{-->}[lu] }$$
  where $A_i \in P(\phi_i)$.
\end{itemize}
For this filtration of a non-zero element $E \in D^b(X)$, we write $\phi^-(E) := \phi_m$
and $\phi^+(E) := \phi_1$. Moreover, for $E \in P(\phi)$ we call $\phi(E):=\phi$
the \textit{phase} of $E$.
\end{defn}

The last property is called the \textit{Harder-Narasimhan filtration}. By setting
$\mathcal{A} := P((0,1])$ to be the extension closure of the subcategories $\{P(\phi):\phi \in (0,1]\}$ one gets the heart of a bounded t-structure from a slicing. In both cases of a slicing and the heart of a bounded t-structure it is not particularly difficult to show that the Harder-Narasimhan filtration is unique.

Let $v: K_0(X) \to \Gamma$ be a homomorphism where $\Gamma$ is a finite rank lattice. Fix $H$ to be an ample divisor on $X$. Then $v$ will usually be one of the homomorphisms $H \cdot \ch_{\leq l}$ defined by
$$E \mapsto (H^n \cdot \ch_0(E), \ldots, H^{n-l} \cdot \ch_l(E))$$ 
for some $l \leq n$.

\begin{defn}[\cite{PT15}]
A \textit{very weak pre-stability condition} on $D^b(X)$ is a pair $\sigma
= (P,Z)$, where $P$ is a slicing of $D^b(X)$ and $Z: \Gamma \to \mathbb{C}$ is a
homomorphism such that any non zero $E \in P(\phi)$ satisfies
$$Z(v(E)) \in \begin{cases}
\R_{>0} e^{i \pi \phi} &\text{for } \phi \in \R \backslash \Z\\
\R_{\geq 0} e^{i \pi \phi} &\text{for } \phi \in \Z.
\end{cases}$$
\end{defn}

This definition is short and good for abstract argumentation, but it is not very practical for defining concrete examples. As before, the heart of a bounded t-structure can be defined by $\mathcal{A} := P((0,1])$. The usual way to define a very weak pre-stability condition is to instead define the heart of a bounded t-structure $\mathcal{A}$ and a central charge $Z: \Gamma \to \C$ such that $Z \circ v$ maps $\mathcal{A} \backslash \{ 0 \}$ to the upper half plane or the non positive real line $\{re^{i\pi\varphi} : r\geq0, \varphi \in (0,1]\}$. The subcategory $P(\phi)$ for $\phi \in (0,1]$ consists of all semistable objects $E$ such that
$$Z(v(E)) \in \begin{cases}
\R_{>0} e^{i \pi \phi} &\text{for } \phi \in \R \backslash \Z \\
\R_{\geq 0} e^{i \pi \phi} &\text{for } \phi \in \Z.
\end{cases}$$
More precisely, we can define a slope function by
$$\mu_{\sigma} := -\frac{\Re(Z)}{\Im(Z)},$$
where dividing by $0$ is interpreted as $+\infty$. Then an object $E \in \AA$ is
called \textit{(semi-)stable} if for all monomorphisms $A \into E$ in $\AA$ we
have $\mu_{\sigma}(A) < (\leq) \mu_{\sigma}(A/E)$. More generally, an element $E
\in D^b(X)$ is called (semi-)stable if there is $m \in \Z$ such that $E[m] \in \AA$ is
(semi-)stable. A semistable but not stable object is called \textit{strictly
semistable}. Moreover, one needs to show that Harder-Narasimhan filtrations
exist inside $\AA$ with respect to the slope function $\mu_{\sigma}$ to actually
get a very weak pre-stability condition. We interchangeably use $(\AA, Z)$ and $(P,Z)$
to denote the same very weak pre-stability condition.

An important tool is the support property. It was introduced in
\cite{KS08} for Bridgeland stability conditions, but can be adapted without much
trouble to very weak stability conditions (see \cite[Section 2]{PT15}). We also
recommend \cite[Appendix A]{BMS14} for a nicely written treatment of this
notion. Without loss of generality we can assume that if $E \in \AA$ and $Z(v(E)) = 0$, then
$v(E) = 0$. If not we replace $\Gamma$ by a suitable quotient.

\begin{defn}
A very weak pre-stability condition $\sigma = (\AA, Z)$ satisfies the
\textit{support property} if there is a bilinear form $Q$ on $\Gamma \otimes \R$
such that
\begin{enumerate}
  \item all semistable objects $E\in \AA$ satisfy the inequality $Q(v(E),v(E))
  \geq 0$ and
  \item all non zero vectors $v \in \Gamma \otimes \R$ with $Z(v) = 0$ satisfy $Q(v, v) < 
  0$.
\end{enumerate}
A very weak pre-stability condition satisfying the support property is called a
\textit{very weak stability condition}.
\end{defn}

By abuse of notation, we will write $Q(E,F)$ instead of $Q(v(E), v(F))$ for $E, F \in D^b(X)$. We will also use the notation $Q(E) = Q(E,E)$.

Let $\Stab^{vw}(X,v)$ be the set of very weak stability conditions on $X$ with
respect to $v$. This set can be given a topology as the coarsest topology such
that the maps $(\AA,Z) \mapsto Z$, $(\AA, Z) \mapsto \phi^+(E)$, and $(\AA, Z)
\mapsto \phi^-(E)$ for any $E \in D^b(X)$ are continuous.

\begin{lem}[{\cite{BMS14}[Section 8, Lemma A.7 \& Proposition A.8]}]
\label{lem:Qzero}
Assume that $Q$ has signature $(2, \rk \Gamma - 2)$ and $U$ is a path connected open subset of $\Stab^{vw}(X,v)$ such that all $\sigma \in U$ satisfy the support property with respect to $Q$.
\begin{itemize}
  \item  If $E \in D^b(X)$ with $Q(E) = 0$ is $\sigma$-stable for some $\sigma
  \in U$ then it is $\sigma'$-stable for all $\sigma' \in U$ unless it is
  destabilized by an object $F$ with $v(F)=0$.
  \item Let $\rho$ be a ray in $\C$ starting at the origin. Then
  $$\CC^+ = Z^{-1}(\rho) \cap \{ Q \geq 0 \}$$
  is a convex cone for any very weak stability condition $(\AA,Z) \in U$.
  \item Moreover, any vector $w \in \CC^+$ with $Q(w) = 0$ generates an extremal
  ray of $\CC^+$.
\end{itemize}
\end{lem}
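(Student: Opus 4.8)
The plan is to establish the three bullets in order, using the signature hypothesis on $Q$ and the support property to control how things can degenerate along the path. The key point throughout is that $Q \geq 0$ cuts out a cone on which $Z$ behaves well.

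First I would prove the convexity statement, since the other two rest on it. Fix $(\AA, Z) \in U$ and a ray $\rho \subset \C$ from the origin. The preimage $Z^{-1}(\rho)$ is a real-linear subspace (or half-space) of $\Gamma \otimes \R$ of codimension at least one: writing $\rho = \R_{\geq 0} e^{i\pi\phi}$, the condition $Z(w) \in \rho$ amounts to one linear equation $\Im(e^{-i\pi\phi} Z(w)) = 0$ together with one linear inequality $\Re(e^{-i\pi\phi} Z(w)) \geq 0$. On the linear subspace $L := \ker \Im(e^{-i\pi\phi}Z)$ the form $Q$ restricted to $L$ has at most one positive eigenvalue: indeed by the support property $(2)$, $Q$ is negative definite on $\ker Z \subset L$, which has codimension $1$ in $L$, and $Q$ globally has signature $(2, \rk\Gamma - 2)$, so $Q|_L$ has signature $(p, q)$ with $p \leq 1$. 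Hence $\{w \in L : Q(w) \geq 0\}$ is the ``light cone'' of a Lorentzian (or negative semidefinite) form, which is a union of two convex half-cones; intersecting with the half-space cut out by the inequality $\Re(e^{-i\pi\phi}Z) \geq 0$ picks out one of them (the one on which $Z$ lands in $\rho$ rather than $-\rho$), giving the convex cone $\CC^+$. The extremal-ray claim is then the standard fact that in a Lorentzian light cone the null vectors $\{Q(w) = 0\}$ are exactly the boundary rays, hence extremal; one only has to note that a ray of $\CC^+$ lying in the hyperplane $\{Q = 0\}$ cannot be written as a sum of two non-proportional elements of $\CC^+$ because $Q$ of such a sum would be strictly positive by the reverse Cauchy--Schwarz inequality for $Q|_L$.

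For the first bullet, suppose $E$ with $Q(E) = 0$ is $\sigma$-stable for some $\sigma \in U$ but fails to be $\sigma'$-stable for some $\sigma' \in U$, and is not destabilized by any $F$ with $v(F) = 0$. Using that $U$ is path connected and open with the uniform support property, I would walk from $\sigma$ toward $\sigma'$ and look at the first point $\sigma_0$ where stability is lost. At $\sigma_0$ there is a Jordan--H\"older--type destabilizing subobject, and by openness and the continuity of the $\phi^{\pm}$ maps the destabilizing sequence $0 \to A \to E \to B \to 0$ has all three terms $\sigma_0$-semistable of the same phase, with $v(A), v(B) \neq 0$ by the hypothesis. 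All of $v(A), v(B), v(E)$ then lie in the cone $\CC^+ = Z_{\sigma_0}^{-1}(\rho) \cap \{Q \geq 0\}$ for the common ray $\rho$, with $v(E) = v(A) + v(B)$. But $v(E)$ generates an extremal ray of $\CC^+$ by the third bullet (since $Q(E) = 0$), so $v(A)$ and $v(B)$ are non-negative multiples of $v(E)$; combined with the additivity of $Z$ and of $v$ this forces $v(A)$ and $v(B)$ to be genuine scalar multiples of $v(E)$ that sum to it. A numerical comparison of $Q$ and the support property inequality $Q(A), Q(B) \geq 0$ together with $Q(E) = 0$ then pins things down further — using $Q(A) + 2Q(A,B) + Q(B) = Q(E) = 0$ and Cauchy--Schwarz on $\CC^+$ one gets $Q(A) = Q(B) = 0$ and $A, B$ proportional — contradicting that $E$ was $\sigma$-stable (a stable object admits no proper subobject of the same phase with proportional class unless that subobject has class $0$ or equals $v(E)$). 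This is essentially the argument of \cite[Appendix A]{BMS14}, adapted to the very weak setting as in \cite{PT15}, and the only thing to check carefully is that ``$E$ not destabilized by $F$ with $v(F) = 0$'' is exactly what is needed to exclude the degenerate case where the destabilizing subobject has zero class.

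The main obstacle I anticipate is purely bookkeeping rather than conceptual: making the ``first wall'' argument in the first bullet rigorous in the very weak setting, where $Z$ may vanish on nonzero classes elsewhere in $\Gamma$ (though we have arranged $Z(v) = 0 \Rightarrow v = 0$ by passing to a quotient) and where strictly semistable objects are abundant, so one must track Harder--Narasimhan and Jordan--H\"older filtrations across the path and invoke the local finiteness / openness of semistability that the support property provides. The linear-algebra core — signature $(2, \rk\Gamma - 2)$ plus negative-definiteness of $Q$ on $\ker Z$ gives a Lorentzian form on $Z^{-1}(\text{line})$, whence convexity and extremality of null rays — is short and is where all the real content sits.
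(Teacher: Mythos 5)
First, note that the paper does not actually prove this lemma: it imports it from \cite{BMS14} (Lemma A.7 and Proposition A.8) and only remarks that the same arguments go through for very weak stability conditions, the one new feature being destabilizing objects $F$ with $v(F)=0$. So the comparison is really with that imported argument. Your treatment of the second and third bullets coincides with it and is fine: $Q$ restricted to the line $\ker\Im(e^{-i\pi\phi}Z)$ has at most one positive direction because it is negative definite on the codimension-one subspace $\ker Z$, so $\{Q\ge 0\}$ there is a Lorentzian double cone, the sign of $\Re(e^{-i\pi\phi}Z)$ separates the two nappes (it cannot vanish on a nonzero cone element since $\ker Z$ is negative definite), and the reverse Cauchy--Schwarz inequality gives both convexity and extremality of null rays.

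The genuine gap is in the last step of your first bullet. After you have produced, at the first point $\sigma_0$ where stability fails, a sequence $0\to A\to E\to B\to 0$ of $\sigma_0$-semistable objects of equal phase with $v(A),v(B)$ proportional to $v(E)$, you conclude a contradiction with ``$E$ was $\sigma$-stable,'' on the grounds that a stable object has no proper subobject of the same phase with proportional class. But $A\hookrightarrow E$ is a monomorphism in the heart at $\sigma_0$, not at $\sigma$; stability at $\sigma$ only controls subobjects in the $\sigma$-heart, so as written the contradiction does not follow, and this is more than bookkeeping --- it is the actual mechanism of the proof. The standard fix (and the argument in \cite{BMS14}) is to derive the contradiction at points near $\sigma_0$ on the stable side: choose $A$ to be a $\sigma_0$-\emph{stable} Jordan--H\"older factor; since stability is open, $A$ and $E$ are both stable at nearby $\sigma_t$ with $t<t_0$, and because $v(A)$ is a positive multiple of $v(E)$ their phases agree at \emph{every} stability condition; a nonzero map $\Hom(A,E)\neq 0$ between stable objects of the same phase forces $A\cong E$, hence $v(B)=0$, which is exactly the excluded case of destabilization by an object of class zero (and is outright impossible for genuine Bridgeland stability conditions, where $Z(B)=0$ cannot happen for $B\neq 0$). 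Supplying this step --- openness of stability for the factor $A$, equality of phases from proportionality of classes, and the Hom-vanishing between non-isomorphic stable objects of equal phase --- is what turns your sketch into a proof and also explains why the caveat in the statement takes precisely the form ``unless $E$ is destabilized by an object $F$ with $v(F)=0$.''
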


Only the situation of an actual stability condition is handled in \cite{BMS14}.
In that situation there are no objects $F$ in the heart with $v(F) = 0$.
However, exactly the same arguments go through in the case of a very weak
stability condition.

\begin{defn}
A \textit{numerical wall} inside $\Stab^{vw}(X,v)$ (or a subspace of it) with
respect to an element $w \in \Gamma$ is a proper non trivial solution set of an
equation $\mu_{\sigma}(w) = \mu_{\sigma}(u)$ for a vector $u \in \Gamma$.

A subset of a numerical wall is called an actual \textit{wall}, if for each point of the subset there is an exact sequence of semistable objects $0 \to F \to E \to G \to 0$ in $\AA$, where $v(E) = w$ and $\mu_{\sigma}(F) = \mu_{\sigma}(G)$ numerically defines the wall.
\end{defn}

Walls in the space of very weak stability conditions satisfy certain numerical restrictions with respect to $Q$.

\begin{lem}
\label{lem:discriminant_properties}
Let $\sigma = (\AA, Z)$ be a very weak stability condition satisfying the
support property with respect to $Q$ (it is actually enough for $Q$ to be
negative semi-definite on $\Ker Z$).
\begin{enumerate}
  \item Let $F,G \in \AA$ be semistable objects. If $\mu_{\sigma}(F) = \mu_{\sigma}(G)$, then $Q(F,G) \geq 0$.
  \item Assume there is an actual wall defined by an exact sequence $0 \to F \to
  E \to G \to 0$. Then $0 \leq Q(F) + Q(G) \leq Q(E)$.
\end{enumerate}
\begin{proof}
We start with the first statement. If $Z(F) = 0$ or $Z(G) = 0$, then $Q(F,G) = 0$. If not, there is $\lambda > 0$ such that $Z(F - \lambda G) = 0$. Therefore, we get
$$0 \geq Q(F - \lambda G) = Q(F) + \lambda^2 Q(G) - 2\lambda Q(F,G).$$
The inequalities  $Q(F) \geq 0$ and $Q(G) \geq 0$ lead to $Q(F,G) \geq 0$.
For the second statement we have
$$Q(E) = Q(F) + Q(G) + 2Q(F,G) \geq 0.$$
Since all four terms are positive, the claim follows.
\end{proof}
\end{lem}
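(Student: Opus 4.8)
The plan is to reduce both statements to elementary bilinear algebra, using only two features of the hypotheses: that $Q(E) \geq 0$ for every semistable object (the first half of the support property), and that $Q$ is negative semi-definite on $\Ker Z$.

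For part (1), I would first note that since $F$ and $G$ are semistable objects of $\AA$ their central charges $Z(F)$ and $Z(G)$ lie in the closed upper half plane $\{re^{i\pi\varphi} : r \geq 0,\ \varphi \in (0,1]\}$, so that the equality of slopes $\mu(F) = \mu(G)$ pins down the common phase and places $Z(F)$, $Z(G)$ on a single ray through the origin. After the harmless reduction replacing $\Gamma$ by a quotient so that $Z(v) = 0$ forces $v = 0$, the case $Z(F) = 0$ or $Z(G) = 0$ is immediate: then $v(F) = 0$ or $v(G) = 0$ and hence $Q(F,G) = 0$. Otherwise there is a genuine $\lambda > 0$ with $Z(F - \lambda G) = 0$, namely $\lambda = |Z(F)|/|Z(G)|$, so $v(F - \lambda G) \in \Ker Z$ and negative semi-definiteness gives
$$0 \geq Q(F - \lambda G) = Q(F) + \lambda^2 Q(G) - 2\lambda Q(F,G).$$
Since $Q(F) \geq 0$ and $Q(G) \geq 0$, this forces $2\lambda Q(F,G) \geq 0$, and dividing by $2\lambda$ yields $Q(F,G) \geq 0$.

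For part (2), the definition of an actual wall provides an exact sequence $0 \to F \to E \to G \to 0$ in $\AA$ with $F$, $E$, $G$ all semistable and $\mu(F) = \mu(G)$. Additivity of $v$ on short exact sequences gives $v(E) = v(F) + v(G)$, hence by bilinearity
$$Q(E) = Q(F) + Q(G) + 2Q(F,G).$$
By the support property $Q(F) \geq 0$ and $Q(G) \geq 0$, and by part (1) applied to the semistable objects $F$, $G$ of equal slope one gets $Q(F,G) \geq 0$. Combining these, $0 \leq Q(F) + Q(G) \leq Q(E)$, as claimed.

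The only point requiring care is the treatment of $\Ker Z$: one has to be sure that equality of the slope function really does force $Z(F)$ and $Z(G)$ to be collinear — which is exactly what the upper-half-plane normalization of semistable objects of $\AA$ provides — and that the vanishing-central-charge cases are disposed of via the standard reduction to $Z(v) = 0 \Rightarrow v = 0$. Beyond that, the argument is purely the expansion of $Q$ on a class lying in $\Ker Z$, so I expect no further obstacle.
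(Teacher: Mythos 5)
Your proof is correct and follows essentially the same route as the paper: dispose of the degenerate case via the reduction $Z(v)=0 \Rightarrow v(\cdot)=0$, find $\lambda > 0$ with $Z(F-\lambda G)=0$, expand $Q(F-\lambda G)\leq 0$ to get $Q(F,G)\geq 0$, and then conclude part (2) from $Q(E)=Q(F)+Q(G)+2Q(F,G)$ together with the support property. The only difference is that you spell out the collinearity of $Z(F)$, $Z(G)$ and the explicit value of $\lambda$, which the paper leaves implicit.
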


\begin{rmk}
\label{rmk:bigger_lattice}
Since $Q$ has to be only negative semi-definite on $\Ker Z$ for the lemma to apply, it is sometimes possible to define $Q$ on a bigger lattice than $\Gamma$. For example, we will define a very weak stability condition factoring through $v = H \cdot \ch_{\leq 2}$, but apply the lemma for $v = H \cdot \ch$, where everything is still well defined later on.
\end{rmk}

The most well known example of a very weak stability condition is slope stability. We will slightly generalize it for notational purposes. Let $H$
be a fixed ample divisor on $X$. Moreover, pick a real number $\beta$. Then the
twisted Chern character $\ch^{\beta}$ is defined to be $e^{-\beta H} \cdot \ch$. In
more detail, one has

\begin{align*}
\ch^{\beta}_0 &= \ch_0, \\
\ch^{\beta}_1 &= \ch_1 - \beta H \cdot \ch_0 ,\\
\ch^{\beta}_2 &= \ch_2 - \beta H \cdot \ch_1 + \frac{\beta^2}{2} H^2 \cdot \ch_0, \\
\ch^{\beta}_3 &= \ch_3 - \beta H \cdot \ch_2 + \frac{\beta^2}{2} H^2 \cdot \ch_1 -
\frac{\beta^3}{6} H^3 \cdot \ch_0.
\end{align*}

In this case $v = H \cdot \ch_{\leq 1}$. The central charge is
given by
$$Z^{sl}_{\beta}(r,c) = -(c - \beta r) + ir.$$
The heart of a bounded t-structure in this case is simply $\Coh(X)$. The existence of Harder-Narasimhan filtrations was first proven for curves in \cite{HN74}, but holds in general. Finally the support property is satisfied for $Q = 0$. We will denote the corresponding slope function by
$$\mu_{\beta} := \frac{H^2 \cdot \ch^{\beta}_1}{H^3 \cdot \ch^{\beta}_0} = \frac{H^2\cdot \ch_1}{H^3 \cdot \ch_0} - \beta.$$
Note that the modification by $\beta$ does not change stability itself but just shifts the value of the slope.


\section{Constructions and Basic Properties}
\label{sec:construction}

\subsection{Tilt Stability}

In \cite{BMT14} the notion of tilt stability was introduced as an
auxiliary notion in between classical slope stability and Bridgeland stability on threefolds. We will recall its construction, and prove a few properties. From now on let $\dim X = 3$.

The process of tilting is used to obtain a new heart of a bounded t-structure.
For more information on the general theory of tilting we refer to \cite{HRS96}.
A torsion pair is defined by
\begin{align*}
\TT_{\beta} &= \{E \in \Coh(X) : \text{any quotient $E \onto G$
satisfies $\mu_{\beta}(G) > 0$} \}, \\
\FF_{\beta} &=  \{E \in \Coh(X) : \text{any subsheaf $F \subset E$
satisfies $\mu_{\beta}(F) \leq 0$} \}.
\end{align*}
A new heart of a bounded t-structure is defined as the extension
closure $\Coh^{\beta}(X) = \langle \FF_{\beta}[1],
\TT_{\beta} \rangle$. In this case $v = H \cdot \ch_{\leq 2}$. Let
$\alpha > 0$ be a positive real number. The central charge is given by
$$Z^{\tilt}_{\alpha, \beta}(r,c,d) = -(d - \beta c +
\frac{\beta^2}{2} r) + \frac{\alpha^2}{2}r + i(c - \beta r).$$
The corresponding slope function is
$$\nu_{\alpha, \beta} = \frac{H \cdot \ch^{\beta}_2 - \frac{\alpha^2}{2} H^3
\cdot \ch^{\beta}_0}{H^2 \cdot \ch^{\beta}_1}.$$
Note that in regard to \cite{BMT14} this slope has been modified by switching
$\omega$ with $\sqrt{3} \omega$. We prefer this point of view for aesthetical
reasons because it will make the walls semicircles and not just ellipses. Every
object in $\Coh^{\beta}(X)$ has a Harder-Narasimhan filtration due to
\cite[Lemma 3.2.4]{BMT14}. The support property is directly linked to the
Bogomolov inequality. This inequality was first proven for
slope semistable sheaves in \cite{Bog78}. We define the bilinear form by
$Q^{\tilt}((r,c,d), (R,C,D)) = Cc - Rd - Dr$.

\begin{thm}[{Bogomolov Inequality for Tilt Stability,
\cite[Corollary 7.3.2]{BMT14}}]
Any $\nu_{\alpha, \beta}$-semistable object $E \in \Coh^{\beta}(X)$ satisfies
\begin{align*}
Q^{\tilt}(E) &= (H^2 \cdot \ch_1^{\beta}(E))^2 - 2(H^3 \cdot \ch_0^{\beta})(H \cdot \ch_2^{\beta}) \\
&= (H^2 \cdot \ch_1(E))^2 - 2(H^3 \cdot \ch_0)(H \cdot \ch_2) \geq 0.
\end{align*}
\end{thm}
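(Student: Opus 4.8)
The plan is to deduce the inequality from the classical Bogomolov inequality for slope semistable sheaves \cite{Bog78}, by first reducing to large $\alpha$ and then propagating the estimate downwards by a wall-crossing induction.

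\emph{Preliminary reductions.} The two displayed formulas for $Q^{\tilt}(E)$ agree because $(H^2\cdot\ch_1)^2-2(H^3\cdot\ch_0)(H\cdot\ch_2)$ is unchanged under the substitution $\ch\mapsto\ch^{\beta}=e^{-\beta H}\ch$ (a direct check with the formulas for $\ch^{\beta}_i$), and it is also unchanged under the shift $E\mapsto E[1]$. It is enough to treat $\nu_{\alpha,\beta}$-stable $E$, since a semistable object is a finite extension of stable objects of the same slope and nonnegativity of $Q^{\tilt}$ is inherited by such extensions exactly as in Lemma~\ref{lem:discriminant_properties}(2). Two cases are immediate and will be the bases of the induction: if $H^3\cdot\ch_0(E)=0$ then $Q^{\tilt}(E)=(H^2\cdot\ch_1^{\beta}(E))^2\geq0$; and if $H^2\cdot\ch_1^{\beta}(E)=0$, then by the structure of $\Coh^{\beta}(X)$ the object $E$ is a shift of a slope semistable torsion-free sheaf with $\mu_{\beta}=0$ or a shift of a sheaf supported in dimension $\leq1$, and in both cases the inequality reduces to \cite{Bog78} (for dimension $\leq1$ sheaves it is automatic).

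\emph{Large $\alpha$.} Fix $v=v(E)$ and $\beta$. Using the defining triangle $H^{-1}(E)[1]\to E\to H^0(E)$ in $\Coh^{\beta}(X)$ and comparing $\nu_{\alpha,\beta}$ of the outer terms as $\alpha\to\infty$ (the subobject has negative rank, so its slope tends to $+\infty$, while $\nu_{\alpha,\beta}(H^0(E))$ tends to $-\infty$ when $H^0(E)$ has positive rank and is $\alpha$-independent otherwise), one sees that for $\alpha\gg0$ a $\nu_{\alpha,\beta}$-semistable object with invariant $v$ is either a sheaf in $\TT_{\beta}$ or the shift of a sheaf in $\FF_{\beta}$; comparing a slope-destabilizing subsheaf with the $\Coh^{\beta}(X)$-subobject it induces then shows this sheaf must be slope semistable. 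Hence $Q^{\tilt}(E)\geq0$ by \cite{Bog78} and the first paragraph. Consequently every $\nu_{\alpha_0,\beta}$-semistable $E$ is semistable up to some first value $\alpha_1\in(\alpha_0,\infty]$, and the case $\alpha_1=\infty$ is done.

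\emph{Wall crossing and the main obstacle.} Suppose $\alpha_1<\infty$. Then at $\alpha_1$ there is a short exact sequence $0\to F\to E\to G\to0$ in $\Coh^{\beta}(X)$ with $\nu_{\alpha_1,\beta}(F)=\nu_{\alpha_1,\beta}(E)=\nu_{\alpha_1,\beta}(G)$ and with $F,G$ both $\nu_{\alpha,\beta}$-semistable just above $\alpha_1$. Since $Z^{\tilt}_{\alpha_1,\beta}(F)$ and $Z^{\tilt}_{\alpha_1,\beta}(G)$ lie on one ray from the origin, $v(F)-\lambda v(G)\in\ker Z^{\tilt}_{\alpha_1,\beta}$ for some $\lambda>0$, and a one-line computation shows $Q^{\tilt}$ is negative definite on $\ker Z^{\tilt}_{\alpha_1,\beta}$, so
\[
Q^{\tilt}(F)+\lambda^2 Q^{\tilt}(G)-2\lambda Q^{\tilt}(F,G)\leq0 .
\]
Granting $Q^{\tilt}(F)\geq0$ and $Q^{\tilt}(G)\geq0$, this forces $Q^{\tilt}(F,G)\geq0$ and hence $Q^{\tilt}(E)=Q^{\tilt}(F)+Q^{\tilt}(G)+2Q^{\tilt}(F,G)\geq0$. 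To get $Q^{\tilt}(F),Q^{\tilt}(G)\geq0$ one applies the theorem inductively to $F$ and $G$: at a wall of a finite-slope object all pieces are again finite-slope, so $H^2\cdot\ch_1^{\beta}(F)$ and $H^2\cdot\ch_1^{\beta}(G)$ are nonnegative, add up to $H^2\cdot\ch_1^{\beta}(E)$, and are both strictly smaller, so an induction on this additive invariant closes the loop. The real work — and what I expect to be the main obstacle — is exactly this structural input: showing that the destabilizing sequence at the first wall exists, that numerical walls in the $(\alpha,\beta)$-plane are locally finite (they are nested semicircles), and that the ranks and discriminants of subobjects of a fixed $E$ are bounded, so that the induction is well-founded and not circular with the support property one is trying to establish. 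This is the content of the wall analysis of \cite[Section~7]{BMT14}.
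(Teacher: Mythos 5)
The paper does not prove this statement: it is imported verbatim as \cite[Corollary 7.3.2]{BMT14}, and $Q^{\tilt}$ is only used downstream (via Lemmas \ref{lem:Qzero} and \ref{lem:discriminant_properties}) as a black box. Your sketch is essentially the proof from \cite[Section 7]{BMT14} (see also \cite[Appendix A]{BMS14}): reduce to the large volume limit, invoke the classical Bogomolov inequality \cite{Bog78} there, and propagate downwards across walls using that $Q^{\tilt}$ is negative definite on the one-dimensional kernel of $Z^{\tilt}_{\alpha,\beta}$ restricted to $(H^3\cdot\ch_0, H^2\cdot\ch_1, H\cdot\ch_2)$; your ``one-line computation'' is $Q^{\tilt}\bigl(1,\beta,\tfrac{\alpha^2+\beta^2}{2}\bigr)=-\alpha^2<0$, and the wall-crossing step is exactly the mechanism of Lemma \ref{lem:discriminant_properties}. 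So there is nothing on the paper's side to compare with; the only question is whether your outline is complete.

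Two places are looser than they should be. First, the large-$\alpha$ dichotomy omits the third case of Lemma \ref{lem:large_volume_limit_tilt}: a $\nu_{\alpha,\beta}$-semistable object for $\alpha\gg0$ may have $\HH^{-1}(E)$ a nonzero torsion-free slope semistable sheaf \emph{and} $\HH^{0}(E)$ a nonzero sheaf supported in dimension $\leq 1$. The inequality still holds there, but not by quoting \cite{Bog78} alone; one needs the extra observation $Q^{\tilt}(E)=Q^{\tilt}(\HH^{-1}(E))+2\,(H^3\cdot\ch_0(\HH^{-1}(E)))\,(H\cdot\ch_2(\HH^{0}(E)))\geq 0$, both extra factors being nonnegative. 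Second, the induction on the additive invariant $H^2\cdot\ch_1^{\beta}$ is well-founded only when its values are discrete, i.e. for $\beta\in\Q$; for irrational $\beta$ one must either deform to rational $\beta$ (using openness of stability, local finiteness of walls, and the fact that $Q^{\tilt}$ does not depend on $(\alpha,\beta)$) or reorganize the induction. You explicitly flag this, together with the existence of the destabilizing sequence at the first wall $\alpha_1$, as deferred to \cite{BMT14} --- and that is indeed where the real content of the theorem lives. As written, your argument is an accurate road map of the cited proof rather than a self-contained one, which is a fair match for a statement the paper itself only quotes.
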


As a consequence $(\Coh^{\beta}, Z^{\tilt}_{\alpha, \beta})$
satisfies the support property with respect to $Q^{\tilt}$. On smooth
projective surfaces this is already enough to get a Bridgeland stability condition (see
\cite{Bri08, AB13}). On threefolds this notion is not able to properly handle
geometry that occurs in codimension three as we will see.

\begin{prop}[{\cite[Appendix B]{BMS14}}]
\label{prop:tilt_continuous}
The function $\R_{>0} \times \R \to \Stab^{vw}(X,v)$ defined by $(\alpha, \beta) \mapsto (\Coh^{\beta}(X), Z^{\tilt}_{\alpha, \beta})$ is continuous. Moreover, walls with respect to a class $w \in \Gamma$ in the image of this map are locally finite.
\end{prop}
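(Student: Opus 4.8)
The plan is to deduce both statements from the general theory of very weak stability conditions recalled in Section~\ref{sec:veryweak}, using the Bogomolov inequality for tilt stability as the source of the support property. For \emph{continuity}, I would first exhibit the map $(\alpha,\beta)\mapsto(\Coh^\beta(X),Z^{\tilt}_{\alpha,\beta})$ as a composition of an honest tilt construction with a continuous family of central charges. The two defining data vary as follows: the heart $\Coh^\beta(X)$ only jumps at those $\beta$ where a slope-semistable sheaf has $\mu_\beta=0$, i.e. at rational $\beta$ coming from the discrete set of Chern characters of destabilizing subsheaves, but the slicing $P_{\alpha,\beta}$ obtained after the tilt is continuous in $(\alpha,\beta)$ because tilting at a torsion pair defined by a stability function depends continuously on that function (this is exactly the content of \cite[Appendix~B]{BMS14}); and the coefficients of $Z^{\tilt}_{\alpha,\beta}(r,c,d) = -(d-\beta c+\tfrac{\beta^2}{2}r)+\tfrac{\alpha^2}{2}r+i(c-\beta r)$ are manifestly real-analytic in $(\alpha,\beta)$ with $\alpha>0$. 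Since the topology on $\Stab^{vw}(X,v)$ is the coarsest one making $(\AA,Z)\mapsto Z$ and $(\AA,Z)\mapsto\phi^\pm(E)$ continuous, it suffices to check continuity of $\alpha\mapsto Z^{\tilt}_{\alpha,\beta}$ (clear) and of $\phi^\pm(E)$ along the family, and the latter follows once one knows the slicing varies continuously, which is the main technical input I would import from \cite{BMS14} rather than reprove.

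For \emph{local finiteness of walls}, fix a class $w=(w_0,w_1,w_2)\in\Gamma$. A numerical wall for $w$ is cut out by an equation $\nu_{\alpha,\beta}(w)=\nu_{\alpha,\beta}(u)$ for some $u\in\Gamma$; writing this out with the explicit formula for $\nu_{\alpha,\beta}$ one sees each such wall is a curve in the $(\alpha,\beta)$-upper half plane, and by the standard surface-type computation (clearing denominators) it is in fact a semicircle with center on the $\beta$-axis (or a vertical line). The key point is that only finitely many of these can actually be \emph{walls} in any quasi-compact region: for an actual wall there is a short exact sequence $0\to F\to E\to G\to 0$ of $\nu_{\alpha,\beta}$-semistable objects with $v(E)=w$, and Lemma~\ref{lem:discriminant_properties}(2) forces $0\le Q^{\tilt}(F)+Q^{\tilt}(G)\le Q^{\tilt}(E)=\Delta(w)$, a fixed nonnegative integer. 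Combined with the Bogomolov inequalities $Q^{\tilt}(F)\ge 0$, $Q^{\tilt}(G)\ge 0$ and the fact that $Q^{\tilt}$ takes values in $\tfrac12\Z$ on $\Gamma$ (integer coefficients of $H\cdot\ch_{\le 2}$), this bounds the discriminants of $F$ and $G$; together with $H^2\cdot\ch_1^\beta(F)$ and $H^2\cdot\ch_1^\beta(G)$ being nonnegative and summing to $H^2\cdot\ch_1^\beta(E)$ (bounded on any compact $\beta$-range), one gets only finitely many possible numerical types $(H\cdot\ch_{\le 2}(F))$ contributing walls that meet a given compact subset of the $(\alpha,\beta)$-plane. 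Each numerical type contributes at most one numerical wall, so the walls are locally finite.

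The step I expect to be the main obstacle is making the local finiteness argument genuinely \emph{local} rather than relying on a global bound: the inequality $Q^{\tilt}(F)+Q^{\tilt}(G)\le\Delta(w)$ bounds discriminants but not the ranks $w_0(F)$ themselves, so one must use the geometry of the $(\alpha,\beta)$-plane — specifically that a wall for a subobject $F$ is a semicircle whose two intersection points with the $\beta$-axis are $\beta=H^2\cdot\ch_1^\beta(F)/(H^3\cdot\ch_0^\beta(F))$-type quantities and that any compact region only meets semicircles whose left and right endpoints lie in a bounded interval — to conclude finiteness. Concretely I would fix a compact $K\subset\R_{>0}\times\R$, note every semicircular wall meeting $K$ has center in a bounded interval and radius bounded above and below away from $0$, and then use the discriminant bound to see that for each such center/radius window only finitely many $v(F)$ with bounded $Q^{\tilt}(F)$ and bounded $\mu_\beta(F)$-image can occur — this is the part where one must be careful, and it is exactly where the signature $(2,\rk\Gamma-2)$ hypothesis of Lemma~\ref{lem:Qzero} and the structure of $Q^{\tilt}$ do the work. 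Everything else is bookkeeping with the explicit central charge.
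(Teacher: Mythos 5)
The paper does not actually reprove this proposition: both halves are imported from \cite[Appendix B]{BMS14}, so deferring the continuity of the slicing to that appendix is exactly what the paper does and is unobjectionable (incidentally, the heart $\Coh^{\beta}(X)$ changes on a dense set of $\beta$, not a discrete one, but since you cite rather than reprove this point it does no harm). The problem is your treatment of local finiteness, and it is the very step you flag yourself: the bounds $Q^{\tilt}(F)\geq 0$, $Q^{\tilt}(G)\geq 0$, $Q^{\tilt}(F)+Q^{\tilt}(G)\leq Q^{\tilt}(w)$ together with $0\leq H^2\cdot\ch_1^{\beta}(F)\leq H^2\cdot\ch_1^{\beta}(w)$ do \emph{not} cut the candidate classes down to a finite set, because they do not bound the rank: for fixed $(\alpha,\beta)$ there are infinitely many lattice classes $(r,c,d)$ with $0\leq c^2-2rd\leq Q^{\tilt}(w)$ and $0\leq c-\beta r\leq H^2\cdot\ch_1^{\beta}(w)$ (let $r\to\infty$ and take $d$ close to $c^2/2r$). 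Your proposed repair does not close this: semicircular walls through a fixed compact set do \emph{not} have bounded radius (the center can run off to infinity), and Lemma \ref{lem:Qzero} concerns extremality and stability of $Q=0$ classes, not finiteness of walls.

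What actually closes the gap is the standard support-property argument. At a point of an actual wall inside a compact $K\subset\R_{>0}\times\R$ (so $\alpha\geq\alpha_0>0$), the destabilizing subobject satisfies not only $0<\Im Z^{\tilt}_{\alpha,\beta}(F)<\Im Z^{\tilt}_{\alpha,\beta}(w)$ but also, because $\nu_{\alpha,\beta}(F)=\nu_{\alpha,\beta}(w)$ and $\nu_{\alpha,\beta}(w)$ is bounded on $K$ away from the unique vertical wall (which is itself a single numerical wall and, by the structure theorem, meets no other), a bound on $|\Re Z^{\tilt}_{\alpha,\beta}(F)|$. Since $Q^{\tilt}$ is negative definite on $\Ker Z^{\tilt}_{\alpha,\beta}$ (on that kernel one computes $Q^{\tilt}=-\alpha^2 r^2$), the region $\{Q^{\tilt}\geq 0\}\cap\{|Z^{\tilt}_{\alpha,\beta}|\leq C\}$ is bounded in $\Gamma\otimes\R$, uniformly over $(\alpha,\beta)\in K$, hence contains only finitely many lattice classes, and each class cuts out a single numerical wall. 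This is precisely the mechanism in \cite[Appendix B]{BMS14}; alternatively compare the paper's own Lemma \ref{lem:rational_beta}, where the analogous bookkeeping is done along a fixed rational vertical line, with the degenerate cases $r=0$ and $d=0$ excluded by hand rather than by the kernel computation. So either carry out this $\Re Z$ plus kernel-definiteness argument explicitly, or simply cite \cite[Appendix B]{BMS14} for local finiteness as the paper does.
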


Numerical walls in tilt stability satisfy Bertram's Nested Wall Theorem. For surfaces it was proven in \cite{MacA14}.

\begin{thm}[Structure Theorem for Walls in Tilt Stability]
\label{thm:structure_theorem}
Fix a vector $(R,C,D) \in \Z^2 \times 1/2 \Z$. All numerical walls in
the following statements are with respect to $(R,C,D)$.
\begin{enumerate}
  \item Numerical walls in tilt stability are of the form 
  $$x\alpha^2 + x\beta^2 + y\beta + z = 0$$
  for $x = Rc - Cr$, $y = 2(Dr - Rd)$ and $z = 2(Cd - Dc)$.
  In particular, they are either semicircles with center on the $\beta$-axis
  or vertical rays.
  \item If two numerical walls given by $\nu_{\alpha, \beta}(r,c,d) =
  \nu_{\alpha, \beta}(R,C,D)$ and $\nu_{\alpha, \beta}(r',c',d') = \nu_{\alpha,
  \beta}(R,C,D)$ intersect for any $\alpha \geq 0$ and $\beta \in \R$ then $(r,c,d)$, $(r',c',d')$
  and $(R,C,D)$ are linearly dependent. In particular, the two walls are
  completely identical.
  \item The curve $\nu_{\alpha, \beta}(R,C,D) = 0$ is given by the hyperbola
  $$R\alpha^2 - R\beta^2 + 2C\beta - 2D = 0.$$
  Moreover, this hyperbola intersect all semicircles at their top point.
  \item If $R \neq 0$, there is exactly one vertical numerical wall given by
  $\beta = C/R$. If $R = 0$, there is no vertical wall.
  \item If a numerical wall has a single point at which it is an actual wall,
  then all of it is an actual wall.
\end{enumerate}
\begin{proof}
Part (1) and (3) are straightforward but lengthy computations only relying on
the numerical data.

A wall can also be described as two vectors mapping to the same
line under the homomorphism $Z^{\tilt}_{\alpha, \beta}$. This homomorphism
maps surjectively onto $\C$. Therefore, at most two linearly independent vectors
can be mapped onto the same line. That proves (2).

In order to prove (4), observe that a vertical wall occurs when $x = 0$ holds.
By the above formula for $x$ this implies
$$c = \frac{Cr}{R}$$
in case $R \neq 0$. A direct computation shows that the equation simplifies to
$\beta = C/R$. If $R = 0$ and $C \neq 0$, then $r = 0$. This implies that the
two slopes are the same for all or no $(\alpha, \beta)$. If $R = C
= 0$, then all objects with this Chern character are automatically semistable
and there are no walls at all.

Let $0 \to F \to E \to G \to 0$ be an exact sequence of tilt semistable objects
in $\Coh^{\beta}(X)$ that defines an actual wall. If there is a point on the
numerical wall at which this sequence does not define a wall anymore, then there are two possibilities. If either $F$, $E$, or $G$ destabilize at a point along the numerical wall, then that would mean two of its numerical walls intersect in contradiction to (2). The other possibility is that one of the objects $F$, $E$, or $G$ leave the category $\Coh^{\beta}(X)$. As long as this object stays semistable this can only happen along its numerical vertical wall. Again two numerical walls intersect in contradiction to (2).
\end{proof}
\end{thm}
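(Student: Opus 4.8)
The plan is to split the five assertions into the purely numerical ones (1), (3), (4), which all come from one computation with the central charge, and the structural ones (2), (5), which rest on the surjectivity of $Z^{\tilt}_{\alpha,\beta}$ and on the local finiteness of walls. For (1) I would write $\Re Z^{\tilt}_{\alpha,\beta}(r,c,d) = -d + \beta c - \tfrac{\beta^2}{2}r + \tfrac{\alpha^2}{2}r$ and $\Im Z^{\tilt}_{\alpha,\beta}(r,c,d) = c - \beta r$, clear denominators in $\nu_{\alpha,\beta}(r,c,d) = \nu_{\alpha,\beta}(R,C,D)$ to get $\Re Z(r,c,d)\,\Im Z(R,C,D) = \Re Z(R,C,D)\,\Im Z(r,c,d)$, and expand; one checks that the $\beta^3$ and $\alpha^2\beta$ terms cancel, that the coefficients of $\alpha^2$ and of $\beta^2$ both equal $\tfrac12(rC - cR)$, and that after clearing the common factor they become $x = Rc - Cr$, $y = 2(Dr - Rd)$, $z = 2(Cd - Dc)$, so the wall is $x(\alpha^2+\beta^2)+y\beta+z = 0$, a circle centred on the $\beta$-axis when $x \ne 0$ and a vertical line when $x = 0$. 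For (3), $\nu_{\alpha,\beta}(R,C,D) = 0$ is exactly $\Re Z(R,C,D) = 0$, which rearranges to $R\alpha^2 - R\beta^2 + 2C\beta - 2D = 0$; to see it passes through the apex of a semicircular wall I would differentiate $x(\alpha^2+\beta^2)+y\beta+z = 0$ in $\beta$ to locate the apex at $\beta = -y/(2x)$, substitute $\alpha^2 = -\beta^2 - (y\beta+z)/x$ and this value of $\beta$ into $R\alpha^2 - R\beta^2 + 2C\beta - 2D$, obtain $-(Rz + Cy + 2Dx)/x$, and note that $Rz + Cy + 2Dx = 0$ is an identity in $r,c,d,R,C,D$.

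For (2) I would use that $Z^{\tilt}_{\alpha,\beta}\colon \Gamma \otimes \R \to \C$ is surjective while $\Gamma$ has rank $3$, so the preimage of any line through the origin is a $2$-dimensional subspace; if the numerical walls $\nu_{\alpha,\beta}(r,c,d) = \nu_{\alpha,\beta}(R,C,D)$ and $\nu_{\alpha,\beta}(r',c',d') = \nu_{\alpha,\beta}(R,C,D)$ share a point with $\alpha \ge 0$, then the three vectors lie in a common such preimage, hence are linearly dependent. Since $x,y,z$ in (1) are linear in the first vector for fixed $(R,C,D)$, a relation $(r',c',d') = a(R,C,D) + b(r,c,d)$ multiplies the whole wall equation by $b$, so the two walls coincide. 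For (4), a wall is vertical precisely when the $\alpha^2$-coefficient $x = Rc - Cr$ vanishes; if $R \ne 0$ this gives $c = Cr/R$, and feeding $x = 0$ into $Rz + Cy + 2Dx = 0$ yields the line $\beta = -z/y = C/R$ (the degenerate subcase $y = z = 0$ occurs only when $(r,c,d) \parallel (R,C,D)$ and is not a proper wall); if $R = 0$ then $x = 0$ forces $r = 0$ or $C = 0$, and in either case a direct comparison shows the two slopes are equal for all $(\alpha,\beta)$ or for none, with $R = C = 0$ handled similarly, so no proper vertical wall arises.

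Part (5) is the structural core, and I expect the wall-crossing step inside it to be the main obstacle. Suppose an exact sequence $0 \to F \to E \to G \to 0$ of tilt semistable objects with $v(E) = w$ realises the numerical wall $W$ (so $\mu_\sigma(F) = \mu_\sigma(G)$ cuts out $W$) at one point $\sigma_0 \in W$ but not at another point $\sigma_1 \in W$. Then at $\sigma_1$ one of $F$, $E$, $G$ is unstable, and following the wall-crossing picture along $W$ while using local finiteness of walls (Proposition \ref{prop:tilt_continuous}), there is a point $\sigma_2 \in W$ between $\sigma_0$ and $\sigma_1$ at which that object, say $F$, is strictly semistable via a subobject $F'$ with $\mu_{\sigma_2}(F') = \mu_{\sigma_2}(F)$; thus $\sigma_2$ lies on the numerical wall $W'$ defined by $\mu(F') = \mu(F)$, which is a numerical wall with respect to the class $v(F)$. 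But $W$ is also a numerical wall with respect to $v(F)$, since $\mu(F) = \mu(G)$ on $W$ and $v(F), v(G)$ are linearly independent (otherwise $v(F) \parallel w$ and $W$ is not a wall). So $W$ and $W'$ are numerical walls for $v(F)$ meeting at $\sigma_2$, hence identical by (2), and therefore $F$ is destabilised along all of $W$, contradicting its semistability at $\sigma_0$. The same argument with $E$ (reference class $w$) or $G$ in place of $F$ settles the remaining cases, so the sequence defines an actual wall at every point of $W$. The delicate point is the production of $\sigma_2$: along a semicircular $W$ the twist $\beta$, and with it the heart $\Coh^\beta(X)$, varies, so one cannot simply say that subobjects persist; instead local finiteness of walls and the openness of the unstable locus must be invoked to pin down where the destabilisation occurs.
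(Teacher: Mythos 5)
Your proposal is correct and follows essentially the same route as the paper: parts (1), (3), (4) by the same direct computations (the paper merely omits them), part (2) by the surjectivity of $Z^{\tilt}_{\alpha,\beta}$ onto $\C$, and part (5) by producing a destabilization point $\sigma_2$ on the numerical wall and invoking (2), exactly as in the paper's proof. One small wording slip in (5): once (2) gives $W'=W$, the contradiction is not that $F$ is ``destabilised along all of $W$'' (equal slopes do not destabilize), but that $F'$ strictly destabilizes $F$ at points of $W$ just beyond $\sigma_2$, where $W'=W$ forces $\mu(F')=\mu(F)$ --- equivalently, $W'\neq W$ yet the two numerical walls meet at $\sigma_2$, contradicting (2).
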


A generalized Bogomolov inequality involving third Chern characters for
tilt semistable objects with $\nu_{\alpha, \beta} = 0$ has been conjectured in
\cite{BMT14}. In \cite{BMS14} it was shown that the conjecture is equivalent to
the following more general inequality that drops the hypothesis $\nu_{\alpha, \beta} = 0$.

\begin{conj}[BMT Inequality]
Any $\nu_{\alpha, \beta}$-semistable object $E \in \Coh^{\beta}(X)$ satisfies
$$\alpha^2 Q^{\tilt}(E) + 4(H \cdot \ch_2^{\beta}(E))^2 - 6(H^2 \cdot \ch_1^{\beta}) \ch_3^{\beta} \geq 0.$$
\end{conj}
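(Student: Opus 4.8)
\emph{Sketch of an approach for $X=\P^3$, following \cite{MacE14}.} The BMT inequality is open for general threefolds, but for $\P^3$ --- the case relevant to this article --- it is a theorem of Macr\`i \cite{MacE14}, whose strategy I outline. The first step is to reduce to tilt-semistable objects $E$ with $\nu_{\alpha,\beta}(E)=0$: by the equivalence recalled above from \cite{BMS14} it suffices to treat these, and for them the relation $H\cdot\ch_2^{\beta}(E)=\frac{\alpha^2}{2}H^3\cdot\ch_0^{\beta}(E)$ forces $H^2\cdot\ch_1^{\beta}(E)>0$ and reduces the statement to the single bound
\[
\ch_3^{\beta}(E)\leq\frac{\alpha^2}{6}\,H^2\cdot\ch_1^{\beta}(E).
\]
Since tensoring by $\OO_{\P^3}(H)$ carries $\Coh^{\beta}(X)$ to $\Coh^{\beta+1}(X)$ compatibly with the central charges, I may normalize $\beta$; then, using the local finiteness of walls (Proposition \ref{prop:tilt_continuous}) and the Structure Theorem for walls in tilt stability, I would move $(\alpha,\beta)$ along the curve $\nu_{\alpha,\beta}(\ch(E))=0$ to an extremal position --- either a value of $\beta$ where $E$ becomes strictly semistable, handled by induction on the Chern character over the Jordan--H\"older factors, or the boundary of $\Coh^{\beta}(X)$, where the cohomology sheaves of $E$ are under control.

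The key $\P^3$-specific input is the exceptional collection $\OO_{\P^3},\OO_{\P^3}(1),\OO_{\P^3}(2),\OO_{\P^3}(3)$. One shows that, after the reductions above, $E$ lies --- up to shift and twist by a line bundle --- in the heart $\AA$ generated by a suitable rotation of this collection, and that $\AA$ is equivalent to the category of finite-dimensional modules over a finite-dimensional algebra $A$ of finite global dimension (a Beilinson-type quiver algebra). Under this equivalence $\ch(E)$ is determined by the dimension vector of the corresponding $A$-module; membership of $E$ in $\AA$, rather than merely in $D^b(\P^3)$, amounts to Ext-vanishing in all nonzero degrees between $E$ and the line bundles of the rotated collection; and this, together with tilt-semistability of $E$, bounds the admissible dimension vectors. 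Since $\ch_3^{\beta}(E)$ and $H^2\cdot\ch_1^{\beta}(E)$ are explicit quadratic and linear functions of the dimension vector, the desired inequality reduces to a finite, if lengthy, optimization.

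The main obstacle is precisely this passage to the quiver heart: one must verify that every $\nu_{\alpha,\beta}$-semistable object produced by the reductions genuinely sits in $\AA$, and --- more delicately --- that the homological constraints coming from membership in $\AA$ are sharp enough to yield the stated bound rather than a weaker one. A further subtlety is that the BMT quantity is a non-additive, degree-four function of the Chern character, so the induction over Jordan--H\"older factors must be arranged so that the estimate is not lost under extensions; here the support property and the sign restrictions of Lemma \ref{lem:discriminant_properties} for $Q^{\tilt}$ serve to keep the possible factors under control. For a general threefold there is no known analogue of the $\P^3$ exceptional collection, which is why the inequality remains conjectural beyond a few cases; a more intrinsic line of attack --- assume a counterexample, extract a numerical class on which the inequality fails by the largest possible amount, and derive a contradiction by wall-crossing --- has been pushed through for certain Fano threefolds but appears hard to run in general.
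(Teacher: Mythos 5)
The statement you were asked to prove is stated in the paper as a \emph{conjecture}: the paper offers no proof of it, only the remark that it is known for $\P^3$ \cite{MacE14}, for the smooth quadric threefold \cite{Sch14} and for abelian threefolds \cite{BMS14, MP13a, MP13b}. So there is no in-paper argument to compare yours against, and your proposal correctly recognizes the conjectural status and restricts itself to sketching Macr\`i's strategy for $X=\P^3$. But as a proof of the statement as written (for an arbitrary smooth projective threefold $X$) it cannot succeed, and even as an account of the $\P^3$ case it leaves the decisive steps unverified: the equivalence of the general form with the $\nu_{\alpha,\beta}(E)=0$ form, the wall-crossing reduction to a distinguished region of $(\alpha,\beta)$, and above all the identification of the relevant heart with a quiver heart and the check of the inequality there. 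These are exactly the content of the cited works, so what you have is a correct road map rather than an argument.

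Two points in the sketch should also be corrected. First, the heart entering Macr\`i's argument is the one recalled in Theorem \ref{thm:macri_p3}, namely $\langle \OO(-1)[3], T(-2)[2], \OO[1], \OO(1) \rangle$: it involves the twisted tangent bundle, so membership in it amounts to Ext-vanishing against a strong exceptional collection of bundles, not merely against line bundles. Second, and more importantly, once $(\alpha,\beta)$ is fixed and one has reduced to $\nu_{\alpha,\beta}(E)=0$ with $H^2\cdot\ch_1^{\beta}(E)>0$, the quantity to be bounded, $\alpha^2 H^2\cdot\ch_1^{\beta}(E)-6\ch_3^{\beta}(E)$, is \emph{linear} in $\ch(E)$ and hence additive on short exact sequences; the full BMT form is quadratic in $\ch$ (the ``degree four'' in the paper refers to wall equations in $(\alpha,\beta)$, not to the Chern character). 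Consequently the worry you raise about losing the estimate under extensions and the proposed ``finite optimization'' over dimension vectors are off target: positivity of the linear functional on the four simple objects of the quiver heart immediately yields it on every object of that heart, and the induction over Jordan--H\"older factors is automatic. This linearity is the engine of the $\P^3$ proof, and incorporating it would remove the main obstacle you identify.
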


By using the definition of $\ch^{\beta}(E)$ and expanding the expression one can
find $x,y \in \R$ depending on $E$ such that the inequality becomes
$$\alpha^2 Q^{\tilt}(E) + \beta^2 Q^{\tilt}(E) + x\beta + y \geq 0.$$
This means the solution set is given by the complement of a semi-disc with
center on the $\beta$-axis or a quadrant to one side of a vertical line.
The conjecture is known for $\P^3$ \cite{MacE14}, the smooth quadric threefold \cite{Sch14}, and all abelian threefolds \cite{BMS14, MP13a, MP13b}.

Another question that comes up in concrete situations is the question whether a given tilt semistable object is a sheaf. For a fixed $\beta$ let
$$c = \inf \{H^2 \cdot \ch_1^{\beta}(E) > 0 : E \in \Coh^{\beta}(X)\}.$$

\begin{lem}[{\cite[Lemma 7.2.1 and 7.2.2]{BMT14}}]
\label{lem:large_volume_limit_tilt}
An object $E \in \Coh^{\beta}(X)$ that is $\nu_{\alpha, \beta}$-semistable
for all $\alpha \gg 0$ is given by one of three possibilities.
\begin{enumerate}
  \item $E = H^0(E)$ is a pure sheaf supported in dimension greater than or equal
  to two that is slope semistable.
  \item $E = H^0(E)$ is a sheaf supported in dimension less than or equal to one.
  \item $H^{-1}(E)$ is a torsion free slope semistable sheaf and $H^0(E)$ is
  supported in dimension less than or equal to one. Moreover, if $\mu_{\beta}(E) <
  0$ then $\Hom(F, E) = 0$ for all sheaves $F$ of dimension less than or equal
  to one.
\end{enumerate}
An object $F \in \Coh^{\beta}(X)$ with $H^2 \cdot \ch_1^{\beta} \in \{0, c\}$ is
$\nu_{\alpha, \beta}$-semistable if and only if it is given by one of the
three types above. 
\end{lem}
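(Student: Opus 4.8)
The plan is to exploit the large-volume asymptotics of $\nu_{\alpha,\beta}$ as $\alpha \to \infty$ together with the canonical sequence
$$0 \to H^{-1}(E)[1] \to E \to H^0(E) \to 0$$
in $\Coh^{\beta}(X)$, with $H^{-1}(E) \in \FF_{\beta}$ and $H^0(E) \in \TT_{\beta}$; since the heart does not depend on $\alpha$, every sub- or quotient object produced below is fixed while $\alpha$ varies. Recall $\nu_{\alpha,\beta}(E) = (H\cdot\ch_2^{\beta}(E) - \tfrac{\alpha^2}{2}H^3\cdot\ch_0(E))/(H^2\cdot\ch_1^{\beta}(E))$, so as $\alpha \to \infty$ this tends to $-\infty$ if $H^3\cdot\ch_0(E) > 0$, to $+\infty$ if $H^3\cdot\ch_0(E) < 0$, and is independent of $\alpha$ (and equals $+\infty$ exactly when $H^2\cdot\ch_1^{\beta}(E) = 0$) if $H^3\cdot\ch_0(E) = 0$. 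Moreover, for two objects of nonzero rank and positive $H^2\cdot\ch_1^{\beta}$ the leading $\alpha^2$-coefficient of $\nu_{\alpha,\beta}$ is $-1/(2\mu_{\beta})$, which is strictly increasing in $\mu_{\beta}$ separately on the positive and the negative axis; this is the device I will use to compare a sheaf with a candidate destabilizing subsheaf for $\alpha \gg 0$.

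The key step is the claim that if $E$ is $\nu_{\alpha,\beta}$-semistable for all $\alpha \gg 0$, then $H^{-1}(E) = 0$ or $H^0(E)$ is supported in dimension $\leq 1$. I would prove this by contradiction: if $H^{-1}(E) \neq 0$ and $H^0(E)$ has a quotient sheaf $G$ of dimension $\geq 2$ (so $H^3\cdot\ch_0(G) \geq 0$ and $H^2\cdot\ch_1^{\beta}(G) > 0$), then $G$ is also a quotient of $E$, while $H^{-1}(E)[1] \hookrightarrow E$ has negative rank and $H^2\cdot\ch_1^{\beta}(H^{-1}(E)[1]) \geq 0$; the asymptotics then give $\nu_{\alpha,\beta}(H^{-1}(E)[1]) > \nu_{\alpha,\beta}(G)$ for $\alpha \gg 0$, contradicting semistability. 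The degenerate case $H^2\cdot\ch_1^{\beta}(H^{-1}(E)[1]) = 0$ forces $\mu_{\beta}(H^{-1}(E)) = 0$, so that subobject has infinite slope, which again is incompatible with semistability unless $E$ itself has infinite slope, a case one rules out separately.

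Granting the claim I would split into two cases. If $H^{-1}(E) = 0$ then $E = H^0(E) \in \TT_{\beta}$ is a sheaf: if it is supported in dimension $\leq 1$ we are in case (2); otherwise a subsheaf of dimension $\leq 1$ (infinite $\nu$) or of pure codimension one ($\alpha$-independent $\nu$) would destabilize $E$ for $\alpha \gg 0$ unless it vanishes, so $E$ is pure of dimension $\geq 2$, and a maximal slope-destabilizing subsheaf $F$ would then lie in $\TT_{\beta}$ with $\mu_{\beta}(F) > \mu_{\beta}(E) > 0$ (or be pure of codimension one, handled by the $\alpha$-independent comparison), whence $\nu_{\alpha,\beta}(F) > \nu_{\alpha,\beta}(E)$ for $\alpha \gg 0$ by monotonicity of $-1/(2\mu_{\beta})$ --- a contradiction, so $E$ is slope-semistable and we are in case (1). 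If instead $H^0(E)$ has dimension $\leq 1$ and $H^{-1}(E) \neq 0$, taking the kernel $K$ of the minimal HN quotient $Q$ of $H^{-1}(E)$ (with $\mu_{\beta}(Q) \leq 0$, hence $Q \in \FF_{\beta}$) gives $K[1] \hookrightarrow H^{-1}(E)[1] \hookrightarrow E$ with $\mu_{\beta}(K) > \mu_{\beta}(H^{-1}(E)) = \mu_{\beta}(E)$, and the same comparison forces $K = 0$, i.e.\ $H^{-1}(E)$ is slope-semistable; moreover if $\mu_{\beta}(E) < 0$ then $H^2\cdot\ch_1^{\beta}(E) > 0$, so $\nu_{\alpha,\beta}(E)$ is finite for $\alpha \gg 0$ and no nonzero map to $E$ from a sheaf of dimension $\leq 1$ (infinite $\nu$) can exist --- giving case (3) with its $\Hom$-vanishing. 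For the final equivalence, when $H^2\cdot\ch_1^{\beta}(E) \in \{0, c\}$ any destabilizing sequence $0 \to A \to E \to B \to 0$ has one of $A, B$ with $H^2\cdot\ch_1^{\beta} = 0$; a quotient with $H^2\cdot\ch_1^{\beta} = 0$ never violates semistability while a nonzero subobject with $H^2\cdot\ch_1^{\beta} = 0$ (infinite slope) always does, and one checks that objects of type (1)--(3) admit no such subobject --- using purity for (1), the structure of $H^{-1}(E)$ together with the $\Hom$-vanishing for (3), and the fact that for (2) and for $H^2\cdot\ch_1^{\beta}(E) = 0$ every quotient automatically has $H^2\cdot\ch_1^{\beta} = 0$.

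I expect the main obstacle to be the bookkeeping around the degenerate slopes: objects with $H^2\cdot\ch_1^{\beta} = 0$ (where $\nu = +\infty$), pure codimension-one sheaves (where $\nu$ is $\alpha$-independent), and the boundary value $\mu_{\beta} = 0$. These are precisely the situations in which "compare leading $\alpha^2$-coefficients" must be replaced by an ad hoc argument, and in which the converse direction for $H^2\cdot\ch_1^{\beta} = c$ relies on $c$ being the \emph{minimal} positive value of $H^2\cdot\ch_1^{\beta}$ on $\Coh^{\beta}(X)$.
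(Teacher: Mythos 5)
Your proposal is correct and is essentially the argument of the source the paper relies on: the paper does not reprove this lemma but quotes \cite[Lemmas 7.2.1 and 7.2.2]{BMT14}, and your large-volume-limit comparison of the leading $\alpha^2$-coefficients of $\nu_{\alpha,\beta}$ along the canonical sequence $0 \to H^{-1}(E)[1] \to E \to H^0(E) \to 0$ is exactly that argument (one small point: in the key step take $G = H^0(E)$ itself, since an arbitrary sheaf quotient of $H^0(E)$ is a quotient of $E$ in $\Coh^{\beta}(X)$ only when its sheaf kernel lies in $\TT_{\beta}$). For the one part the paper does argue, namely that the equivalence for $H^2 \cdot \ch_1^{\beta} \in \{0,c\}$ reduces to the first part because any potentially destabilizing subobject or quotient has infinite tilt slope independently of $\alpha$, your closing paragraph coincides with the paper's remark following the lemma.
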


\begin{rmk}
The lemma implies the following useful fact that will be used multiple times in the upcoming sections. Let $E \in \Coh^{\beta}(X)$ for fixed $\beta$ with $H^2 \cdot \ch_1^{\beta}(E) = c$. Then $E$ is either $\nu_{\alpha, \beta}$-semistable for all $\alpha > 0$ or for no $\alpha > 0$. Indeed, by definition of $\Coh^{\beta}(X)$ any potentially destabilizing subobject $F$ satisfies either $H^2 \cdot \ch^{\beta}_1(F) = 0$ or $H^2 \cdot \ch^{\beta}_1(F) = c$. In the second case the quotient $E/F$ satisfies
$H^2 \cdot \ch^{\beta}_1(E/F) = 0$. Therefore, either the quotient or the
subobject has infinite slope, while $E$ does not have infinite slope.
\end{rmk}


Using the same proof as in the surface case in \cite[Proposition 14.1]{Bri08} leads to the following lemma.

\begin{lem}
\label{lem:stable_sheaves_stable_in_tilt}
Assume $E \in \Coh(X)$ is a slope stable sheaf and $\beta < \mu(E)$. Then $E$ is
$\nu_{\alpha, \beta}$-stable for all $\alpha \gg 0$.
\end{lem}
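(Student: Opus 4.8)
The statement asserts that a slope stable sheaf $E$ with $\beta < \mu(E)$ is $\nu_{\alpha,\beta}$-stable for all $\alpha \gg 0$. The plan is to imitate the argument of \cite[Proposition 14.1]{Bri08}. First I would check that $E \in \Coh^{\beta}(X)$: since $E$ is slope stable with $\mu(E) > \beta$, equivalently $\mu_{\beta}(E) > 0$, the sheaf $E$ lies in the torsion class $\TT_{\beta}$ (every quotient of a $\mu$-stable sheaf has slope $\geq \mu(E) > \beta$, using stability to rule out equality for proper quotients, or at worst $E$ is semistable and quotients have slope $\geq \mu(E)$), hence $E \in \Coh^{\beta}(X)$ sitting in degree $0$.

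The core of the argument is then a boundedness statement: among all short exact sequences $0 \to F \to E \to G \to 0$ in $\Coh^{\beta}(X)$ that could destabilize $E$, the relevant invariants are bounded, so only finitely many numerical walls can occur, and all of them lie below some $\alpha_0$. Concretely, for $\alpha \gg 0$ the dominant term of $\nu_{\alpha,\beta}$ is $-\tfrac{\alpha^2}{2}\,\frac{H^3 \cdot \ch_0^{\beta}}{H^2 \cdot \ch_1^{\beta}}$, so as $\alpha \to \infty$ the sign of $\nu_{\alpha,\beta}$ is governed by $H^3 \cdot \ch_0^{\beta} = H^3 \cdot \ch_0$. A destabilizing subobject $F \into E$ with $\nu_{\alpha,\beta}(F) \geq \nu_{\alpha,\beta}(E)$ for arbitrarily large $\alpha$ must therefore satisfy $\mu_{\beta}(F) \geq \mu_{\beta}(E) > 0$ (comparing the leading behaviour), and in particular $\rk F \leq \rk E$. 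The long exact cohomology sequence shows $H^{-1}(F) = 0$ and $H^{-1}(G)$ is a subsheaf of $H^0(F)$'s twin — more precisely $F$ and $G$ are genuine sheaves or $G$ has a one-step complex structure; in either case $\ch_0(F), H^2\cdot\ch_1(F)$ range over a bounded set (bounded rank, and $H^2 \cdot \ch_1^{\beta}(F) \in (0, H^2\cdot\ch_1^{\beta}(E)]$). Then the Bogomolov inequality for tilt stability (the Bogomolov Inequality Theorem quoted above), applied to the Harder--Narasimhan and Jordan--Hölder factors, bounds $H \cdot \ch_2^{\beta}(F)$ as well. Hence there are only finitely many numerical walls for $v(E)$ with $\ch_0 \neq \ch_0(E)$ component contributing, and — using the Structure Theorem for Walls, part (1), that walls are semicircles with center on the $\beta$-axis — all of them are bounded, say contained in $\{\alpha \leq \alpha_0\}$. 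For $\alpha > \alpha_0$ no wall is crossed.

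It remains to rule out that $E$ is \emph{strictly} semistable for large $\alpha$, i.e.\ that there is a subobject $F$ with $\nu_{\alpha,\beta}(F) = \nu_{\alpha,\beta}(E)$ identically for all large $\alpha$; this forces $v(F)$ and $v(E)$ proportional in $H\cdot\ch_{\leq 2}$, so $H^2 \cdot \ch_1^{\beta}(F) = \lambda\, H^2 \cdot \ch_1^{\beta}(E)$ and $\ch_0(F) = \lambda \ch_0(E)$ with $\lambda \in (0,1]$, and rationality forces $F$ to have the same slope $\mu(F) = \mu(E)$ and strictly smaller rank (if $\lambda < 1$) or $F = H^0(F) \subset E$ of full rank with the quotient supported in dimension $\leq 1$. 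The first case contradicts slope stability of $E$ directly; in the second case the quotient $G = E/F$ in $\Coh^\beta(X)$ is a torsion sheaf of dimension $\leq 1$, which forces $H^2 \cdot \ch_1^{\beta}(G) = 0$, i.e.\ $\lambda = 1$, hence $F = E$. So $E$ is in fact $\nu_{\alpha,\beta}$-stable for $\alpha \gg 0$. The main obstacle is the boundedness step: one needs to argue carefully that potential destabilizing subobjects in the tilted heart have bounded Chern characters, combining the leading-order analysis of $\nu_{\alpha,\beta}$ with the cohomology sheaf structure of objects of $\Coh^\beta(X)$ and the Bogomolov inequality; everything after that is formal, mirroring Bridgeland's surface proof.
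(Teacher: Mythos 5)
Your overall strategy---adapting Bridgeland's proof of \cite[Proposition 14.1]{Bri08}---is exactly what the paper does: its proof of this lemma is nothing more than a citation of that argument, and most of your sketch (membership of $E$ in $\TT_{\beta}$, the reduction of the large-$\alpha$ comparison to $\mu_{\beta}$, the cohomology sequence showing a destabilizing subobject $F$ is a sheaf with $0 \le \ch_0(F) \le \ch_0(E)$ and $0 < H^2\cdot\ch_1^{\beta}(F) \le H^2\cdot\ch_1^{\beta}(E)$) is sound. However, your final step is wrong as written, and it sits precisely at the point where the surface argument must be adjusted to the very weak setting. When $(\ch_0, H^2\cdot\ch_1^{\beta}, H\cdot\ch_2^{\beta})(F)$ is proportional to the class of $E$ with $\lambda = 1$, you conclude ``hence $F = E$''. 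This is false: for any surjection $E \onto \OO_P$ onto a skyscraper sheaf, the kernel $F$ is a proper subobject of $E$ in $\Coh^{\beta}(X)$ (it is again slope stable of the same slope, hence in $\TT_{\beta}$, and the sequence $0 \to F \to E \to \OO_P \to 0$ lies in $\Coh^{\beta}(X)$) with $H\cdot\ch_{\le 2}(F) = H\cdot\ch_{\le 2}(E)$, so $\nu_{\alpha,\beta}(F) = \nu_{\alpha,\beta}(E)$ identically. Such subobjects always exist, so if stability were tested by comparing $\nu_{\alpha,\beta}(F)$ with $\nu_{\alpha,\beta}(E)$, the lemma would simply fail: $E$ would only ever be strictly semistable. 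What saves the statement is the paper's definition of (semi)stability for very weak stability conditions: one compares the subobject with the quotient, and a quotient $G$ with $H^2\cdot\ch_1^{\beta}(G) = 0$ (here a sheaf supported in dimension $0$) has $\nu_{\alpha,\beta}(G) = +\infty$, hence never witnesses instability. Your proof must invoke this convention explicitly in place of the incorrect deduction $F = E$.

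Two smaller points. First, ``only finitely many numerical walls'' does not follow for irrational $\beta$ (the possible values of $H^2\cdot\ch_1^{\beta}(F)$ are then not discrete, and in Picard rank bigger than one boundedness alone gives no finiteness); what you actually need, and what your constraints together with the Bogomolov inequality applied to both $F$ and $G$ on a wall do give, is that every actual wall meeting the fixed vertical line $\{\beta = \beta_0\}$ is bounded in $\alpha$---argue boundedness (or work directly with the leading terms of $\nu_{\alpha,\beta}$ as Bridgeland does) rather than finiteness. Second, in the case $\lambda < 1$ with $\mu(F) = \mu(E)$ you cannot quote slope stability of $E$ ``directly'': $F$ is only a subobject in the tilted heart, so you must pass to the image of the sheaf map $F \to E$, whose kernel $H^{-1}(G) \in \FF_{\beta}$ has $\mu_{\beta} \le 0 < \mu_{\beta}(F)$, before comparing slopes. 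These are the routine modifications Bridgeland's argument requires, but they should be spelled out.
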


\subsection{Bridgeland Stability}

We will recall the definition of a Bridgeland stability condition from
\cite{Bri07} and show how they can be conjecturally constructed on threefolds
based on the BMT-inequality as described in \cite{BMT14}. It is known that the
inequality holds on $\P^3$ due to \cite{MacE14} and we will apply it in a later
section to study concrete examples of moduli spaces of complexes in this case.

\begin{defn}
A \textit{Bridgeland (pre-)stability condition} on the category $D^b(X)$ is a very weak (pre-)stability condition $(P, Z)$ such that $Z(E) \neq 0$ for all semistable
objects $E \in D^b(X)$. We denote the subspace of Bridgeland
stability conditions by $\Stab(X,v) \subset \Stab^{vw}(X,v)$.
\end{defn}

If $\AA = P((0,1])$ is the corresponding heart, then we could have equivalently defined a Bridgeland stability condition by the property $Z(E) \neq 0$ for all non zero $E \in \AA$. Note that in this situation choosing the heart to be $P((0,1])$ instead of $P((\phi - 1,\phi])$ for any $\phi \in \mathbb{R}$ is arbitrary and any other choice works just as well. In some very special cases it is possible to choose $\phi$ such that the corresponding heart is equivalent to the category of In order to have any hope of actually computing wall-crossing
behavior, it is necessary for walls in Bridgeland stability to be somewhat
reasonably behaved. The following result due to \cite[Section 9]{Bri08} is a major step towards that.representations of a quiver with relations. This will be particularly useful in the case of $\P^3$.

\begin{thm}[{\cite[Section 7]{Bri07}}]
The map $(\AA,Z) \mapsto Z$ from $\Stab(X,v)$ to $\Hom(\Gamma, \C)$ is a local
homeomorphism. In particular, $\Stab(X,v)$ is a complex manifold.
\end{thm}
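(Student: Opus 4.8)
The statement is Bridgeland's deformation theorem. Since every element of $\Stab(X,v)$ satisfies the support property (Section~\ref{sec:veryweak}), the argument is that of \cite[Section 7]{Bri07}, in the form adapted to a finite-rank lattice and to the support property as in \cite[Appendix A]{BMS14}. Continuity of the central charge map is immediate from the definition of the topology on $\Stab^{vw}(X,v)$, so the plan is: around any $\sigma_0 = (P_0, Z_0) \in \Stab(X,v)$ produce open neighborhoods $\sigma_0 \in V$ and $Z_0 \in U \subset \Hom(\Gamma,\C)$ such that $(\AA,Z)\mapsto Z$ restricts to a homeomorphism $V \to U$. Two things must be checked: \textbf{(a) local injectivity}, that a central charge $W$ close to $Z_0$ is realized by at most one stability condition close to $\sigma_0$; and \textbf{(b) local surjectivity with continuous inverse}, that every such $W$ is realized by some $\sigma_W$ close to $\sigma_0$, varying continuously with $W$.

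For (b), I would fix a norm on $\Gamma\otimes\R$ and take $\|W - Z_0\|$ small. By the support property for $\sigma_0$ with respect to a form $Q$, there is $C>0$ with $|Z_0(v(E))| \geq C\|v(E)\|$ for every $\sigma_0$-semistable $E$; hence $|(W-Z_0)(v(E))| \leq C^{-1}\|W - Z_0\|\,|Z_0(v(E))|$, so $W(v(E))\neq 0$ and $\arg W(v(E))$ differs from $\pi\,\phi_{\sigma_0}(E)$ by less than some $\epsilon$ with $\epsilon \to 0$ as $\|W - Z_0\| \to 0$. One then defines the deformed slicing $P_W$ by refining the $\sigma_0$-Harder--Narasimhan filtrations using the new slope $\mu_W = -\Re(W)/\Im(W)$: the $\Hom$-vanishing between descending $P_W$-phases follows because phases move by less than $\epsilon < 1/2$, and the real content is the existence of $P_W$-Harder--Narasimhan filtrations, which reduces, via the $\sigma_0$-filtrations, to producing HN filtrations with respect to $\mu_W$ inside each subquotient category $P_0((\phi-\epsilon,\phi+\epsilon))$. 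One also checks that $(P_W, W)$ still satisfies the support property with the same $Q$ once $W$ is close enough to $Z_0$ — being a stability condition is an open condition, in the spirit of Lemma~\ref{lem:Qzero} — and in particular $W(E)\neq 0$ for $P_W$-semistable $E$, so $\sigma_W := (P_W, W) \in \Stab(X,v)$. Finally $d(P_0, P_W) \leq \epsilon$, which shows $W \mapsto \sigma_W$ is continuous and provides a continuous local inverse to the central charge map on a neighborhood $U$ of $Z_0$.

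For (a), recall the generalized metric $d(P,P') = \sup_{0\neq E}\{|\phi^+_P(E) - \phi^+_{P'}(E)|,\ |\phi^-_P(E) - \phi^-_{P'}(E)|\}$, which induces the topology on $\Stab^{vw}(X,v)$. The required input is the rigidity lemma that a stability condition with prescribed central charge $W$ is unique within any $d$-ball of radius $<1$: if $(P,W)$ and $(P',W)$ are stability conditions with $d(P,P')<1$, then comparing semistable objects phase by phase and inducting on the lengths of HN filtrations forces $P = P'$, using again at the base of the induction that width-$<1$ subquotient categories of $P_0$ are of finite length. Shrinking $V$ so that all its elements lie within $d<1/2$ of $\sigma_0$, any two of them lie within $d<1$ of each other, and (a) follows. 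Combining (a) and (b) gives the local homeomorphism, and since $\Hom(\Gamma,\C)$ is a finite-dimensional complex vector space, $\Stab(X,v)$ inherits the structure of a complex manifold.

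The principal obstacle is the Harder--Narasimhan property for the deformed slicing $P_W$, and the analogous finiteness used in (a): one must exclude infinite chains of $\mu_W$-destabilizing sub- and quotient objects, i.e. show that the width-$<1$ subquotient categories of $P_0$ are of finite length with respect to the perturbed slope. This is exactly the step that would fail for a mere pre-stability condition, and it is precisely what the support property — equivalently, Bridgeland's local finiteness, via the uniform bound $|Z_0(v(E))| \geq C\|v(E)\|$ on semistable objects — is there to guarantee; once this finiteness is in hand, the remaining verifications are bookkeeping.
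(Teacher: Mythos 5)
Your outline is correct and follows the same route as the source the paper relies on: the paper gives no proof of this theorem, citing \cite[Section 7]{Bri07}, and your argument is exactly Bridgeland's deformation theorem in its support-property formulation (as in \cite{KS08} and \cite[Appendix A]{BMS14}), which is the appropriate form here since $\Stab(X,v)$ is defined inside the space of very weak stability conditions satisfying the support property. The two key points you isolate --- the Harder--Narasimhan property for the deformed slicing via finite length of the thin subquotient categories, and uniqueness of a stability condition with prescribed central charge within a $d$-ball of radius $<1$ --- are indeed the substance of that proof, so no gap.
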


The \emph{mass} of $E \in D^b(X)$ with respect to $\sigma = (\AA, Z) \in \Stab(X, v)$ is defined as
\[
m_{\sigma}(E) = \sum_i |Z(A_i)|,
\]
where the $A_i$ are the semistable factors of $E$ in its Harder-Narasimhan filtration. In order to have any hope of actually computing wall-crossing behavior, it is necessary for walls in Bridgeland stability to be somewhat reasonably behaved. The following result due to \cite[Section 9]{Bri08} is a major step towards that.

\begin{thm}
\label{thm:locally_finite}
Let $S \subset D^b(X)$ be a set of objects of bounded mass in a compact subset $\BB \subset \Stab(X, v)$, i.e.,
\[
\sup\{m_{\sigma}(E) : E \in S \} < \infty
\]
for some $\sigma \in \BB$. Then the subset of semistable objects of $S$ varies in a finite wall and chamber structure in $\BB$.
\end{thm}

The first application of this theorem is that walls in $\Stab(X,v)$ for objects with fixed numerical invariants are locally finite. More widely, it also implies that the Harder-Narasimhan filtrations of objects with fixed numerical invariants vary in a locally finite wall and chamber structure. Lastly, even the the stable factors of these semistable factors vary in a locally finite wall and chamber structure. The last two statements are a consequence of the fact that the mass of a semistable or stable factor $A$ of $E$ is smaller than or equal to the mass of $E$.

An important question is how moduli spaces change set theoretically at walls. In case the destabilizing subobject and quotient are both stable this has a satisfactory answer due to \cite[Lemma 5.9]{BM11}. Note that this proof does not work in the case of very weak stability conditions due to the lack of unique factors in the Jordan-H\"older filtration.

\begin{lem}
\label{lem:wall_crossing}
Let $\sigma = (\AA, Z) \in \Stab(X)$ such that there are stable object $F,G \in
\AA$ with $\mu_{\sigma}(F) = \mu_{\sigma}(G)$. Then there is an open
neighborhood $U$ around $\sigma$ where non trivial extensions $0 \to F \to E
\to G \to 0$ are stable exactly for those $\sigma' \in U$ with $\phi_{\sigma'}(F) <
\phi_{\sigma'}(G)$.
\begin{proof}
Since stability is an open property, there is an open neighborhood $U$ of
$\sigma$ in which both $F$ and $G$ are stable. By Theorem \ref{thm:locally_finite}, we can shrink $U$ to obtain a neighborhood of $\sigma$ in which all walls for $v(F) + v(G)$ intersect $\sigma$, and there are only finitely many such walls. Even more, we can choose $U$ such that the same holds for all stable factors of objects with invariants $v(F) + v(G)$.

In particular, if $\sigma' \in U$ and $A \subset E$ is a stable subobject with larger or equal slope at $\sigma'$, then $A$ must have larger or equal slope at $\sigma$ as well. However, $E$ is strictly $\sigma$-semistable, and therefore, $A$ is a Jordan-H\"older factor $E$. Since Jordan-H\"older factors are unique up to order, we must have $A \cong F$ or $A \cong G$. However, since $E$ is obtained from a non-trivial extension, we have $\Hom(G, E) = 0$. Therefore, $A = F$ and the claim follows.
\end{proof}
\end{lem}

It turns out that while constructing very weak stability conditions is not very
difficult, constructing Bridgeland stability conditions is in general a wide
open problem. Note that for any smooth projective variety of dimension bigger
than or equal to two, there is no Bridgeland stability condition factoring
through the Chern character for $\AA = \Coh(X)$ due to \cite[Lemma
2.7]{Tod09}.

Tilt stability does not define Bridgeland stability as can be seen by the fact that
skyscraper sheaves are mapped to the origin. In \cite{BMT14} it was conjectured
that one has to tilt $\Coh^{\beta}(X)$ again as follows in order to construct a
Bridgeland stability condition on a threefold. Let 
\begin{align*}
\TT_{\alpha, \beta} &= \{E \in \Coh^{\beta}(X) : \text{any quotient $E
\onto G$ satisfies $\nu_{\alpha, \beta}(G) > 0$} \}, \\
\FF_{\alpha, \beta} &=  \{E \in \Coh^{\beta}(X) : \text{any subobject $F
\into E$ satisfies $\nu_{\alpha, \beta}(F) \leq 0$} \}
\end{align*}
and set $\AA^{\alpha, \beta}(X) = \langle \FF_{\alpha, \beta}[1],
\TT_{\alpha, \beta} \rangle $. For any $s>0$ they define
\begin{align*}
Z_{\alpha,\beta,s} &= -\ch^{\beta}_3 + (s+\tfrac{1}{6})\alpha^2
H^2 \cdot \ch^{\beta}_1 + i (H \cdot \ch^{\beta}_2 - \frac{\alpha^2}{2} H^3 \cdot \ch^{\beta}_0), \\
\lambda_{\alpha,\beta,s} &= -\frac{\Re(Z_{\alpha,\beta,s})}{\Im(Z_{\alpha,\beta,s})}.
\end{align*}

In this case the bilinear form is given by
\begin{align*}
Q_{\alpha, \beta, K}((r,c,d,e),(R,C,D,E)) = \
&Q^{\tilt}((r,c,d),(R,C,D)) (K\alpha^2 + \beta^2) \\
&+ (3Er + 3Re - Cd - Dc) \beta \\
&- 3Ce - 3Ec + 4Dd.
\end{align*}
for some $K \in (1, 6s + 1)$. Notice that for $K=1$ this comes directly from the BMT-inequality.

\begin{thm}[{\cite[Corollary 5.2.4]{BMT14}, \cite[Lemma 8.8]{BMS14}}]
If the BMT inequality holds, then $(\AA^{\alpha, \beta}(X), Z_{\alpha, \beta,
s})$ is a Bridgeland stability condition for all $s > 0$. The support
property is satisfied with respect to $Q_{\alpha, \beta, K}$ for any $K \in (1, 6s+1)$.
\end{thm}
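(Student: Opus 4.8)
The plan is to verify directly the three axioms of a Bridgeland stability condition for $(\AA^{\alpha,\beta}(X), Z_{\alpha,\beta,s})$ and then to exhibit the quadratic form witnessing the support property. That $\AA^{\alpha,\beta}(X)$ is the heart of a bounded t-structure is immediate: $(\TT_{\alpha,\beta}, \FF_{\alpha,\beta})$ is a torsion pair on $\Coh^{\beta}(X)$ — the existence of a maximal $\TT_{\alpha,\beta}$-subobject of any object comes from the Harder--Narasimhan property in tilt stability, \cite[Lemma 3.2.4]{BMT14} — so $\AA^{\alpha,\beta}(X) = \langle \FF_{\alpha,\beta}[1], \TT_{\alpha,\beta}\rangle$ is a tilt in the sense of \cite{HRS96} and hence a heart. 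Existence of Harder--Narasimhan filtrations inside $\AA^{\alpha,\beta}(X)$ with respect to $\lambda_{\omega,B,s}$ is obtained as in \cite[Section 5]{BMT14} and does not use the BMT inequality.

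The substantive part of the stability-function axiom is positivity: $Z_{\alpha,\beta,s}(E) \in \mathbb{H} \cup \R_{<0}$ for every $0 \neq E \in \AA^{\alpha,\beta}(X)$, where $\mathbb{H}$ is the open upper half plane. Since $\Im Z_{\alpha,\beta,s} = H\cdot\ch_2^{\beta} - \tfrac{\alpha^2}{2}H^3\cdot\ch_0^{\beta} = -\Re Z^{\tilt}_{\alpha,\beta}$ and the torsion pair was defined in terms of the tilt slope $\nu_{\alpha,\beta} = -\Re Z^{\tilt}_{\alpha,\beta}/\Im Z^{\tilt}_{\alpha,\beta}$, one gets $\Im Z_{\alpha,\beta,s}(E) \geq 0$ for all $E$ in the heart by construction. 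The real work is to show $\Re Z_{\alpha,\beta,s}(E) < 0$ whenever $\Im Z_{\alpha,\beta,s}(E) = 0$ and $E \neq 0$. For such an $E$ the tilt-cohomology objects $H^0_{\Coh^{\beta}}(E) \in \TT_{\alpha,\beta}$ and $H^{-1}_{\Coh^{\beta}}(E) \in \FF_{\alpha,\beta}$ also have vanishing $\Im Z_{\alpha,\beta,s}$, and a short analysis of their tilt Harder--Narasimhan factors — using the structure theory of tilt semistable objects in Lemma \ref{lem:large_volume_limit_tilt} and the discussion following it — shows that $H^0_{\Coh^{\beta}}(E)$ is a finite-length sheaf, for which $\Re Z_{\alpha,\beta,s} = -\ch_3 \leq 0$ and vanishes only for the zero object, while $H^{-1}_{\Coh^{\beta}}(E)$ is $\nu_{\alpha,\beta}$-semistable with $\nu_{\alpha,\beta} = 0$ and $H^2\cdot\ch_1^{\beta} > 0$. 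For the latter I would substitute $H\cdot\ch_2^{\beta} = \tfrac{\alpha^2}{2}H^3\cdot\ch_0^{\beta}$ into the BMT inequality; the term $4(H\cdot\ch_2^{\beta})^2$ then cancels against part of $\alpha^2 Q^{\tilt}(E)$ and it reduces to $(H^2\cdot\ch_1^{\beta})\big(\alpha^2 H^2\cdot\ch_1^{\beta} - 6\ch_3^{\beta}\big) \geq 0$, whence $\ch_3^{\beta} \leq \tfrac{\alpha^2}{6}H^2\cdot\ch_1^{\beta}$ and
$$\Re Z_{\alpha,\beta,s}\big(H^{-1}_{\Coh^{\beta}}(E)\big) = -\ch_3^{\beta} + (s+\tfrac16)\alpha^2 H^2\cdot\ch_1^{\beta} \geq s\,\alpha^2 H^2\cdot\ch_1^{\beta} > 0.$$
Additivity of $\Re Z_{\alpha,\beta,s}$ on the short exact sequence $0 \to H^{-1}_{\Coh^{\beta}}(E)[1] \to E \to H^0_{\Coh^{\beta}}(E) \to 0$ then forces $\Re Z_{\alpha,\beta,s}(E) < 0$. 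In particular $Z_{\alpha,\beta,s}(E) \neq 0$, which is exactly the extra condition upgrading a very weak stability condition to a Bridgeland stability condition, and this is the only place where the BMT inequality is used in verifying the axioms.

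For the support property I would take $Q = Q_{\alpha,\beta,K}$ with $K \in (1, 6s+1)$. Condition (2), negativity of $Q$ on $\ker Z_{\alpha,\beta,s}$, is a direct computation: in twisted Chern character coordinates $(\bar r, \bar c, \bar d, \bar e)$ the form has diagonal part $K\alpha^2(\bar c^2 - 2\bar r\bar d) + 4\bar d^2 - 6\bar c\bar e$ (for $K=1$ this is the left-hand side of the BMT inequality), and on $\ker Z_{\alpha,\beta,s}$, where $\bar d = \tfrac{\alpha^2}{2}\bar r$ and $\bar e = (s+\tfrac16)\alpha^2\bar c$, it collapses to $(K-6s-1)\alpha^2\bar c^2 + (1-K)\alpha^4\bar r^2$, which is strictly negative for every nonzero class precisely when $1 < K < 6s+1$.

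The remaining, and hardest, point is condition (1): every $Z_{\alpha,\beta,s}$-semistable $E$ satisfies $Q_{\alpha,\beta,K}(E) \geq 0$. The obstacle is that the hypothesis — the BMT inequality — directly supplies the inequalities $Q_{\alpha,\beta,1}(E) \geq 0$ and $Q^{\tilt}(E) \geq 0$ only for $\nu_{\alpha,\beta}$-\emph{semistable} objects, whereas a $Z_{\alpha,\beta,s}$-semistable object need not be tilt semistable. Following \cite[Section 8]{BMS14}, I would resolve this by analysing the tilt-cohomology objects $H^{-1}_{\Coh^{\beta}}(E)$ and $H^0_{\Coh^{\beta}}(E)$ of a $Z_{\alpha,\beta,s}$-semistable $E$ together with their tilt Harder--Narasimhan factors: for these genuinely tilt semistable building blocks both the tilt Bogomolov inequality $Q^{\tilt} \geq 0$ and the BMT inequality hold, the cross terms are controlled by the discriminant estimates of Lemma \ref{lem:discriminant_properties} applied within tilt stability, and the identity $Q_{\alpha,\beta,K} = Q_{\alpha,\beta,1} + (K-1)\alpha^2 Q^{\tilt}$ together with $K > 1$ then propagates $Q_{\alpha,\beta,K} \geq 0$ up to $E$. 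With conditions (1) and (2) established, $(\AA^{\alpha,\beta}(X), Z_{\alpha,\beta,s})$ satisfies the support property with respect to $Q_{\alpha,\beta,K}$, completing the proof.
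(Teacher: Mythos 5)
The paper does not actually prove this statement; it is quoted verbatim with citations to \cite[Corollary 5.2.4]{BMT14} and \cite[Lemma 8.8]{BMS14}, so the only fair comparison is with the arguments of those sources. Your reconstruction of the first half is correct and is essentially theirs: the double tilt is a heart, $\Im Z_{\alpha,\beta,s}\geq 0$ on it by construction, and in the degenerate case $\Im Z_{\alpha,\beta,s}(E)=0$ your reduction to a zero-dimensional sheaf $\HH^0_{\beta}(E)$ plus a $\nu_{\alpha,\beta}$-semistable $\HH^{-1}_{\beta}(E)$ with $\nu_{\alpha,\beta}=0$, followed by substituting $H\cdot\ch_2^{\beta}=\tfrac{\alpha^2}{2}H^3\cdot\ch_0^{\beta}$ into the BMT inequality to get $\Re Z_{\alpha,\beta,s}\geq s\alpha^2 H^2\cdot\ch_1^{\beta}>0$, is exactly how the conjecture enters in \cite{BMT14}. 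Likewise your computation that on $\ker Z_{\alpha,\beta,s}$ the form collapses to $(K-6s-1)\alpha^2\bar c^2+(1-K)\alpha^4\bar r^2$ is correct and explains the constraint $K\in(1,6s+1)$. (A small caveat: the HN property is not quite independent of the BMT inequality, since one needs the positivity statement before $\lambda_{\alpha,\beta,s}$ is even a stability function on the heart; the extra input beyond that is the standard noetherianity/discreteness argument.)

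The genuine gap is in condition (1) of the support property, which you yourself flag as the hardest point. The mechanism you describe — decompose a $\lambda_{\alpha,\beta,s}$-semistable $E$ into its tilt-cohomologies and their tilt-HN factors, apply the BMT and tilt-Bogomolov inequalities to each factor, and control the cross terms by Lemma \ref{lem:discriminant_properties} — does not work as stated: that lemma gives $Q(F,G)\geq 0$ only for semistable objects of \emph{equal} slope, because its proof requires a positive linear combination of the two classes to lie in $\ker Z$, where $Q$ is nonpositive. The tilt-HN factors of $\HH^{-1}_{\beta}(E)$ and $\HH^0_{\beta}(E)$ have pairwise different tilt slopes in general, so the cross terms are uncontrolled and $Q_{\alpha,\beta,K}(E)\geq 0$ cannot be obtained by additivity; if such a purely additive argument worked, the deformation machinery of \cite{BMS14} would be superfluous. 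What \cite[Section 8]{BMS14} actually does is a deformation in the parameter: first prove the inequality in the large volume limit $s\gg 0$, where Lemma \ref{lem:large_volume_limit_bridgeland} says a semistable object is either tilt-semistable or a tilt-semistable object modified by a zero-dimensional sheaf — there the BMT inequality gives $Q_{\alpha,\beta,1}\geq 0$, the tilt Bogomolov inequality gives $Q^{\tilt}\geq 0$, hence $Q_{\alpha,\beta,K}=Q_{\alpha,\beta,1}+(K-1)\alpha^2 Q^{\tilt}\geq 0$, and the zero-dimensional correction only increases the form since $\bar c(E)\leq 0$ — and then transport the support property back to the given $s$ using the results quoted in Lemma \ref{lem:Qzero} (\cite[Lemma A.7, Proposition A.8]{BMS14}), which apply precisely because, as your own kernel computation shows, a fixed $K\in(1,6s+1)$ keeps $Q_{\alpha,\beta,K}$ negative definite on $\ker Z_{\alpha,\beta,s'}$ for all $s'\geq s$. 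Replacing your cross-term step by this limit-and-deform argument would complete the proof.
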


Note that as a consequence the BMT inequality holds for all $\lambda_{\alpha,
\beta, s}$-stable objects. In \cite[Proposition 8.10]{BMS14} it is shown
that this implies a continuity result just as in the case of tilt stability.

\begin{prop}
\label{prop:bridgeland_continuous}
The function $\R_{>0} \times \R \times \R_{>0} \to \Stab(X,v)$ defined by 
$(\alpha, \beta, s) \mapsto (\AA^{\alpha, \beta}(X), Z_{\alpha, \beta, s})$ is
continuous.
\end{prop}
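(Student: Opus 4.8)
The plan is to follow the blueprint already set up for tilt stability in Proposition~\ref{prop:tilt_continuous}, namely to verify the abstract continuity criterion recalled from \cite[Appendix B]{BMS14} for a family of very weak stability conditions: continuity holds provided the central charges vary continuously in $\Hom(\Gamma, \C)$ and the family satisfies a uniform support property, i.e. there is an open neighborhood of each parameter on which a single (continuously varying) quadratic form $Q$ works. The topology on $\Stab^{vw}(X,v)$ was defined precisely so that continuity of a map into it amounts to continuity of $(\alpha,\beta,s) \mapsto Z_{\alpha,\beta,s}$ together with continuity of $\phi^\pm(E)$ for every $E$, and the latter is what the uniform support property buys us. So the first step is to observe that $(\alpha,\beta,s) \mapsto Z_{\alpha,\beta,s}$ is visibly continuous: each component ($-\ch_3^\beta$, $(s+\tfrac16)\alpha^2 H^2\cdot\ch_1^\beta$, $H\cdot\ch_2^\beta$, $-\tfrac{\alpha^2}{2}H^3\cdot\ch_0^\beta$) is a polynomial in $\alpha,\beta,s$ with coefficients determined by the fixed Chern character data, hence continuous on $\R_{>0}\times\R\times\R_{>0}$.

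The second step is the uniform support property. Here I would invoke the theorem recalled just above the statement (\cite[Corollary 5.2.4]{BMT14}, \cite[Lemma 8.8]{BMS14}): for $\P^3$ the BMT inequality holds, so for every $(\alpha,\beta,s)$ with $s>0$ the pair $(\AA^{\alpha,\beta}(X), Z_{\alpha,\beta,s})$ is an honest Bridgeland stability condition satisfying the support property with respect to $Q_{\alpha,\beta,K}$ for any $K\in(1,6s+1)$. The point is that $Q_{\alpha,\beta,K}$ depends continuously (indeed polynomially) on $(\alpha,\beta,K)$, and the range $(1,6s+1)$ of admissible $K$ varies continuously with $s$; so around any fixed $(\alpha_0,\beta_0,s_0)$ one can fix a single value $K_0\in(1,6s_0+1)$ and shrink to a neighborhood $U$ of $(\alpha_0,\beta_0,s_0)$ small enough that $K_0\in(1,6s+1)$ for all $(\alpha,\beta,s)\in U$. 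Then every stability condition parametrized by $U$ satisfies the support property with respect to $Q_{\alpha,\beta,K_0}$, and $Q_{\alpha,\beta,K_0}$ varies continuously over $U$. This is exactly the hypothesis needed to apply the continuity result of \cite[Appendix B]{BMS14} (the same one used for Proposition~\ref{prop:tilt_continuous}), which then yields continuity of $(\alpha,\beta,s)\mapsto(\AA^{\alpha,\beta}(X), Z_{\alpha,\beta,s})$ on $U$; since continuity is local and $U$ was an arbitrary small neighborhood, we are done.

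One mild point that needs care, and which I expect to be the only real obstacle, is the book-keeping around the lattice $\Gamma$: the heart $\AA^{\alpha,\beta}(X)$ and the slope $\nu_{\alpha,\beta}$ used to define the torsion pair $(\TT_{\alpha,\beta},\FF_{\alpha,\beta})$ depend on $\beta$, and $\FF_{\alpha,\beta}[1]$, $\TT_{\alpha,\beta}$ jump as $\beta$ varies, so one should not try to prove continuity by naively comparing hearts. The abstract criterion sidesteps this precisely because it only refers to the slicings $P$ and the functions $\phi^\pm$, not to the hearts directly; this is the reason the \cite{BMS14} packaging is the right tool and why I would not attempt a hands-on argument. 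I would also remark, as the paper does in Remark~\ref{rmk:bigger_lattice}, that $Q_{\alpha,\beta,K}$ is naturally defined on the full lattice carrying $H\cdot\ch_{\le 3}$ (equivalently all of $\ch$) even though $Z_{\alpha,\beta,s}$ factors through a smaller quotient, so the continuity of $Q$ is genuinely a statement about polynomial dependence of its coefficients and involves no subtlety beyond that.

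Finally, for completeness I would note that the conclusion lands in $\Stab(X,v)$ rather than merely $\Stab^{vw}(X,v)$: this is immediate since each $(\AA^{\alpha,\beta}(X), Z_{\alpha,\beta,s})$ is a genuine Bridgeland stability condition by the cited theorem, and $\Stab(X,v)$ carries the subspace topology from $\Stab^{vw}(X,v)$, so a map that is continuous into the latter and lands in the former is continuous into the former.
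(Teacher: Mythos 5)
Your overall strategy---reduce everything to the support-property machinery of \cite{BMS14}---is the same as the paper's, but the paper does not actually argue this itself: it records that the BMT inequality holds for all $\lambda_{\alpha,\beta,s}$-stable objects and then cites \cite[Proposition 8.10]{BMS14} for the continuity statement. The parts of your write-up that are solid are the continuity of $(\alpha,\beta,s)\mapsto Z_{\alpha,\beta,s}$ and the bookkeeping that lets you fix a single $K_0\in(1,6s+1)$ on a small neighborhood, so that all nearby members satisfy the support property with respect to one continuously varying quadratic form.

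The gap is in the ``abstract continuity criterion'' you invoke: continuity of the central charges together with a locally uniform support property does \emph{not} by itself imply continuity of a family of stability conditions, and it is not what the uniform support property ``buys''. The support property gives that the forgetful map $\Stab(X,v)\to\Hom(\Gamma,\C)$ is a local homeomorphism near each member of the family (existence and uniqueness of small deformations, local finiteness of walls), but it does not control $\phi^{\pm}$ for an \emph{explicitly given} family: for instance, a family that is constant except that on part of the parameter space one replaces $\sigma$ by $\sigma[2]$ has constant central charge and uniform support property, yet is discontinuous, because the slicing jumps while $Z$ does not. So the essential remaining step---and this is precisely where the proof of \cite[Proposition 8.10]{BMS14} does its work---is to identify the concretely constructed pairs $(\AA^{\alpha,\beta}(X),Z_{\alpha,\beta,s})$ at nearby parameters with the unique local lift of the continuous path of central charges through the base point. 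That identification requires comparing the double-tilted hearts (equivalently, bounding the phases of objects of $\AA^{\alpha',\beta'}(X)$ with respect to $(\AA^{\alpha,\beta}(X),Z_{\alpha,\beta,s})$ for $(\alpha',\beta',s')$ close to $(\alpha,\beta,s)$), using the structure of the tilts and statements in the spirit of Lemma \ref{lem:large_volume_limit_bridgeland}; you correctly warn against comparing hearts ``naively'', but some comparison of this kind cannot be avoided---it is the content of the cited result rather than a technicality it disposes of. As written, your argument cannot distinguish the given family from one modified by a shift or numerically trivial autoequivalence on part of the parameter space, so it is incomplete unless you either carry out that identification or simply cite \cite[Proposition 8.10]{BMS14} as the paper does.
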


In the case of tilt stability we have seen that the limiting stability for
$\alpha \to \infty$ is closely related with slope stability. The first step in
connecting Bridgeland stability with tilt stability is a similar result. For an
object $E \in \AA^{\alpha, \beta}(X)$ we denote the \textit{cohomology with respect to
the heart $\Coh^{\beta}(X)$} by $\HH_{\beta}^i(E)$. It is defined by the
property that $\HH_{\beta}^i(E)[i] \in \Coh^{\beta}(X)$ is a factor in the
Harder-Narasimhan filtration of $E$.

\begin{lem}[{\cite[Lemma 8.9]{BMS14}}]
\label{lem:large_volume_limit_bridgeland}
If $E \in \AA^{\alpha, \beta}(X)$ is $Z_{\alpha, \beta, s}$-semistable for all
$s \gg 0$, then one of the following two conditions holds.
\begin{enumerate}
  \item $E = \HH_{\beta}^0(E)$ is a $\nu_{\alpha, \beta}$-semistable object.
  \item $\HH_{\beta}^{-1}(E)$ is $\nu_{\alpha, \beta}$-semistable and
  $\HH_{\beta}^0(E)$ is a sheaf supported in dimension $0$.
\end{enumerate}
\end{lem}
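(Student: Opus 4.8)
The plan is to work with the canonical short exact sequence in the tilted heart $\AA^{\alpha,\beta}(X)$,
$$0 \to \HH_{\beta}^{-1}(E)[1] \to E \to \HH_{\beta}^{0}(E) \to 0,$$
with $\HH_\beta^{-1}(E) \in \FF_{\alpha,\beta}$ and $\HH_\beta^{0}(E) \in \TT_{\alpha,\beta}$, and to exploit the elementary asymptotics of $Z_{\alpha,\beta,s}$ as $s \to \infty$. The point is that $\Im Z_{\alpha,\beta,s} = H\cdot\ch_2^\beta - \tfrac{\alpha^2}{2}H^3\cdot\ch_0^\beta$ is independent of $s$, whereas $\Re Z_{\alpha,\beta,s}$ is affine in $s$ with leading coefficient $\alpha^2 H^2\cdot\ch_1^\beta$. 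Hence for an object $F$ with $H^2\cdot\ch_1^\beta(F)\neq 0$ the slope $\lambda_{\alpha,\beta,s}(F)$ tends to $\mp\infty$ (according to the sign of $H^2\cdot\ch_1^\beta(F)$), with leading term $-\,(\alpha^2/\nu_{\alpha,\beta}(F))\,s$, while for $H^2\cdot\ch_1^\beta(F)=0$ the slope $\lambda_{\alpha,\beta,s}(F)$ does not depend on $s$ at all. I would also record two structural facts, each a quick consequence of the Bogomolov inequality for tilt stability together with the torsion-pair decomposition of a $\Coh^\beta(X)$-object by its sheaf cohomology: (a) every nonzero object of $\FF_{\alpha,\beta}$ has $H^2\cdot\ch_1^\beta>0$; (b) an object $B\in\Coh^\beta(X)$ with $H^2\cdot\ch_1^\beta(B)=0$ and $H\cdot\ch_2^\beta(B)=\tfrac{\alpha^2}{2}H^3\cdot\ch_0^\beta(B)$ is a sheaf supported in dimension $0$.

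First I would settle the case $\HH_\beta^{-1}(E)=0$, so that $E=\HH_\beta^{0}(E)\in\TT_{\alpha,\beta}\subset\Coh^\beta(X)$, and claim $E$ is $\nu_{\alpha,\beta}$-semistable. If not, then since $E\in\TT_{\alpha,\beta}$ forces $\nu_{\min}(E)>0$, and $\nu_{\min}(E)=+\infty$ would already make $E$ $\nu$-semistable (the Harder--Narasimhan slopes being strictly decreasing), we may take the last $\nu$-HN factor $E''$ of $E$ and form a nonzero short exact sequence $0\to E'\to E\to E''\to 0$ in $\AA^{\alpha,\beta}(X)$ with $\nu_{\alpha,\beta}(E'')=\nu_{\min}(E)\in(0,\infty)$. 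If $H^2\cdot\ch_1^\beta(E')>0$, then $\lambda_{\alpha,\beta,s}(E)\leq\lambda_{\alpha,\beta,s}(E'')$ translates via the asymptotics into $\nu_{\alpha,\beta}(E)\leq\nu_{\min}(E)$, contradicting that $\nu_{\alpha,\beta}(E)$ is a genuine convex combination, with positive weights $H^2\cdot\ch_1^\beta(E')$ and $H^2\cdot\ch_1^\beta(E'')$, of $\nu_{\alpha,\beta}(E')>\nu_{\min}(E)$ and $\nu_{\min}(E)$; if $H^2\cdot\ch_1^\beta(E')=0$, then $\lambda_{\alpha,\beta,s}(E')$ does not depend on $s$ while $\lambda_{\alpha,\beta,s}(E'')\to-\infty$, contradicting $\lambda_{\alpha,\beta,s}(E')\leq\lambda_{\alpha,\beta,s}(E'')$. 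This yields possibility $(1)$.

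Next, suppose $A:=\HH_\beta^{-1}(E)\neq 0$ and set $B:=\HH_\beta^{0}(E)$ (possibly zero). By fact (a), $H^2\cdot\ch_1^\beta(A)>0$, hence $\lambda_{\alpha,\beta,s}(A[1])\to+\infty$; since $A[1]\hookrightarrow E$, semistability forces $\lambda_{\alpha,\beta,s}(E)\to+\infty$, and since $E\twoheadrightarrow B$ the same holds for $B$ when $B\neq 0$. But $B\in\Coh^\beta(X)$ gives $H^2\cdot\ch_1^\beta(B)\geq 0$, and if it were positive then $\lambda_{\alpha,\beta,s}(B)\to-\infty$, a contradiction; so $H^2\cdot\ch_1^\beta(B)=0$, whence $\lambda_{\alpha,\beta,s}(B)$ is constant in $s$ and can tend to $+\infty$ only if $\Im Z_{\alpha,\beta,s}(B)=0$. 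By fact (b), $B$ is then a sheaf supported in dimension $0$. Finally, to see $A$ is $\nu_{\alpha,\beta}$-semistable: if not, let $A_1\subset A$ be the maximal $\nu$-destabilising subobject in $\Coh^\beta(X)$, so $A_1\in\FF_{\alpha,\beta}$ is nonzero with $\nu_{\alpha,\beta}(A_1)=\nu_{\max}(A)>\nu_{\alpha,\beta}(A)$ and $A_1[1]\hookrightarrow A[1]\hookrightarrow E$ in $\AA^{\alpha,\beta}(X)$; since $\nu_{\alpha,\beta}(A)<0$ (otherwise $A$ is already $\nu$-semistable of slope $0$), one computes $H^2\cdot\ch_1^\beta(E)=-H^2\cdot\ch_1^\beta(A)<0$ and $\Im Z_{\alpha,\beta,s}(E)=-\nu_{\alpha,\beta}(A)\,H^2\cdot\ch_1^\beta(A)>0$, and then $\lambda_{\alpha,\beta,s}(A_1[1])$ grows strictly faster in $s$ than $\lambda_{\alpha,\beta,s}(E)$ because $|\nu_{\alpha,\beta}(A_1)|<|\nu_{\alpha,\beta}(A)|$, contradicting $\lambda_{\alpha,\beta,s}(A_1[1])\leq\lambda_{\alpha,\beta,s}(E)$. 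This yields possibility $(2)$.

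The step I expect to be the main obstacle is the careful handling of the \emph{boundary} objects — those with $H^2\cdot\ch_1^\beta=0$ (infinite tilt slope) and those with $\Im Z_{\alpha,\beta,s}=0$ (phase $1$, e.g. length sheaves) — for which the $s$-asymptotics degenerate from linear to constant and the naive slope comparisons break down. Establishing facts (a) and (b), checking that the various sequences of sheaf-cohomology objects are genuinely short exact in $\AA^{\alpha,\beta}(X)$ and not merely in $\Coh^\beta(X)$, and ruling out an infinite-slope Harder--Narasimhan factor in each destabilising sequence are precisely the places needing care; the remainder is the convexity-of-slopes bookkeeping sketched above.
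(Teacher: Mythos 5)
The paper does not prove this lemma at all --- it is quoted directly from \cite[Lemma 8.9]{BMS14} --- so the only comparison available is with that source, and your argument is in essence the same large-volume-limit argument used there: exploit that $\Im Z_{\alpha,\beta,s}$ is independent of $s$ while $\Re Z_{\alpha,\beta,s}$ grows linearly with coefficient $\alpha^2 H^2\cdot\ch_1^{\beta}$, and feed the torsion-pair sequence $0 \to \HH^{-1}_{\beta}(E)[1] \to E \to \HH^{0}_{\beta}(E) \to 0$ plus tilt-HN filtrations into the resulting slope asymptotics. Your proof is correct, and the delicate points you flag at the end are exactly the ones that need (and admit) verification: (i) for $B \in \TT_{\alpha,\beta}$ the implication $H^2\cdot\ch_1^{\beta}(B)>0 \Rightarrow \Im Z_{\alpha,\beta,s}(B)>0$, which closes the loophole in ``if it were positive then $\lambda_{\alpha,\beta,s}(B)\to-\infty$'' (otherwise $B$ could a priori have phase $1$); (ii) that $A_1[1]\into A[1]$ is really a monomorphism in $\AA^{\alpha,\beta}(X)$, which holds because $A/A_1$ has all tilt-HN slopes strictly below $\nu_{\max}(A)\leq 0$ and hence lies in $\FF_{\alpha,\beta}$; and (iii) the passage from the sub-versus-quotient inequality in the definition of semistability to the sub-versus-total comparisons you use, which is the seesaw property and is harmless here since the relevant imaginary parts are positive (e.g.\ $\Im Z(E/A_1[1])>0$ follows from $|\nu(A_1)|\,H^2\ch_1^{\beta}(A_1) < |\nu(A)|\,H^2\ch_1^{\beta}(A)$). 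One cosmetic slip: the sign of the divergence of $\lambda_{\alpha,\beta,s}(F)$ is governed by the sign of $\nu_{\alpha,\beta}(F)$, not of $H^2\cdot\ch_1^{\beta}(F)$, but your stated leading term $-(\alpha^2/\nu_{\alpha,\beta}(F))\,s$ is the correct one and is what you actually use.
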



\section{Stability on $\P^3$}
\label{sec:p3}

In the case of $\P^3$ more can be proven than in the general case. In this
section the connection to stability of quiver representations will be
recalled and a stability result about line bundles will be proven. It was
already shown in \cite{BMT14} that a line bundle $L$ is tilt stable if $Q^{\tilt}(L) = 0$. This condition always holds in Picard rank $1$. However, we need a slightly more refined result that holds in the special case of $\P^3$.

\begin{prop}
\label{prop:line_bundles_uniquely_stable}
Let $v = \pm \ch(\OO(n)^{\oplus m})$ for integers $n,m$ with $m > 0$. Then for any $\alpha > 0$, $\beta \in \R$, and $s > 0$ the object $\OO(n)^{\oplus m}$, or a homological shift of it, is the unique tilt semistable and
Bridgeland semistable object with Chern character $\pm v$. Moreover, in the case $m = 1$ the line bundle $\OO(n)$ is stable.
\end{prop}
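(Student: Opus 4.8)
The strategy is to establish the statement at one well-understood stability condition and propagate it with Lemma~\ref{lem:Qzero}. First I would reduce to $n=0$: tensoring by $\OO(n)$ is an autoequivalence of $\Db(\P^3)$ permuting the stability conditions in each of the two families (by a shift of $\beta$ in tilt stability, and correspondingly in Bridgeland stability) and sending $\OO$ to $\OO(n)$, so it suffices to treat $v=\pm\ch(\OO^{\oplus m})$; replacing an object by a shift I may assume the Chern character under consideration is $m\ch(\OO)$. Substituting $\ch(\OO)=(1,0,0,0)$ shows $Q^{\tilt}(\OO)=0$ and $Q_{\alpha,\beta,K}(\OO)=0$ for all $K$, and both bilinear forms have the signature $(2,\rk\Gamma-2)$ required by Lemma~\ref{lem:Qzero}.

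Next, for the tilt statement, let $E$ be tilt semistable with $\ch(E)=m\ch(\OO)$ at some $(\alpha,\beta)$ and let $E_1,\dots,E_k$ be its Jordan--H\"older factors, all of the same phase. By the Bogomolov inequality for tilt stability $Q^{\tilt}(E_i)\geq 0$, and since $Q^{\tilt}$ vanishes on $\ch(\OO)$, hence on $\ch(E)$, the convex cone and extremal ray parts of Lemma~\ref{lem:Qzero} (applied with $v=H\cdot\ch_{\leq 2}$) force each factor's class onto the ray spanned by that of $E$; thus $\ch_{\leq 2}(E_i)=r_i\ch_{\leq 2}(\OO)$ with $r_i\in\Z_{>0}$, $\sum r_i=m$, and $Q^{\tilt}(E_i)=0$. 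By the first part of Lemma~\ref{lem:Qzero} each $E_i$ is then tilt stable for all $(\alpha,\beta)$ --- the destabilizing-by-$v(F)=0$ clause being vacuous, since any object with $v(F)=0$ has infinite $\nu_{\alpha,\beta}$-slope and cannot lie on a numerical wall of a finite-slope object. The BMT inequality, valid on $\P^3$ by \cite{MacE14}, combined with $Q^{\tilt}(E_i)=0$, reduces to an inequality of the form $\beta\cdot\ch_3(E_i)\geq 0$; since $E_i$ is tilt stable for every $\beta$ this forces $\ch_3(E_i)=0$, so $\ch(E_i)=r_i\ch(\OO)$ exactly. Letting $\alpha\to\infty$ and using $\ch_0(E_i)=r_i>0$, Lemma~\ref{lem:large_volume_limit_tilt} identifies $E_i$ with a slope semistable torsion-free sheaf of vanishing discriminant with $\ch(E_i)=r_i\ch(\OO)$; the classical description of such sheaves on $\P^3$ (reflexive hull, restriction to a general plane, and the classification on $\P^2$) gives $E_i\cong\OO^{\oplus r_i}$, and stability of $E_i$ forces $r_i=1$. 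Hence every factor is $\cong\OO$, and since $\Ext^1(\OO,\OO)=H^1(\P^3,\OO)=0$ the iterated self-extension $E$ splits as $\OO^{\oplus m}$; in particular (taking $m=1$) $\OO$ has a single Jordan--H\"older factor and is tilt stable, recovering a special case of \cite{BMT14}.

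For the Bridgeland statement one runs the same argument, with two modifications. The cone argument of Lemma~\ref{lem:Qzero} is carried out directly in the full Chern lattice, with $Q=Q_{\alpha,\beta,K}$, so it already yields $\ch(E_i)=r_i\ch(\OO)$ (and $Q_{\alpha,\beta,K}(E_i)=0$, so each $E_i$ is Bridgeland stable everywhere); the $v(F)=0$ clause is now automatically vacuous, as a Bridgeland heart contains no nonzero object with $v(F)=0$. In place of the large-volume limit one passes to the quiver chamber recalled earlier in this section, where the heart $\AA^{\alpha,\beta}(\P^3)$ is equivalent to the category of representations of a quiver with relations and $\OO$ (up to shift) is a simple object: there $[E_i]=r_i[\OO]$ in $K_{\num}(\P^3)$, and since the classes of the simple objects form a $\Z$-basis, $E_i$ is an iterated self-extension of the simple object $\OO$, hence $\cong\OO^{\oplus r_i}$ by $\Ext^1(\OO,\OO)=0$; stability forces $r_i=1$, so again $E\cong\OO^{\oplus m}$ and $\OO$ itself is Bridgeland stable. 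Untwisting by $\OO(n)$ gives the proposition.

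The reductions and the vanishing $Q^{\tilt}(\OO)=0=Q_{\alpha,\beta,K}(\OO)$ are routine. I expect the main obstacle to be the identification of the Jordan--H\"older factors with $\OO^{\oplus r_i}$: one must pin down $\ch_3(E_i)$ (through the BMT inequality in the tilt case, automatically via the rank-$4$ cone argument in the Bridgeland case) and then recognize a stable object whose entire Chern character is $r\ch(\OO)$ as $\OO^{\oplus r}$ --- which rests either on the classical structure of discriminant-zero slope semistable sheaves on $\P^3$ or on the $\P^3$-specific quiver picture. A related technical point is to verify that the quiver chamber is joined to the rest of $\Stab(\P^3)$ within a region on which a single quadratic form controls the support property, so that Lemma~\ref{lem:Qzero} genuinely applies along the connecting path.
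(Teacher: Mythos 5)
Your Bridgeland half is essentially the paper's argument (classification at the quiver point of Theorem~\ref{thm:macri_p3}, then propagation via the extremal-ray and rigidity statements of Lemma~\ref{lem:Qzero}, the $v(F)=0$ clause being genuinely vacuous there), so no complaints on that side beyond the support-property patching issue you already flag. The tilt half, however, has a real gap. You dismiss the ``destabilized by an object with $v(F)=0$'' clause of Lemma~\ref{lem:Qzero} on the grounds that such objects have infinite $\nu_{\alpha,\beta}$-slope and cannot interact with a finite-slope object; but your objects do \emph{not} have finite slope everywhere: for class $\pm r\ch_{\leq 2}(\OO(n))$ the tilt slope is infinite precisely along the vertical line $\beta=n$, which is exactly where zero-dimensional sheaves (the objects with $v(F)=0$) have the same, infinite, slope and can appear as Jordan--H\"older factors. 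For the same reason your extremal-ray conclusion ``$\ch_{\leq 2}(E_i)=r_i\ch_{\leq 2}(\OO)$ with $r_i\in\Z_{>0}$'' fails at $\beta=n$, since the zero class trivially lies on the ray. This is not a decorative case: the statement is claimed for all $\beta$, the paper devotes a separate paragraph to $\beta=0$ precisely because ``skyscraper sheaves can be stable factors'' there, and your later step ``$E_i$ is stable for every $\beta$, so the BMT inequality $\beta\cdot\ch_3(E_i)\geq 0$ on both sides forces $\ch_3(E_i)=0$'' rests on crossing $\beta=n$, which you have not justified. The fix is the paper's: either Hom-vanishing between skyscrapers and the relevant objects, or openness of stability at $\beta=n$ combined with the BMT inequality on both sides to kill both the third Chern characters of the rank-nonzero factors and the skyscraper contributions.

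Separately, your identification of the factors differs from the paper in a way you should be aware of. The paper never needs to classify higher-rank slope-semistable sheaves with Chern character $r\ch(\OO)$: it sits on the hyperbola $\nu_{\alpha,\beta}(v)=0$ (e.g.\ $(\alpha,\beta)=(1,-1)$ for $n=0$), where any tilt-semistable object of this class has infinite Bridgeland slope, hence is Bridgeland semistable, and the quiver classification applies. You instead go to the large-volume limit (Lemma~\ref{lem:large_volume_limit_tilt}) and invoke the classical fact that a slope-semistable torsion-free sheaf on $\P^3$ with Chern character $r\ch(\OO)$ is $\OO^{\oplus r}$. For $r=1$ this is Lemma~\ref{lem:torsion_free_rank_one}, but for $r\geq 2$ it is a genuine theorem (triviality of semistable sheaves with vanishing discriminant and vanishing Chern classes, via restriction theorems or flatness), imported as a black box; it is true, so this is a legitimate alternative route, but it is heavier than what the paper uses and should be cited or proved if you keep this structure. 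Fixing the $\beta=n$ issue above is the essential repair; replacing the classical input by the paper's hyperbola trick would in addition make the tilt half self-contained.
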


For the proof we will need a connection between Bridgeland stability and quiver
representations. We will recall exceptional collections after
\cite{Bon90}.

\begin{defn}
\begin{enumerate}
  \item An object $E \in D^b(X)$ is called an \textit{exceptional object} if
  $\Ext^l(E,E) = 0$ for all $l \neq 0$ and $\Hom(E,E) = \mathbb{C}$.
  \item A sequence $E_0, \ldots, E_n \in D^b(X)$ of exceptional objects is a
  \textit{full exceptional collection} if $\Ext^l(E_i, E_j) = 0$ for all $l$ and
  $i > j$ and $D^b(X) = \langle E_0, \ldots, E_n \rangle$, i.e., $D^b(X)$ is
  generated by $E_0, \ldots, E_n$ through shifts and extensions.
  \item A full exceptional collection $E_0, \ldots, E_n$ is called
  \textit{strong} if additionally $\Ext^l(E_i, E_j) = 0$ for all $l \neq 0$ and
  $i < j$.
\end{enumerate}
\end{defn}

\begin{thm}[\cite{Bon90}]
\label{thm:Bondal_equivalence}
Let $E_0, \ldots, E_n$ be a strong full exceptional collection on $D^b(X)$, $E = \bigoplus E_i$,
$A = \End(E)$, and $\mod-A$ be the category of right
$A$-modules of finite rank. Then the functor
$$R\Hom(E, \cdot): D^b(X) \to D^b(\mod-A)$$
is an exact equivalence. Under this identification the $E_i$ correspond to
the indecomposable projective $A$-modules.
\end{thm}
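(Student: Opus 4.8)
The plan is to recognize the statement as the classical tilting equivalence of Bondal and to run the standard generation-and-full-faithfulness argument. First I would set $T := \bigoplus_{i=0}^n E_i$, so that $A = \End(T)$, and read the functor as the tilting functor $F := \RHom(T,-) \colon D^b(X) \to D^b(\mod-A)$, where $\RHom(T,E)$ carries the right $A$-module structure coming from precomposition with endomorphisms of $T$. The first task is to verify that $T$ is a \emph{tilting object}. The exceptional, strong, and full-exceptional hypotheses give $\Ext^l(E_i,E_j)=0$ for all $l\neq 0$: for $i<j$ this is strongness, for $i>j$ it is the defining vanishing of a full exceptional collection, and for $i=j$ it is exceptionality. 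Hence $\Hom(T,T[l])=0$ for $l\neq 0$ while $\Hom(T,T)=A$ sits in degree zero. Fullness says the $E_i$ generate $D^b(X)$ under shifts and cones, so $T$ classically generates $D^b(X)$. Finally $A$ is a finite-dimensional $\C$-algebra, and since $X$ is smooth it has finite global dimension, so that $F$ is well defined into the bounded category $D^b(\mod-A)$ and the thick subcategory generated by the free module $A$ is all of $D^b(\mod-A)$.

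Next I would exploit that $F$ has a left adjoint, namely $-\otimes^{\mathbf L}_A T$, and compute $F$ on the generator: $F(T)=\RHom(T,T)=A$, the free rank-one module in degree zero. The key numerical input is that $F$ induces an isomorphism $\Hom_{D^b(X)}(T,T[l]) \xrightarrow{\sim} \Hom_{D^b(\mod-A)}(A,A[l])$ for every $l$, since the left side is $A$ for $l=0$ and $0$ otherwise, while the right side is $H^l(A)$, again $A$ for $l=0$ and $0$ otherwise. Thus $F$ is fully faithful when restricted to the shifts $T[l]$.

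The heart of the argument is to bootstrap from the generator to the whole category. I would consider the full subcategory of those $E$ for which the natural maps $\Hom(T[k],E)\to\Hom(F(T[k]),F(E))$ are isomorphisms for all $k$; because $F$ is a triangulated functor this subcategory is closed under shifts, cones, and direct summands, and it contains $T$, hence equals $D^b(X)$ by classical generation. A second d\'evissage in the first variable upgrades this to full faithfulness of $F$ on all pairs of objects. For essential surjectivity, the image of $F$ is a thick triangulated subcategory of $D^b(\mod-A)$ containing $F(T)=A$; since $A$ classically generates $D^b(\mod-A)$ (here finite global dimension is used), the image is everything, so $F$ is an equivalence. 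For the last assertion I would compute $F(E_i)=\RHom(T,E_i)$; the vanishings above force it into degree zero, where it equals $\Hom(T,E_i)=\bigoplus_j\Hom(E_j,E_i)\cong e_iA$, with $e_i\in A$ the primitive idempotent projecting onto the summand $E_i$. As the $e_iA$ are exactly the indecomposable projective right $A$-modules, this gives the stated correspondence.

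The main obstacle is the bootstrapping step: converting the single family of isomorphisms on $\Hom(T,T[l])$ into full faithfulness on all of $D^b(X)$. This is the standard but delicate d\'evissage along the triangulated generation by $T$, where one must use that $F$ is exact and that the objects on which it is fully faithful form a thick subcategory closed under summands. By comparison the vanishing computations, the adjunction, and the identification of the $E_i$ with indecomposable projectives are bookkeeping once $T$ is known to be a tilting generator.
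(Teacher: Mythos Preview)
The paper does not give a proof of this theorem at all: it is quoted from \cite{Bon90} and used as a black box, so there is no ``paper's own proof'' to compare against. Your outline is the standard tilting argument and is essentially correct; in particular you correctly read the functor as $\RHom(T,-)$ with $T=\bigoplus_i E_i$ (the paper's notation $\RHom(A,\cdot)$ is an abuse identifying the algebra with the tilting object), and the d\'evissage for full faithfulness and the identification $F(E_i)\cong e_iA$ are exactly as in Bondal's paper.

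One point deserves more care than you give it: the claim that $A$ has finite global dimension, which you need so that $A$ classically generates $D^b(\mod\text{-}A)$ and hence so that $F$ is essentially surjective. Saying ``since $X$ is smooth it has finite global dimension'' is not yet a proof of this for $A$; the clean way is to first establish full faithfulness, then observe that for any finite-dimensional right $A$-module $M$ and any $l$, one has $\Ext^l_A(M,N)\cong \Hom_{D^b(X)}(F^{-1}M,F^{-1}N[l])$, which vanishes for $l>\dim X$ because $X$ is smooth projective. This bounds the projective dimension of every $M$, hence the global dimension of $A$, and then your essential surjectivity argument goes through. Alternatively, and this is closer to how Bondal actually argues, you can avoid the global-dimension issue entirely by working with the left adjoint $G=(-)\otimes^{\mathbf L}_A T$: one checks $G(A)\cong T$ and, by d\'evissage on the \emph{target} side starting from $A$ (using that every bounded complex of finite modules is built from finite free modules by cones and summands in $D^-(\mod\text{-}A)$, together with boundedness of $FG$), that the counit $FG\Rightarrow \id$ is an isomorphism, which combined with full faithfulness gives the equivalence without first knowing finite global dimension.
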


In particular, the category $\mod-A$ becomes the heart of a bounded t-structure on
$D^b(X)$ with this identification. In the case of $\P^3$ this heart can be connected
to some stability conditions. In the following statement $T$ is the tangent bundle on $\P^3$.

\begin{thm}[\cite{MacE14}]
\label{thm:macri_p3}
If $\alpha < 1/3$ and $\beta \in (-2/3, 0]$, then
$$\mathcal{C} := \langle \mathcal{O}(-1)[3], T(-2)[2], \mathcal{O}[1],
\mathcal{O}(1) \rangle = P_{\alpha, \beta}((\phi, \phi + 1])$$
for some $\phi \in (0,1)$ and the Bridgeland stability condition $(P_{\alpha,
\beta}, Z_{\alpha, \beta, s})$ for small enough $s > 0$.
Moreover, $\mathcal{C}$ is the category $\mod-A$ for some finite dimensional
algebra $A$ coming from an exceptional collection as in Theorem
\ref{thm:Bondal_equivalence}. The four objects generating $\mathcal{C}$
correspond to the simple representations.
\end{thm}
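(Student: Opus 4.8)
The plan is to build the category $\mathcal{C}$ abstractly from Bondal's equivalence, check that its four simple objects are $\sigma$-semistable for $\sigma=(P_{\alpha,\beta},Z_{\alpha,\beta,\varepsilon})$ in the stated range with phases inside a single length-one window, and then invoke the fact that a heart of a bounded t-structure admits no heart strictly containing it.

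First I would produce $\mathcal{C}$ algebraically. A suitable twist of the Beilinson collection $\OO,\OO(1),\OO(2),\OO(3)$ on $\P^3$ is a strong full exceptional collection of line bundles $E_0,\dots,E_3$: the required $\Ext$-vanishings reduce to $H^k(\OO(m))=0$ for the relevant $k,m$. By Theorem \ref{thm:Bondal_equivalence}, with $A=\End(\bigoplus_i E_i)$ the category $\mod\text{-}A$ is a heart of a bounded t-structure on $D^b(\P^3)$ under $R\Hom(A,-)$, with the $E_i$ the indecomposable projectives. The four objects $\OO(-1)[3],\ T(-2)[2],\ \OO[1],\ \OO(1)$ are the simple $A$-modules, i.e. the right dual collection to the $E_i$; this is the computation encoded by the Beilinson resolution of the diagonal, and here $T(-2)=\Omega^2(2)$ is the expected cotangent term. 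Because $A$ is finite dimensional every module has a finite composition series, so $\mathcal{C}:=\mod\text{-}A$ is the extension closure $\langle\OO(-1)[3],T(-2)[2],\OO[1],\OO(1)\rangle$. This already yields both ``moreover'' assertions once the identification $\mathcal{C}=P_{\alpha,\beta}((\phi,\phi+1])$ is in place.

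Next I would reduce the equality of categories to semistability plus a phase estimate. Suppose each simple $S\in\{\OO(-1)[3],T(-2)[2],\OO[1],\OO(1)\}$ is $\sigma$-semistable with phase $\phi(S)$ inside one half-open interval $(\phi,\phi+1]$, $\phi\in(0,1)$. Then each $S$ lies in $P_{\alpha,\beta}((\phi,\phi+1])$, and since this slice is closed under extensions it contains the extension closure $\mathcal{C}$; as both are hearts of bounded t-structures the inclusion forces equality. The simples of $\mathcal{C}=P_{\alpha,\beta}((\phi,\phi+1])$ then have no proper subobjects in the heart and are therefore automatically $\sigma$-stable, giving the last sentence. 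To locate the window I would compute $Z_{\alpha,\beta,\varepsilon}$ on $\ch(\OO(-1))=(1,-1,\tfrac12,-\tfrac16)$, $\ch(T(-2))=(3,-2,0,\tfrac23)$, $\ch(\OO)=(1,0,0,0)$, $\ch(\OO(1))=(1,1,\tfrac12,\tfrac16)$ and let $\varepsilon\to 0$; in this regime the four phases fall into the order $\phi(\OO(1))<\phi(T(-2)[2])<\phi(\OO(-1)[3])<\phi(\OO[1])$ and span an interval of length strictly below one exactly when $(\alpha,\beta)$ lies in the stated region, which is what selects the constraints $\alpha<1/3$ and $\beta\in(-2/3,0]$.

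Finally comes the semistability of the four generators, the heart of the matter. The three line bundles are soft: each is slope stable with $Q^{\tilt}(\OO(n))=0$ and, by a short expansion, satisfies the BMT inequality with equality, so $Q_{\alpha,\beta,K}(\OO(n))=0$ for every $K$ because the $Q^{\tilt}$-term drops out. They are $\nu_{\alpha,\beta}$-stable and, by Lemma \ref{lem:large_volume_limit_bridgeland}, Bridgeland stable in the large-volume limit; since they lie on $\{Q=0\}$ and no nonzero object of $D^b(\P^3)$ has vanishing Chern character, Lemma \ref{lem:Qzero} propagates this stability along a path into the chamber with no possible destabilizing $v(F)=0$ factor. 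The one genuinely hard case is $T(-2)$, the only generator with $Q^{\tilt}(T(-2))=4\neq 0$, so that Lemma \ref{lem:Qzero} is unavailable. For $\beta\in(-2/3,0]$ it lies in $\FF_\beta$, so $T(-2)[1]\in\Coh^\beta(\P^3)$ and a direct check places it in $\TT_{\alpha,\beta}$, hence in the Bridgeland heart. I expect the main obstacle to be excluding walls for this object throughout $\{\alpha\in(0,1/3),\ \beta\in(-2/3,0]\}$ and along the path in $s$ from the large-volume limit down to small $\varepsilon$. Here I would use Lemma \ref{lem:discriminant_properties}: any destabilizing sequence $0\to F\to T(-2)[1]\to G\to 0$ forces $Q^{\tilt}(F)+Q^{\tilt}(G)\le 4$ with both terms nonnegative, which together with the Structure Theorem for Walls in Tilt Stability and the corresponding bound for $Q_{\alpha,\beta,K}$ leaves only finitely many numerical candidates; the Euler sequence $0\to\OO(-2)[1]\to\OO(-1)^{\oplus4}[1]\to T(-2)[1]\to 0$ in $\Coh^\beta(\P^3)$ then lets me rule these out or push their walls outside the region. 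Establishing this exclusion is the crux on which the whole argument rests.
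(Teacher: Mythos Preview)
The paper does not prove this statement; it is quoted from \cite{MacE14} and used as a black box. So there is no in-paper proof to compare against, and the relevant comparison is to Macr\`i's original argument.

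Your strategy---show each simple of $\CC$ is semistable for the BMT stability condition, check the four phases fit into a single length-one window, then conclude the hearts coincide---is logically sound but runs in the opposite direction to \cite{MacE14} and is considerably harder. Macr\`i's argument starts from the quiver side: $\CC=\mod-A$ is already a finite-length heart by Theorem~\ref{thm:Bondal_equivalence}, so one only has to verify that $Z_{\alpha,\beta,\varepsilon}$ sends each of the four simple objects into the semi-closed upper half-plane. That is a computation with four Chern characters and nothing more; the constraints $\alpha<1/3$, $\beta\in(-2/3,0]$, and $\varepsilon$ small are exactly what force these four values into one half-plane. Since $\CC$ has finite length, Harder--Narasimhan filtrations come for free and $(\CC,Z_{\alpha,\beta,\varepsilon})$ is a Bridgeland stability condition directly. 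The identification with the double-tilt construction is then made by deformation inside $\Stab(\P^3)$. In fact, in \cite{MacE14} this quiver construction is logically \emph{prior} to the BMT inequality, which is deduced from it; so invoking the BMT support property via Lemma~\ref{lem:Qzero}, as you do for the line bundles, would be circular in that setting (it is not circular in the present paper, which takes BMT as given, but it is still the long way around).

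The practical payoff of the direct approach is that it eliminates precisely the difficulty you flag. Once $(\CC,Z)$ is a stability condition, the four generators are stable because they are simple objects of a finite-length heart; no wall-exclusion argument for $T(-2)$ is required. Your plan for $T(-2)$---bound $Q^{\tilt}(F)+Q^{\tilt}(G)\le 4$ via Lemma~\ref{lem:discriminant_properties} and use the Euler resolution to rule out the survivors---is reasonable, but as you yourself say it is the crux and you have not carried it out. As written, that step is a genuine gap.
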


We will only require the following corollary of this statement.

\begin{cor}
\label{cor:semistable_powers_structure_sheaf}
An object $E \in \AA^{\alpha, \beta}(\P^3)$ with $\ch(E) = \pm (m, 0, 0, 0)$ is $\lambda_{\alpha, \beta, s}$-semistable for $\alpha < 1/3$, $\beta \in (-2/3, 0]$, and $s > 0$ small enough if and only $E \cong \OO^{\oplus m}$ or $\cong \OO^{\oplus m}[1]$.
\end{cor}

\begin{proof}
Let $\CC$ be the category defined in Theorem \ref{thm:macri_p3}. Then $E \in \AA^{\alpha, \beta}(\P^3)$ being $\lambda_{\alpha, \beta, s}$-semistable implies that $E \in \CC$ or $E[1] \in \CC$. By definition of $\CC$, there are four integers $a,b,c,d$, all non-negative or all non-positive, such that 
\[
\ch(E) = a \ch(\OO(-1)[3]) + b \ch(T(-2)[2]) + c \ch(\OO[1]) + d \ch(\OO(1)).
\]
A straightforward computation shows that $a = b = d = 0$ and $c = \pm m$ is the only possibility. Since $\OO(-1)[3]$, $T(-2)[2]$, $\OO[1]$ and $\OO(1)$ are the simple object in the finite length category $\CC$, we must have $E \cong \OO^{\oplus m}$ or $E \cong \OO^{\oplus m}[1]$.

Vice versa, $\OO^{\oplus m}[1]$ is a direct sum of simple objects in $\CC$. This means it has to be semistable irregardless of the stability condition.
\end{proof}

\begin{proof}[Proof of Proposition \ref{prop:line_bundles_uniquely_stable}]
By definition $\ch^{\beta + a}(E \otimes \OO(a)) = \ch^{\beta}(E)$ for any $a \in \Z$. It follows directly from the definitions that an object $E$ is $\nu_{\alpha, \beta}$-semistable if and only if $E \otimes \OO(a)$ is $\nu_{\alpha, \beta + a}$-semistable. Similarly, $E$ is $\lambda_{\alpha, \beta, s}$-semistable if and only if $E \otimes \OO(a)$ is $\lambda_{\alpha, \beta + a, s}$-semistable. In particular, we can tensor with $\OO(-n)$ to reduce the statement to the case $n = 0$, and $v = \pm (m,0,0,0)$.

Corollary \ref{cor:semistable_powers_structure_sheaf} implies the statement for some $\alpha$, $\beta$, $s$ in Bridgeland stability. Next, we will extend this to all $\alpha$, $\beta$, $s$ in Bridgeland stability. Notice that
$Q_{\alpha, \beta, K}(v) = 0$. By Lemma \ref{lem:Qzero} the object $\OO$ is
Bridgeland stable for all $\alpha$, $\beta$, $s$. Let $E \in \AA^{\alpha, \beta}(\P^3)$
be $Z_{\alpha, \beta, s}$-semistable with $\ch(E) = v$. By Lemma
\ref{lem:Qzero}, the class $v$ spans an extremal ray of the cone $\CC^+ =
Z^{-1}_{\alpha, \beta, s}(\R_{\geq 0} v) \cap \{ Q_{\alpha, \beta, K} \geq 0
\}$. In particular, that means all its Jordan-H\"older factors are scalar
multiples of $v$. If $m=1$, then $v$ is primitive.
Therefore, $E$ is actually stable and then $E$ is also stable for $\alpha =
\tfrac{1}{4}$ and $\beta = 0$, i.e., $E$ is $\OO$ or a shift of it. Assume $m >
1$. Since there are no stable objects with class $v$ at $\alpha = \tfrac{1}{4}$
and $\beta = 0$, Lemma \ref{lem:Qzero} implies that $E$ is strictly semistable.
Therefore, the case $m=1$ implies that all the Jordan-H\"older factors are $\OO$.

The next step is to show semistability of $\OO^{\oplus m}$ in tilt stability. For this,
we just need deal with $m=1$. We have $Q^{\tilt}(\OO) = 0$. By Lemma
\ref{lem:Qzero} we know that $\OO$ is tilt stable everywhere or nowhere unless
it is destabilized by an object supported in dimension $0$. In that case $\beta
= 0$ is a wall. However, that cannot happen since there are no morphism from or
to $\OO[1]$ for any skyscraper sheaf. 
Since $v$ is primitive, semistability of $\OO$ is equivalent to stability. For $\beta = 0$ and $\alpha \gg 0$ we know that $\OO$ is semistable due to Lemma \ref{lem:large_volume_limit_tilt}.

Now we will show that any tilt semistable object $E$ with $\ch(E) = v$ has to be
$\OO^{\oplus m}$ for $\alpha = 1$, $\beta = -1$. We have $\nu_{1, -1}(E) = 0$. Therefore,
$E[1]$ is in the category $\AA^{1, -1}(\P^3)$. The Bridgeland slope is
$\lambda_{1,-1,s}(E[1]) = \infty$ independently of $s$. This means $E$ is
Bridgeland semistable and by the previous argument $E \cong \OO^{\oplus m}$.

We will use $Q^{\tilt}(v) = 0$ and Lemma \ref{lem:Qzero} similarly as in the
Bridgeland stability case to extend it to all of tilt stability. We start with the case $\beta < 0$. Let $E \in \Coh^{\beta}(\P^3)$ be a tilt semistable
object with $\ch(E) = v$. By using Lemma \ref{lem:Qzero}, the class $v$ spans
an extremal ray of the cone $\CC^+ = (Z_{\alpha, \beta}^{\tilt})^{-1}(\R_{\geq
0} v) \cap \{ Q^{\tilt} \geq 0 \}$. In particular, that means all its stable factors have
Chern character $(1,0,0,e)$. The BMT inequality shows $e \leq 0$. But since all
the stable factors add up to $v$, this means $e = 0$. Therefore, we reduced to the
case $m = 1$. In this case Lemma \ref{lem:Qzero} does the job as before.

If $\beta = 0$, the situation is more involved, since skyscraper sheaves can be
stable factors. All stable factors have Chern characters
of the form $(-1,0,0,e)$ or $(0,0,0,f)$. In this case $f \geq 0$. Let $F$ be
such a stable factor with Chern character $(-1,0,0,e)$. By openness of stability
$F$ is stable in a whole neighborhood that includes points with $\beta < 0$ and
$\beta > 0$. The BMT-inequality in both cases together implies $e = 0$. But then
$f = 0$ follows from the fact that Chern characters are additive. Again we
reduced to the case $m = 1$. By openness of stability and the result for $\beta
< 0$ we are done with this case. The case $\beta > 0$ can now be handled in the
same way as $\beta < 0$ by using Lemma \ref{lem:Qzero} again.
\end{proof}

In the case of tilt stability there is an even stronger statement. If
$\beta > n$, we do not need to fix $\ch_3$ to get the same conclusion.

\begin{prop}
\label{prop:line_bundles_tilt_slope_positive}
Let $v = -\ch_{\leq 2}(\OO(n)^{\oplus m})$ for integers $n,m$ with $m > 0$.
Then $\OO(n)^{\oplus m}[1]$ is the unique tilt semistable object $E$ with
$\ch_{\leq 2}(E) = v$ for any $\alpha > 0$ and $\beta > n$.
\begin{proof}
The semistability of $\OO(n)^{\oplus m}[1]$ has already been shown in
Proposition \ref{prop:line_bundles_uniquely_stable}. As in the previous proof,
we can use tensoring by $\OO(-n)$ to reduce to the case $n = 0$. This means $v =
(m, 0, 0)$.

Let $E \in \Coh^{\beta}(\P^3)$ be a tilt stable object for some $\alpha > 0$ and
$\beta > 0$ with $\ch(E) = (-m, 0, 0, e)$. The BMT-inequality implies $e \geq
0$. Since $Q^{\tilt}(E) = 0$, we can use Lemma \ref{lem:Qzero} to get that $E$ is
tilt stable for all $\beta > 0$. If $E$ is also stable for $\beta = 0$, then
using the BMT-inequality for $\beta < 0$ implies $e = 0$. Assume $E$ becomes
strictly semistable at $\beta = 0$. By Lemma \ref{lem:Qzero} the class $v$ spans
an extremal ray of the cone $\CC^+ = (Z_{\alpha, \beta}^{\tilt})^{-1}(\R_{\geq
0} v) \cap \{ Q^{\tilt} \geq 0 \}$. That means all stable factors must have
Chern characters of the form $(-m',0,0,e')$ for some $0 \leq m' \leq m$. If $m'
\neq 0$ then using the BMT-inequality for both $\beta < 0$ and $\beta > 0$
implies $e' = 0$. If $m' = 0$, then $e' > 0$. However, all the third Chern
characters add up to the non positive number $e$. This is only possible if $e = e' = 0$
and no stable factor has $m'=0$. By Proposition
\ref{prop:line_bundles_uniquely_stable} this means $E \cong \OO[1]^m$ and since
$E$ is stable this is only possible if $m=1$.

Let $E \in \Coh^{\beta}(\P^3)$ be a strictly tilt semistable object for some
$\alpha > 0$ and $\beta > 0$ with $\ch_{\leq 2}(E) = (-m, 0, 0)$. Since
$Q^{\tilt}(E) = 0$, we can use Lemma \ref{lem:Qzero} again to get that all
stable factors $F$ have $\ch_{\leq 2}(F) = (-m', 0, 0)$ for some $m' > 0$. By the previous part of the proof this means $m' = 1$ and $F \cong \OO[1]$ finishes the proof.
\end{proof}
\end{prop}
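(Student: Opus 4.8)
The plan is to run the argument of Proposition~\ref{prop:line_bundles_uniquely_stable} again, the new point being that the hypothesis $\beta > n$ lets one recover $\ch_3$ \emph{a posteriori} from the BMT inequality (a theorem on $\P^3$, \cite{MacE14}) rather than assuming it. Semistability of $\OO(n)^{\oplus m}[1]$ is already known, so only uniqueness is at issue. Tensoring by $\OO(-n)$ is an autoequivalence compatible with tilt stability that shifts $\beta \mapsto \beta - n$, so we may assume $n = 0$, $v = (-m,0,0)$ and $\beta > 0$. The key observation is that $Q^{\tilt}(E) = (H^2\cdot\ch_1)^2 - 2(H^3\cdot\ch_0)(H\cdot\ch_2)$ depends only on $\ch_{\leq 2}(E) = v$ and vanishes, so $E$ lies on the boundary of the Bogomolov cone and Lemma~\ref{lem:Qzero} is available throughout $\{\alpha > 0\}$.

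I would first treat the case that $E$ is tilt \emph{stable} at some $(\alpha_0,\beta_0)$ with $\beta_0 > 0$; write $\ch(E) = (-m,0,0,e)$. A short computation shows that, for whichever shift of $E$ lies in $\Coh^\beta(\P^3)$, the BMT inequality at $(\alpha,\beta)$ amounts to $\beta e \leq 0$; at $\beta_0 > 0$ this gives $e \leq 0$. Since $Q^{\tilt}(E) = 0$, Lemma~\ref{lem:Qzero} forbids $E$ from being destabilized anywhere in $\{\alpha > 0\}$ except by an object $F$ with $v(F) = 0$, i.e.\ a zero-dimensional sheaf; such an $F$ has infinite tilt slope, and a check like the one in the proof of Proposition~\ref{prop:line_bundles_uniquely_stable} (no skyscraper admits a destabilizing map to or from the relevant shift of $E$) shows this cannot happen for $\beta \neq 0$. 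Hence $E$ stays tilt semistable across $\beta = 0$ — the shift placing it in $\Coh^\beta(\P^3)$ changing from $E$ for $\beta > 0$ to $E[-1]$ for $\beta < 0$ — and applying the BMT inequality at some $\beta < 0$ gives $e \geq 0$. Therefore $e = 0$, so $\ch(E) = -\ch(\OO^{\oplus m})$, and Proposition~\ref{prop:line_bundles_uniquely_stable} gives $E \cong \OO^{\oplus m}[1]$; since $E$ is stable, $m = 1$ and $E \cong \OO[1]$.

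The general case follows quickly. Let $E$ be tilt semistable at $(\alpha_0,\beta_0)$, $\beta_0 > 0$, with $\ch_{\leq 2}(E) = (-m,0,0)$, and let $F_1,\dots,F_k$ be its stable factors there. They share the \emph{finite} slope $\nu_{\alpha_0,\beta_0}(E)$, so none is zero-dimensional; since $Q^{\tilt}(v) = 0$, the last two bullets of Lemma~\ref{lem:Qzero} show that $v$ generates an extremal ray of the convex cone $\CC^+ = (Z^{\tilt}_{\alpha_0,\beta_0})^{-1}(\R_{\geq 0}v)\cap\{Q^{\tilt}\geq 0\}$, so from $\sum v(F_i) = v$ with each $v(F_i)\in\CC^+$ one gets that each $v(F_i)$ is a positive multiple of $v$; thus $\ch_{\leq 2}(F_i) = (-m_i,0,0)$ with $m_i$ a positive integer and $\sum m_i = m$. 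By the stable case each $F_i \cong \OO[1]$, so $E$ carries a filtration with all quotients $\OO[1]$; since $\Ext^1_{D^b(\P^3)}(\OO[1],\OO[1]) = H^1(\OO_{\P^3}) = 0$, the filtration splits and $E \cong \OO[1]^{\oplus m}$, as claimed.

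I expect the delicate step to be the behaviour at $\beta = 0$ (equivalently $\beta = n$): controlling the possible zero-dimensional destabilizer there is exactly what $Q^{\tilt}(E) = 0$ together with Lemma~\ref{lem:Qzero} makes possible. This is also where the hypothesis $\beta > n$ is indispensable — it puts us on the side of the Bogomolov wall where $\OO(n)^{\oplus m}[1]$, rather than $\OO(n)^{\oplus m}$, is the object under study, so that passing from $\beta > n$ to $\beta < n$ changes the direction of the BMT constraint on $\ch_3$ and pins it to $0$ — and, crucially, we use that the BMT inequality is a theorem on $\P^3$.
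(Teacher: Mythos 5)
Your overall route is the same as the paper's (reduce to $n=0$, use $Q^{\tilt}(v)=0$ together with Lemma \ref{lem:Qzero} and the BMT inequality on both sides of $\beta=0$ to pin down $\ch_3$, then handle strictly semistable objects through their stable factors), but there is a genuine gap at the step you yourself flag as delicate: the assertion that a stable $E$ with $\ch(E)=(-m,0,0,e)$ ``stays tilt semistable across $\beta=0$.'' Lemma \ref{lem:Qzero} explicitly does \emph{not} control destabilization by objects $F$ with $v(F)=0$, and the Hom-vanishing check you invoke ``as in Proposition \ref{prop:line_bundles_uniquely_stable}'' is not available here: in that proposition the object was the explicit sheaf $\OO$, for which $\Hom(T,\OO[1])=\Hom(\OO[1],T)=0$ can be computed, whereas now $E$ is the unknown object whose identification is the point of the proof. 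Moreover, tilt stability of $E$ at $\beta_0>0$ does not exclude a zero-dimensional quotient $E\onto T$ in $\Coh^{\beta_0}(\P^3)$, since quotients of infinite slope are compatible with stability; and if such a quotient exists (equivalently, if skyscrapers occur among the Jordan--H\"older factors at the vertical wall $\beta=0$), then no shift of $E$ lies in $\Coh^{\beta}(\P^3)$ for $\beta<0$, so the BMT inequality simply cannot be applied on the other side to get $e\geq 0$. Saying the destabilization ``cannot happen for $\beta\neq 0$'' does not address this, because the danger sits exactly at $\beta=0$.

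The paper closes precisely this hole by arguing \emph{at} $\beta=0$: either $E$ is still stable there, in which case openness of stability pushes it to some $\beta<0$ and BMT gives $e\geq 0$; or $E$ becomes strictly semistable at $\beta=0$, and then one analyzes the stable factors. By the extremal-ray part of Lemma \ref{lem:Qzero} these have $\ch$ of the form $(-m',0,0,e')$ or $(0,0,0,f)$; the rank-bearing factors are stable in a neighborhood of $\beta=0$, so BMT on both sides forces $e'=0$, while skyscraper factors have $f>0$; since the third Chern characters add up to $e\leq 0$, one concludes $e=0$ and that no skyscraper factors occur, and only then does Proposition \ref{prop:line_bundles_uniquely_stable} apply. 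Your proof needs this (or an equivalent) bookkeeping argument; the rest of your write-up, including the BMT computation $\beta e\leq 0$ and the final splitting via $\Ext^1(\OO[1],\OO[1])=0$ in the strictly semistable case, is fine and matches the paper.
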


Note that a version of this proposition is already known even without assuming $X = \P^3$. However, the proof in our case is much simpler. In the proof of \cite[Proposition 3.12]{BMS14} it is shown that the numerical condition $Q^{\tilt}(E) = 0$ together with the fact that $E$ is $\nu_{\alpha, \beta}$-semistable for $\beta > n$ implies that $E$ is the homological shift of a vector bundle. They use previous results from \cite{LM16}. From there the classical result \cite[Theorem 2]{Sim92} can finish the proof.

We end this section by recalling a basic characterization of ideal sheaves in
$\P^k$.

\begin{lem}
\label{lem:torsion_free_rank_one}
Let $E \in \Coh(\P^k)$ be torsion free of rank one and $\ch_1(E) = 0$.
Then either $E \cong \OO$ or there is a subscheme $Z \subset \P^k$ of
codimension at least two such that $E \cong \II_Z$.
\begin{proof}
We have the inclusion $E \into E^{\vee \vee}$. The sheaf $E^{\vee \vee}$ is
reflexive of rank one, i.e., locally free (see \cite[Chapter 1]{Har80} for basic
properties of reflexive sheaves). Due to $\ch_1(E) = 0$ and $\rk(E) = 1$, we get
$ E^{\vee \vee} \cong \OO$. Therefore, either $E \cong \OO$ or there is a
subscheme $Z \subset \P^k$ such that $E \cong \II_Z$. If $Z$ were not of
codimension at least two, then $c_1(E) \neq 0$.
\end{proof}
\end{lem}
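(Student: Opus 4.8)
The plan is to recognize $E$ as an untwisted ideal sheaf by passing to its double dual and exploiting $\Pic(\P^k) = \Z$. First I would form the evaluation morphism $E \into E^{\vee\vee}$; it is injective because its kernel is the torsion subsheaf of $E$, which vanishes by hypothesis. The sheaf $E^{\vee\vee}$ is reflexive of rank one, and on the smooth (hence locally factorial) variety $\P^k$ a rank-one reflexive sheaf is a line bundle by standard properties of reflexive sheaves (see \cite[Chapter 1]{Har80}). Since $\Pic(\P^k) = \Z$ is generated by $\OO(1)$, this gives $E^{\vee\vee} \cong \OO(d)$ for some $d \in \Z$.

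The next step is to pin down $d = 0$. Let $Q$ denote the cokernel of $E \into E^{\vee\vee}$; it is a torsion sheaf, since $E$ and $E^{\vee\vee}$ both have rank one. In fact $Q$ is supported in codimension at least two: at any point of codimension one the local ring of $\P^k$ is a discrete valuation ring, over which a finitely generated torsion-free module of rank one is free, so $E$ already coincides with its double dual there. Hence $\ch_1(Q) = 0$, and from the short exact sequence $0 \to E \to E^{\vee\vee} \to Q \to 0$ we obtain $\ch_1(\OO(d)) = \ch_1(E) + \ch_1(Q) = 0$, which forces $d = 0$ and $E^{\vee\vee} \cong \OO$.

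Finally, the inclusion $E \into \OO$ exhibits $E$ as a nonzero ideal sheaf, so $E \cong \II_Z$ for the closed subscheme $Z \subset \P^k$ with $\OO_Z \cong Q$. If $Q = 0$ then $E \cong \OO$; otherwise $Z$ is nonempty and of codimension at least two, either by the discrete-valuation-ring observation above or directly because a codimension-one component of $Z$ would contribute a nonzero effective class to $\ch_1(E)$, contradicting $\ch_1(E) = 0$.

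I do not expect a serious obstacle: the only non-formal ingredients are the two standard facts that a rank-one reflexive sheaf on a smooth variety is invertible and that altering a sheaf only along a locus of codimension at least two leaves $\ch_1$ unchanged. Once these are in place, the remainder is bookkeeping with the sequence $0 \to E \to E^{\vee\vee} \to Q \to 0$ and the Picard group of projective space.
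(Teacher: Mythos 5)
Your proposal is correct and follows essentially the same route as the paper: pass to the double dual, use that a rank-one reflexive sheaf on $\P^k$ is a line bundle, identify it as $\OO$ from $\ch_1(E)=0$, and read off $E$ as an ideal sheaf of a codimension-two subscheme. You simply make explicit the steps the paper leaves implicit (injectivity of $E \into E^{\vee\vee}$, the codimension-two support of the cokernel, and the Picard-group bookkeeping forcing $d=0$).
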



\section{Examples in Tilt Stability}
\label{sec:tilt_examples}

In examples, techniques from the last two sections can be used to determine
walls in tilt stability. This is similar to work on surfaces as done in various
articles (\cite{ABCH13, BM14, CHW14, LZ13, MM13, Nue14, Woo13, YY14}). We will showcase this for some
cases in $\P^3$. For any $v \in K_{\num}(X)$ we denote the set
of tilt semistable objects with Chern character $\pm v$ for some $\alpha > 0$
and $\beta \in \R$ by $M^{\tilt}_{\alpha, \beta}(v)$.

\subsection{Certain Sheaves}

Let $m,n \in \Z$ be integers with $n < m$ and $i,j \in \N$ positive integers. We
define a class $v = i \ch(\OO_{\P^3}(m)) - j \ch(\OO_{\P^3}(n))$. In this section we study
walls for this class $v$ in tilt stability. Interesting examples of sheaves with
this Chern character are ideal sheaves of complete intersections of two surfaces
of the same degree, ideal sheaves of twisted cubics, or the tangent bundle. In this generality we
will determine the smallest wall in tilt stability on one side of the vertical
wall.

\begin{thm}
\label{thm:smallestWall}
A wall not containing any smaller wall in tilt-stability for objects with class $v$ is
given by the equation $\alpha^2 + (\beta - \tfrac{m+n}{2})^2 =
(\tfrac{m-n}{2})^2$. All semistable objects $E$ at the wall are given by
extensions of the form $0 \to \OO(m)^{\oplus i} \to E \to \OO(n)^{\oplus j}[1]
\to 0$. Moreover, there are no tilt semistable objects of class $v$ inside this semicircle.
\begin{proof}
The semicircle defined by $Q_{\alpha, \beta, 1}(v) = 0$ coincides
with the wall claimed to exist. Therefore, the BMT-inequality implies that no
smaller semicircle can be a wall. Moreover, Proposition
\ref{prop:line_bundles_uniquely_stable} shows that both $\OO(m)^{\oplus i}$ and
$\OO(n)^{\oplus j}[1]$ are tilt semistable. The equation
$\nu_{\alpha, \beta}(\OO(m)) = \nu_{\alpha, \beta}(\OO(n))$ is exactly the
equation $\alpha^2 + (\beta - \tfrac{m+n}{2})^2 = (\tfrac{m-n}{2})^2$.
Therefore, we are left to prove the second assertion.

Let $F$ be a stable factor of $E$ at the wall. By Lemma \ref{lem:discriminant_properties} and Remark \ref{rmk:bigger_lattice} we get $Q_{\alpha, \beta, 1}(F) = 0$ at the wall. Since $F$ is stable, it is stable in a whole neighborhood around
the wall. But $Q_{\alpha, \beta, 1}(F)$ will be negative on one side of the wall
unless $Q_{\alpha, \beta, 1}(F) = 0$ for all $\alpha$, $\beta$. Taking the limit
$\alpha \to \infty$ implies $Q^{\tilt}(F) = 0$.

Assume that $\ch(F) = (r,c,d,e)$. Then $Q^{\tilt}(F) = 0$ implies $c^2 - 2rd = 0$. If $r = 0$, then $c = 0$. That cannot happen, because the wall would be a vertical line and not a semicircle in that situation. Thus, we can assume $r \neq 0$. In particular, the equality $d = \tfrac{c^2}{2r}$ holds. The point $\alpha_0 = \tfrac{m-n}{2}$, $\beta_0 = \tfrac{m+n}{2}$ lies on the wall. Since $F$ and $E$ have the same slope at $(\alpha_0, \beta_0)$, a straightforward but lengthy computation shows $c = mr$ or $c = nr$. That means $\ch(F)$ is a multiple of the Chern character of either $\OO(m)$ or $\OO(n)$. Since $F$ was assumed to be stable, Proposition \ref{prop:line_bundles_uniquely_stable} shows that $F$ has to be one of those line bundles.

Since the Chern characters of these two lines bundles are linearly independent,
we know that any decomposition of $E$ into stable factors must contain $i$ times
$\OO(m)$ and $j$ times $\OO(n)[1]$. The proof can be finished by the fact that
$\Ext^1(\OO(m), \OO(n)[1]) = 0$.
\end{proof}
\end{thm}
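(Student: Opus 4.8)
The plan is to dispatch the three claims together, since the BMT inequality (known on $\P^3$) controls both the location of the innermost wall and the structure of objects on it. First I would expand the relevant quadratic forms: a direct computation gives $Q^{\tilt}(v) = ij(m-n)^2 > 0$, and shows that the locus $\{Q_{\alpha,\beta,1}(v) = 0\}$ is exactly the semicircle $\alpha^2 + (\beta - \tfrac{m+n}{2})^2 = (\tfrac{m-n}{2})^2$. Because $K = 1$ makes $Q_{\alpha,\beta,1}$ the form appearing in the BMT inequality and $Q^{\tilt}(v) > 0$, the function $(\alpha,\beta)\mapsto Q_{\alpha,\beta,1}(v)$ is strictly negative in the interior of this disc, so no object of Chern character $\pm v$ is $\nu_{\alpha,\beta}$-semistable strictly inside it. This proves the last assertion, and since walls only occur where semistable objects exist, it also shows that no wall for $v$ is nested inside.

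Next I would verify the semicircle is an actual wall. By Proposition~\ref{prop:line_bundles_uniquely_stable} the objects $\OO(m)^{\oplus i}$ and $\OO(n)^{\oplus j}[1]$ are tilt semistable; for $\beta$ in the interval $(n,m)$ — the $\beta$-range of the semicircle — one has $\OO(m) \in \TT_\beta$ and $\OO(n) \in \FF_\beta$, so both objects, hence any extension $0 \to \OO(m)^{\oplus i} \to E \to \OO(n)^{\oplus j}[1] \to 0$, lie in $\Coh^\beta(\P^3)$ and have $\ch(E) = v$. A short computation identifies $\{\nu_{\alpha,\beta}(\OO(m)) = \nu_{\alpha,\beta}(\OO(n))\}$ with the claimed semicircle, so on it this sequence has equal slopes; already the split extension shows the numerical wall is an actual wall.

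The main work is classifying the tilt semistable $E$ with $\ch(E) = v$ at the wall. I would take a Jordan--H\"older filtration at a point of the wall and analyze a stable factor $F$. The squeeze $0 \le \sum_k Q_{\alpha,\beta,1}(F_k) \le Q_{\alpha,\beta,1}(E) = 0$ from Lemma~\ref{lem:discriminant_properties} (invoked via Remark~\ref{rmk:bigger_lattice}) forces $Q_{\alpha,\beta,1}(F) = 0$ there, while BMT together with openness of stability makes $Q_{\alpha,\beta,1}(F)$ nonnegative on a neighborhood; since this is a quadric of the special shape $Q^{\tilt}(F)(\alpha^2+\beta^2) + (\cdots)\beta + (\cdots)$, it must vanish identically, and letting $\alpha \to \infty$ gives $Q^{\tilt}(F) = 0$ as well. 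Writing $\ch(F) = (r,c,d,e)$, then $c^2 = 2rd$; the case $r = 0$ is impossible, as it would force $c = 0$ and make the wall vertical, and for $r \ne 0$ the identical vanishing of $Q_{\alpha,\beta,1}(F)$ also forces $e = c^3/(6r^2)$, placing $\ch(F)$ on the curve of line-bundle Chern characters. Equating the $\nu$-slopes of $F$ and $E$ at the apex $(\tfrac{m-n}{2},\tfrac{m+n}{2})$ of the semicircle is a finite computation pinning $c = mr$ or $c = nr$, so $\ch(F)$ is a positive multiple of $\ch(\OO(m))$ or of $\ch(\OO(n))$, and by Proposition~\ref{prop:line_bundles_uniquely_stable} we get $F \cong \OO(m)$ or $F \cong \OO(n)[1]$. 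Linear independence of $\ch(\OO(m))$ and $\ch(\OO(n))$ then forces the factors to be exactly $i$ copies of $\OO(m)$ and $j$ copies of $\OO(n)[1]$, and the vanishing $\Ext^1(\OO(m),\OO(n)[1]) = H^2(\P^3,\OO(n-m)) = 0$ lets one reorganize the iterated extension so that $\OO(m)^{\oplus i}$ sits as a subobject with quotient $\OO(n)^{\oplus j}[1]$, as claimed.

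I expect the main obstacle to be the passage from "$F$ is a stable factor on the wall" to "$Q^{\tilt}(F) = 0$": it fuses openness of stability with the BMT inequality on $\P^3$ to upgrade a pointwise vanishing to an identity, and is followed by the apex slope computation that turns "$\ch(F)$ lies on the line-bundle curve" into "$F$ is one of our two line bundles". The remaining steps are quadratic-form bookkeeping or direct appeals to Proposition~\ref{prop:line_bundles_uniquely_stable}.
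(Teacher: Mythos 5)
Your proposal is correct and follows essentially the same route as the paper's proof: $Q_{\alpha,\beta,1}(v)=0$ identifies the semicircle and the BMT inequality excludes semistable objects inside it, then the squeeze from Lemma \ref{lem:discriminant_properties} plus openness of stability forces $Q_{\alpha,\beta,1}(F)\equiv 0$ and $Q^{\tilt}(F)=0$ for each stable factor, the apex slope computation pins $\ch(F)$ to a multiple of $\ch(\OO(m))$ or $\ch(\OO(n))$, and Proposition \ref{prop:line_bundles_uniquely_stable} together with $\Ext^1(\OO(m),\OO(n)[1])=0$ finishes the classification. The extra details you supply (the value $Q^{\tilt}(v)=ij(m-n)^2$, membership of the extensions in $\Coh^{\beta}(\P^3)$, and the split extension showing the numerical wall is actual) are consistent with, and slightly amplify, the paper's argument.
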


In the case of the Chern character of an ideal sheaf of a curve there is also a bound on the biggest wall.

\begin{prop}
\label{prop:biggestWall}
Let $v = (1,0,-d,e)$ be the Chern character of an ideal sheaf of a curve of
degree $d$. The biggest wall for $M^{\tilt}_{\alpha, \beta}(v)$ and
$\beta < 0$ is contained inside the semicircle defined by $\nu_{\alpha,
\beta}(v) = \nu_{\alpha, \beta}(\OO(-1))$. The biggest wall in the case
$\beta > 0$ is contained inside the semicircle defined by $\nu_{\alpha,
\beta}(v) = \nu_{\alpha, \beta}(\OO(1))$.
\begin{proof}
We start by showing there is no wall intersecting $\beta = \pm 1$. Let $E$ be
tilt semistable for $\beta = \pm 1$ and some $\alpha$ with $\ch(E) = \pm v$.
Then $\ch^{\pm 1}_1(E) = 1$ holds. If $E$ is strictly tilt semistable, then
there is an exact sequence $0 \to F \to E \to G \to 0$ of tilt semistable
objects with the same slope. However, either $\ch^{\pm 1}(F) = 0$ or $\ch^{\pm
1}(G) = 0$, a contradiction. The numerical wall $\nu_{\alpha, \beta}(v) =
\nu_{\alpha, \beta}(\OO(\pm 1))$ contains the point $\alpha = 0$, $\beta = \pm
1$. The argument is finished by the fact that numerical walls cannot intersect.
\end{proof}
\end{prop}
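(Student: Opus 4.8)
The plan is to reduce everything to a single fact — that the vertical line $\beta=-1$ (respectively $\beta=1$) meets no actual wall for $v$ — and then let the nested-semicircle structure of walls in tilt stability finish the job. I will carry out the case $\beta<0$; the case $\beta>0$ is identical after replacing $-1$ by $1$ and $\OO(-1)$ by $\OO(1)$.

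First I would show that the line $\beta=-1$ meets no actual wall for $v$. Suppose $E$ is tilt semistable at $\beta=-1$ for some $\alpha>0$ with $\ch(E)=\pm v$; after shifting we may assume $E\in\Coh^{-1}(\P^3)$. Since every object of $\Coh^{-1}(\P^3)$ has $H^2\cdot\ch^{-1}_1\geq 0$, and $H^2\cdot\ch^{-1}_1(\pm v)=\pm(v_1+v_0)=\pm 1$, we must have $\ch(E)=v$ and $H^2\cdot\ch^{-1}_1(E)=1$. If $E$ lay on an actual wall here, there would be a short exact sequence $0\to F\to E\to G\to 0$ in $\Coh^{-1}(\P^3)$ of tilt semistable objects with $\nu_{\alpha,-1}(F)=\nu_{\alpha,-1}(E)=\nu_{\alpha,-1}(G)$; in particular all three slopes are finite, forcing $H^2\cdot\ch^{-1}_1(F)$ and $H^2\cdot\ch^{-1}_1(G)$ to be positive integers summing to $1$, which is absurd.

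Next I would pin down the relevant semicircle and use non-intersection of walls. A short computation with the formulas of the Structure Theorem for Walls shows that the numerical wall $\nu_{\alpha,\beta}(v)=\nu_{\alpha,\beta}(\OO(-1))$ is the semicircle $\alpha^2+\beta^2+(2d+1)\beta+2d=0$, meeting the $\beta$-axis exactly at $\beta=-1$ and $\beta=-2d$; in particular its right-hand endpoint is the point $\alpha=0$, $\beta=-1$, and among all numerical walls for $v$ it is the unique one with right-hand endpoint $-1$ (the same computation shows that prescribing the right-hand endpoint to be $-1$ already determines the center, hence the circle). By the Structure Theorem for Walls the numerical walls for $v$ in the region $\beta<0$ form a nested family of semicircles, so if some actual wall $W'$ were not contained in this semicircle, then $W'$ would strictly contain it; its right-hand endpoint would then be $>-1$ and its left-hand endpoint $\leq -2d<-1$, so the line $\beta=-1$ crosses $W'$ at a point with $\alpha>0$. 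By part (5) of the Structure Theorem $W'$ is an actual wall there too, contradicting the previous paragraph. Hence every actual wall for $v$ with $\beta<0$, in particular the biggest one, lies inside the claimed semicircle.

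The single delicate point is the first step, and there the care is purely bookkeeping: one must check that the destabilizing sequence can be taken inside $\Coh^{-1}(\P^3)$, so that the three values of $H^2\cdot\ch^{-1}_1$ are nonnegative integers, and use that a term with $H^2\cdot\ch^{-1}_1=0$ has infinite tilt slope and therefore cannot occur in a destabilizing sequence whose common finite slope is $\nu_{\alpha,-1}(E)$. Everything in the remaining steps is a formal consequence of the already-established Structure Theorem for Walls in tilt stability.
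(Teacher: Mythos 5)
Your proposal is correct and follows essentially the same route as the paper: you rule out actual walls on the line $\beta=\mp 1$ using the minimality of $\ch_1^{\mp 1}(v)=1$, observe that the semicircle $\nu_{\alpha,\beta}(v)=\nu_{\alpha,\beta}(\OO(\mp 1))$ touches that line at $\alpha=0$, and conclude via the non-intersection/nesting of numerical walls together with part (5) of the structure theorem. Your write-up simply makes explicit the bookkeeping (the equation of the semicircle, its endpoints $-1$ and $-2d$, and the nestedness) that the paper's proof leaves implicit.
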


\subsection{Twisted Cubics}

While describing all the walls in general seems to be hard, we can handle the situation in examples. Let $C$ be a twisted cubic curve in $\P^3$. We will compute all the walls in tilt stability for $\beta < 0$ for the class $\ch(\II_C)$. There is a locally free resolution $0 \to \OO(-3)^{\oplus 2} \to \OO(-2)^{\oplus 3} \to \II_C \to 0$. This leads to
$$\ch^{\beta}(\II_C) = \left(1, -\beta, \frac{\beta^2}{2} - 3,
-\frac{\beta^3}{6} + 3\beta + 5\right).$$

\begin{figure}[h!]
        \centering
        \includegraphics[width=0.5\textwidth]{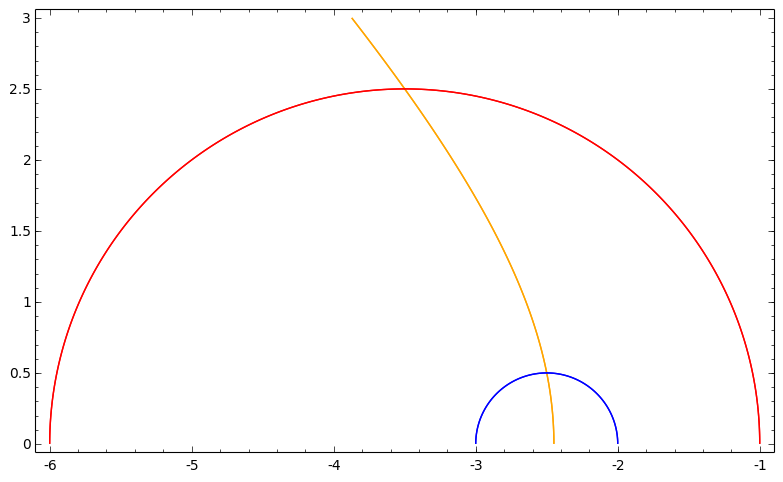}
        \caption{Walls in tilt stability}\label{fig:tilt_walls_twisted}
\end{figure}

\begin{thm}
\label{thm:tiltStabilityTwistedCubics}
There are two walls for $M^{\tilt}_{\alpha, \beta}(1,0,-3,5)$ for $\alpha > 0$
and $\beta < 0$. Moreover, the following table lists pairs of tilt semistable
objects whose extensions completely describe all strictly semistable objects
at each of the corresponding walls. Let $V$ be a plane in $\P^3$, $P \in \P^3$
and $Q \in V$.
\begin{center}
  \begin{tabular}{ r | l }
    & \\
    $\alpha^2 + (\beta + \frac{5}{2})^2 = \left(\frac{1}{2}\right)^2$ &
    $\OO(-2)^{\oplus 3}$, $\OO(-3)[1]^{\oplus 2}$ \\
    & \\
    \hline
    & \\
    $\alpha^2 + (\beta + \frac{7}{2})^2 = \left(\frac{5}{2}\right)^2$ &
    $\II_P(-1)$, $\OO_V(-3)$ \\ 
    & $\OO(-1)$, $\II_{Q/V}(-3)$\\
    & \\
  \end{tabular}
\end{center}
The hyperbola $\nu_{\alpha, \beta}(1,0,-3) = 0$ is given by the equation
$$\beta^2 - \alpha^2 = 6.$$
\end{thm}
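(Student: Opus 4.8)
The plan is to pin down the two walls by combining the numerical constraints from the Structure Theorem for Walls in Tilt Stability with the bounds already established in Proposition~\ref{prop:biggestWall} and Theorem~\ref{thm:smallestWall}, and then to identify the destabilizing subobjects and quotients by a case analysis on their Chern characters. First I would recall that, by Theorem~\ref{thm:smallestWall} applied to $v=(1,0,-3,5)=\ch(\OO(-2)^{\oplus 3})-\ch(\OO(-3)^{\oplus 2})$ (here $i=3$, $j=2$, $m=-2$, $n=-3$), the innermost possible wall is the semicircle $\alpha^2+(\beta+\tfrac52)^2=(\tfrac12)^2$, that it is an actual wall realized by extensions $0\to\OO(-2)^{\oplus 3}\to E\to\OO(-3)^{\oplus 2}[1]\to 0$, and that there is nothing tilt semistable inside it. By Proposition~\ref{prop:biggestWall}, every wall for $\beta<0$ lies inside the semicircle $\nu_{\alpha,\beta}(v)=\nu_{\alpha,\beta}(\OO(-1))$, which is $\alpha^2+(\beta+\tfrac72)^2=(\tfrac52)^2$; I would check that this outer semicircle is itself an actual wall by exhibiting the two short exact sequences $0\to\OO(-1)\to E\to\II_{Q/V}(-3)\to 0$ and $0\to\II_P(-1)\to E\to\OO_V(-3)\to 0$, verifying that the middle terms have Chern character $v$ and that all four factors are tilt semistable on that semicircle (the line bundles by Proposition~\ref{prop:line_bundles_uniquely_stable}, and $\II_P(-1)$, $\OO_V(-3)$, $\II_{Q/V}(-3)$ by Lemma~\ref{lem:large_volume_limit_tilt} since each has $H^2\cdot\ch_1^\beta\in\{0,c\}$ at the relevant $\beta$).

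The main work is to show there are \emph{exactly} two walls, i.e. no wall strictly between these two semicircles. For this I would take an actual wall $W$ in the open annulus between them, pick a point $(\alpha,\beta)$ on it and a destabilizing sequence $0\to F\to E\to G\to 0$ of tilt semistable objects with $\ch(E)=v$ and equal slopes. By part~(2) of the Structure Theorem the wall is determined by $\ch_{\le 2}(F)$, and by Lemma~\ref{lem:discriminant_properties}(2) we have $0\le Q^{\tilt}(F)+Q^{\tilt}(G)\le Q^{\tilt}(v)$; a direct computation gives $Q^{\tilt}(v)=(H^2\ch_1)^2-2H^3\ch_0\cdot H\ch_2 = 0 - 2\cdot 1\cdot(-3)=6$, so $Q^{\tilt}(F),Q^{\tilt}(G)\in\{0,\dots,6\}$ with $Q^{\tilt}(F)+Q^{\tilt}(G)\le 6$. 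Writing $\ch_{\le 2}(F)=(r,c,d)$ and using that $F\hookrightarrow E$ in $\Coh^\beta(\P^3)$ forces $H^2\cdot\ch_1^\beta(F)\in(0,1)$ strictly inside the annulus (both endpoints being excluded because $H^2\cdot\ch_1^\beta(F)=0$ or $=H^2\cdot\ch_1^\beta(E)$ would make the sequence not define a wall at an interior point, as in the proof of Proposition~\ref{prop:biggestWall}), I get $\beta\in(-3,-1)$ and a short list of candidate triples $(r,c,d)$. Running through these — the rank of $F$ or of $G$ is $0$ or $\pm 1$ since $\ch_0(E)=1$, and the $Q^{\tilt}$ bound plus $H\cdot\ch_2^\beta$ being an interior value heavily restricts $c$ and $d$ — I expect every admissible pair to yield a numerical wall that either coincides with one of the two semicircles above or fails to lie in the annulus, contradicting the choice of $W$.

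Finally, the hyperbola statement $\beta^2-\alpha^2=6$ is immediate from part~(3) of the Structure Theorem: for $(R,C,D)=(1,0,-3)$ the equation $R\alpha^2-R\beta^2+2C\beta-2D=0$ reads $\alpha^2-\beta^2+6=0$. The step I expect to be the genuine obstacle is the exhaustive-but-finite case analysis ruling out intermediate walls: one must be careful that $F$ and $G$ are honestly objects of $\Coh^\beta(\P^3)$ (so their cohomology sheaves are constrained), that the semistability of each candidate factor can actually be decided — invoking Proposition~\ref{prop:line_bundles_uniquely_stable} for line bundle classes and Lemma~\ref{lem:large_volume_limit_tilt} or an explicit resolution for the rank-one ideal-sheaf-type classes — and that the bookkeeping of $\ch_3$ (which tilt stability cannot see) does not secretly hide an extra numerical coincidence. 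Everything else is the kind of lengthy-but-routine computation that parts~(1) and~(3) of the Structure Theorem were designed to absorb.
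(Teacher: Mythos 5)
Your overall architecture matches the paper's (inner wall from Theorem \ref{thm:smallestWall}, an outer numerical bound, a case analysis on $\ch_{\leq 2}$ of destabilizers, identification of the factors via Lemma \ref{lem:torsion_free_rank_one}-type arguments), but the decisive step is exactly the one you defer: the statement ``I expect every admissible pair to yield a numerical wall that either coincides with one of the two semicircles or fails to lie in the annulus'' is the content of the theorem, and your sketch of how to run that search contains errors that would derail it. At a point of a hypothetical intermediate wall one has $H^2\cdot\ch_1^{\beta}(E) = -\beta$, which on the relevant region lies in $(1,6)$, so the constraint is $0 < H^2\cdot\ch_1^{\beta}(F) < -\beta$, not $H^2\cdot\ch_1^{\beta}(F)\in(0,1)$; likewise the tops of intermediate walls lie on the hyperbola with $\beta\in(-\tfrac72,-\tfrac52)$, not $\beta\in(-3,-1)$. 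With the correct (weaker) bounds, the list of candidate triples $(r,c,d)$ at an arbitrary, possibly irrational, point is not obviously short, and you give no mechanism that makes the search finite and complete. The paper's device is precisely this mechanism: since no object is semistable inside the smallest wall and walls are nested semicircles spanning at least $[-3,-2]$ on the $\beta$-axis, every other wall must cross the rational ray $\beta=-2$, where twisted Chern characters take discrete values; Lemma \ref{lem:chAtMinusTwo} then shows (using $0\leq c\leq 2$, the wall equation $\alpha^2=(4d+2)/(2r-1)>0$, Bogomolov, and Lemma \ref{lem:idealSheafRestriction}) that the only possibility is $\ch^{-2}_{\leq 2}(F)=(1,1,\tfrac12)$, $\ch^{-2}_{\leq 2}(G)=(0,1,-\tfrac32)$, hence at most one wall besides the smallest. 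If you want to keep your annulus formulation you should still evaluate at $\beta=-2$ (or another rational value crossed by all candidate walls), as in Lemma \ref{lem:rational_beta}; incidentally, with this done Proposition \ref{prop:biggestWall} is not even needed, since the outer wall falls out of the classification.

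A second, smaller gap: the theorem asserts that the listed extensions \emph{completely} describe all strictly semistable objects at the outer wall, and verifying the two exhibited sequences exist does not give this. One must classify all tilt semistable $F$, $G$ with the truncated characters above, including their third Chern characters: Lemma \ref{lem:idealSheafRestriction} forces $F\cong\II_Z(-1)$ and $G\cong\II_{Z'/V}(-3)$ with lengths $\tfrac16-e\geq 0$ and $e+\tfrac56\geq 0$, so $e\in\{\tfrac16,-\tfrac56\}$, which is exactly what produces the two rows $(\OO(-1),\II_{Q/V}(-3))$ and $(\II_P(-1),\OO_V(-3))$ and nothing else. You name the right tools for this but do not carry out the $\ch_3$ bookkeeping, which is where the completeness claim actually lives.
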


In order to prove the theorem we need to put numerical restrictions on
potentially destabilizing objects. We do this in a series of lemmas.

\begin{lem}
\label{lem:idealSheafRestriction}
Fix $\beta \in \Z$ and let $E \in \Coh^{\beta}(\P^3)$ be $\nu_{\alpha, \beta}$-semistable for some $\alpha > 0$.
\begin{enumerate}
  \item If $\ch^{\beta}(E) = (1,1,d,e)$, then $d - 1/2 \in \Z_{\leq 0}$.
  In the case $d = -1/2$, we get $E \cong \II_L(\beta + 1)$ where $L \subset \P^3$ is a line
  plus $1/6-e$ (possibly embedded) points. If $d = 1/2$, then
  $E \cong \II_Z(\beta + 1)$ for a zero dimensional subscheme $Z \subset \P^3$
  of length $1/6 - e$.
  \item If $\ch^{\beta}(E) = (0,1,d,e)$, then $d + 1/2 \in \Z$ and $E
  \cong I_{Z/V}(\beta + d + 1/2)$ where $Z$ is a dimension zero subscheme of
  length $1/24 + d^2/2 - e$.
\end{enumerate}
\begin{proof}
Lemma \ref{lem:large_volume_limit_tilt} implies $E$ to be either a
torsion free sheaf or a pure sheaf supported in dimension $2$. By tensoring $E$
with $\OO(-\beta)$ we can reduce to the case $\beta = 0$.

In case (1) we have $\ch(E \otimes \OO(-1)) = (1, 0, d - 1/2, 1/3 - d + e)$.
Lemma \ref{lem:torsion_free_rank_one} implies that $E \otimes \OO(-1)$ is an
ideal sheaf of a subscheme $Z \subset \P^3$. This implies $d - 1/2 \in \Z_{\leq 0}$.
If $d = 1/2$, then $Z$ is zero dimensional of length $d - e - 1/3 = 1/6 -
e$. In case $d = -1/2$, the subscheme $Z$ is a line plus points. The Chern
character of the ideal sheaf of a line is given by $(1,0,-1,1)$. Therefore, the
number of points is $1 + d - e - 1/3 = 1/6 - e$.

In case (2) $E$ is supported on a plane $V$. We will use Lemma
\ref{lem:torsion_free_rank_one} on $V$. In order to so, we need to use
the Grothendieck-Riemann-Roch Theorem to compute the Chern character of $E$ on $V$. The Todd classes of
$\P^2$ and $\P^3$ are given by $\td(\P^2) = (1, \tfrac{3}{2}, 1)$ and $\td(\P^3)
= (1, 2, \tfrac{11}{6}, 1)$. Therefore, we get
\begin{align*}
i_* \left(\ch_V(E) \cdot \left(1, \frac{3}{2}, 1\right) \right) &= \left(0, 1, d, e
\right) \cdot \left(1,2,\frac{11}{6},1\right) \\
&= \left(0, 1, d + 2, 2d + e + \frac{11}{6}\right)
\end{align*}
where $i: V \into \P^3$ is the inclusion. Thus, we have $\ch_V(E) = (1, d
+ 1/2, d/2 + e + 1/12)$ and $d + 1/2$ is indeed an integer. Moreover, we can
compute $$\ch_V(E \otimes \OO(-d-1/2)) = (1, 0, e - \frac{d^2}{2} - \frac{1}{24}).$$
Using Lemma \ref{lem:torsion_free_rank_one} on $V$ concludes the proof.
\end{proof}
\end{lem}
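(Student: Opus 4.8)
The plan is to extract the entire statement from two results already available: Lemma \ref{lem:large_volume_limit_tilt}, which forces a tilt semistable object with minimal twisted first Chern character into one of three explicit shapes, and Lemma \ref{lem:torsion_free_rank_one}, which recognizes a rank-one torsion-free sheaf with trivial determinant as an ideal sheaf. Everything else is a twist-and-compute. First I would tensor with $\OO(-\beta)$; this is an autoequivalence carrying $\Coh^{\beta}(\P^3)$ to $\Coh^{0}(\P^3)$ and tilt (semi)stability to tilt (semi)stability, so we may assume $\beta = 0$. In both cases $H^2 \cdot \ch_1^{0}(E) = 1$, which is the minimal positive value $c$ of $H^2 \cdot \ch_1^{0}$ on $\Coh^{0}(\P^3)$, so the second statement of Lemma \ref{lem:large_volume_limit_tilt} applies verbatim and $E$ is one of the three types described there. (The reason is exactly the one noted after that lemma: every proper subobject or quotient of $E$ in $\Coh^{0}(\P^3)$ has $H^2 \cdot \ch_1^{0} \in \{0,1\}$, so one of the two factors always has vanishing $\Im Z^{\tilt}_{\alpha,0}$ and no wall can form.)

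In case (1) we have $\ch_0(E) = 1 > 0$, which excludes the second and third types, so $E$ is torsion-free of rank one. I would then compute $\ch(E(-1)) = (1, 0, d - \tfrac{1}{2}, \tfrac{1}{3} - d + e)$ and apply Lemma \ref{lem:torsion_free_rank_one}, which gives $E(-1) \cong \II_Z$ with $\codim Z \geq 2$. Integrality of $\ch_2(\II_Z)$ together with $\ch_2(\II_Z) \leq 0$ forces $d - \tfrac{1}{2} \in \Z_{\leq 0}$. If $d = \tfrac{1}{2}$ then $\ch_2(\II_Z) = 0$, so $Z$ is zero-dimensional, and reading off $\ch_3$ gives length $\tfrac{1}{6} - e$. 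If $d = -\tfrac{1}{2}$ then $\ch_2(\II_Z) = -1$, which forces the one-dimensional part of $Z$ to be a line $L$; subtracting $\ch(\II_L) = (1,0,-1,1)$ then identifies the residual zero-dimensional part as $\tfrac{1}{6} - e$ (possibly embedded) points. Untwisting by $\OO(\beta)$ yields the stated description.

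In case (2) we have $\ch_0(E) = 0$ but $\ch_1(E) \neq 0$, so $E$ is a pure two-dimensional sheaf that has rank one on its support; hence $E = i_* F$ for $i : V \into \P^3$ a plane and $F$ a rank-one torsion-free sheaf on $V \cong \P^2$. I would recover the Chern character of $F$ via Grothendieck--Riemann--Roch, in the form $\ch(i_* F)\,\td(\P^3) = i_*\!\bigl(\ch(F)\,\td(\P^2)\bigr)$ with $\td(\P^2) = (1,\tfrac{3}{2},1)$ and $\td(\P^3) = (1,2,\tfrac{11}{6},1)$, obtaining $\ch_V(F) = (1,\, d + \tfrac{1}{2},\, \tfrac{d}{2} + e + \tfrac{1}{12})$; in particular $d + \tfrac{1}{2} \in \Z$. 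Twisting $F$ by $\OO_V(-d - \tfrac{1}{2})$ kills its first Chern class, and Lemma \ref{lem:torsion_free_rank_one} on $V$ identifies the result with $\II_{Z/V}$ for a zero-dimensional $Z \subset V$ of length $\tfrac{1}{24} + \tfrac{d^2}{2} - e$, read off from the last coordinate. Untwisting gives $E \cong \II_{Z/V}(\beta + d + \tfrac{1}{2})$.

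I do not expect a real obstacle: the substantive input is entirely the two cited lemmas, and the rest is careful bookkeeping with twisted Chern characters and Grothendieck--Riemann--Roch. The one point that is not purely formal is the reduction's use of $H^2 \cdot \ch_1^{0}(E) = c$ to upgrade semistability at a single point to the large-volume classification; the fiddliest computation is the Grothendieck--Riemann--Roch identification in case (2), but it is routine.
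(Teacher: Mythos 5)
Your proposal is correct and follows essentially the same route as the paper: reduce to $\beta=0$ by twisting, invoke the second statement of Lemma \ref{lem:large_volume_limit_tilt} (valid since $H^2\cdot\ch_1^{\beta}(E)=1$ is the minimal value $c$ for integral $\beta$), and then identify $E$ via Lemma \ref{lem:torsion_free_rank_one} after the same twist computations, including the identical Grothendieck--Riemann--Roch bookkeeping in case (2). The only difference is that you spell out the minimality-of-$c$ justification and the exclusion of the other two types, which the paper leaves implicit.
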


The next lemma determines the Chern characters of possibly destabilizing objects
for $\beta = -2$.

\begin{lem}
\label{lem:chAtMinusTwo}
If an exact sequence $0 \to F \to E \to G \to 0$ in $\Coh^{-2}(\P^3)$ defines a
wall for $\beta = -2$ with $\ch_{\leq 2}(E) = (1, 0, -3)$ then up to
interchanging $F$ and $G$ we have $\ch^{-2}_{\leq 2}(F) = (1, 1, \tfrac{1}{2})$
and $\ch^{-2}_{\leq 2}(G) = (0, 1, -\tfrac{3}{2})$.
\begin{proof}
The argument is completely independent of $F$ being a quotient or a subobject. 
We have $\ch^{-2}_{\leq 2}(E) = (1, 2, -1)$.

Let $\ch^{-2}_{\leq 2}(F) = (r,c,d)$. By definition of $\Coh^{-2}(\P^3)$, we
have $0 \leq c \leq 2$. If $c=0$, then $\nu_{\alpha, -2}(F) = \infty$ and this
is in fact no wall for any $\alpha > 0$. If $c=2$, then the same argument for
the quotient $G$ shows there is no wall. Therefore, $c=1$ must hold. We can
compute
\begin{align*}
\nu_{\alpha, -2}(E) = - \frac{2 + \alpha^2}{4}, \ \nu_{\alpha,
-2}(F) = d - \frac{r \alpha^2}{2}.
\end{align*}
The wall is defined by $\nu_{\alpha, -2}(E) = \nu_{\alpha, -2}(F)$. This
leads to
\begin{align}
\label{eq:alphaPositive}
\alpha^2 = \frac{4d+2}{2r-1} > 0.
\end{align}
The next step is to rule out the cases $r \geq 2$ and $r \leq -1$. If $r \geq
2$, then $\rk(G) \leq -1$. By exchanging the roles of $F$ and $G$ in the
following argument, it is enough to deal with the situation $r \leq -1$. In that
case we use (\ref{eq:alphaPositive}) and the Bogomolov inequality to
get the contradiction $2rd \leq 1$, $d < -\tfrac{1}{2}$ and $r \leq -1$.

Therefore, we know $r=0$ or $r=1$. By again interchanging the roles of $F$ and
$G$ if necessary, we only have to handle the case $r=1$. Equation
(\ref{eq:alphaPositive}) implies $d > - \tfrac{1}{2}$. By Lemma
\ref{lem:idealSheafRestriction} we get $d - 1/2 \in \Z_{\leq 0}$. Therefore, we
are left with the case in the claim.
\end{proof}
\end{lem}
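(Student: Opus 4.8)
The plan is to determine $\ch^{-2}_{\leq 2}(F)$ by purely numerical means, using only the position of the wall, the tilt Bogomolov inequality, and the already-established Lemma \ref{lem:idealSheafRestriction}. First I would record the twisted invariants of $E$ at $\beta=-2$: from $\ch_{\leq 2}(E) = (1,0,-3)$ a direct substitution into the formulas for $\ch^{\beta}$ gives $\ch^{-2}_{\leq 2}(E) = (1,2,-1)$. Writing $\ch^{-2}_{\leq 2}(F) = (r,c,d)$, the defining property of $\Coh^{-2}(\P^3)$ forces $H^2 \cdot \ch^{-2}_1 \geq 0$ on every object of the heart, so $c \geq 0$, and applying the same bound to the quotient $G$ gives $2 - c \geq 0$; hence $c \in \{0,1,2\}$. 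If $c = 0$ then $F$ has $\nu_{\alpha,-2} = +\infty$, which cannot equal the finite slope of $E$, so the sequence is no wall; the case $c = 2$ is the same argument applied to $G$. Thus $c = 1$. Throughout I will use freely that $F$ and $G$ play interchangeable roles at a wall, relabelling whenever convenient.

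Next I would extract the wall equation. With $c = 1$ one computes $\nu_{\alpha,-2}(E) = -\tfrac{2+\alpha^2}{4}$ and $\nu_{\alpha,-2}(F) = d - \tfrac{r\alpha^2}{2}$; setting these equal and solving for $\alpha^2$ yields
$$\alpha^2 = \frac{4d+2}{2r-1},$$
which must be strictly positive at the wall. The key step is then the rank bound. If $r \geq 2$ then $\rk G = 1 - r \leq -1$, so after interchanging $F$ and $G$ it suffices to rule out $r \leq -1$. In that range $2r-1 < 0$, so positivity of $\alpha^2$ forces $d < -\tfrac12$; but $F$ is tilt semistable, so the Bogomolov inequality for tilt stability gives $Q^{\tilt}(F) = c^2 - 2rd = 1 - 2rd \geq 0$, whereas $r \leq -1$ together with $d < -\tfrac12$ forces $2rd > 1$ --- a contradiction. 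Hence $r \in \{0,1\}$.

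Interchanging $F$ and $G$ once more, I would assume $r = 1$ (so $\rk G = 0$). Then $\alpha^2 = 4d+2 > 0$ gives $d > -\tfrac12$, and since $\ch^{-2}_{\leq 2}(F) = (1,1,d)$ with $F$ tilt semistable, the first part of Lemma \ref{lem:idealSheafRestriction} forces $d - \tfrac12 \in \Z_{\leq 0}$; combined with $d > -\tfrac12$ this leaves only $d = \tfrac12$. Subtracting from $\ch^{-2}_{\leq 2}(E) = (1,2,-1)$ gives $\ch^{-2}_{\leq 2}(G) = (0,1,-\tfrac32)$, which is the claim.

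I expect the main obstacle to be the rank restriction: making the symmetry reduction ($r \geq 2$ versus $r \leq -1$) watertight and squeezing the contradiction out of the tilt Bogomolov inequality in the negative-rank case. Everything before that is a mechanical computation with twisted Chern characters, and once the rank is confined to $\{0,1\}$ the value of $d$ drops out immediately from Lemma \ref{lem:idealSheafRestriction}.
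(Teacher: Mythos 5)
Your proposal is correct and follows essentially the same route as the paper: bound $c$ by the heart condition, derive the wall equation $\alpha^2=(4d+2)/(2r-1)>0$, eliminate $r\geq 2$ and $r\leq -1$ via the tilt Bogomolov inequality after the symmetry reduction, and then pin down $d=\tfrac12$ using Lemma \ref{lem:idealSheafRestriction}. The only difference is cosmetic: you spell out the contradiction $2rd>1$ versus $2rd\leq 1$ slightly more explicitly than the paper does.
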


\begin{proof}[Proof of Theorem \ref{thm:tiltStabilityTwistedCubics}]
Since we are only dealing with $\beta < 0$, the structure theorem for walls
in tilt stability (Theorem \ref{thm:structure_theorem}) implies that all walls intersect the left branch of the
hyperbola. In Theorem \ref{thm:smallestWall} we already determined the
smallest wall in much more generality. It intersects the
$\beta$-axis at $\beta = -3$ and $\beta = -2$. Therefore, all other walls
intersecting this branch of the hyperbola have to intersect the ray $\beta
= -2$. By Lemma \ref{lem:chAtMinusTwo} there is at most one wall on this
ray. It corresponds to the solution claimed to exist.

Let $0 \to F \to E \to G \to 0$ define a wall in $\Coh^{-2}(\P^3)$ with
$\ch(E) = (1,0,-3,5)$. One can compute $\ch^{-2}(E) = (1, 2, -1, \tfrac{1}{3})$. 
Up to interchanging the roles of $F$ and $G$ we have $\ch^{-2}(F) = (1, 1,
1/2, e)$ and $\ch^{-2}(G) = (0, 1, -3/2, 1/3 - e)$. By Lemma
\ref{lem:idealSheafRestriction} we get $F \cong \II_Z(-1)$ where $Z \in \P^3$
is a zero dimensional sheaf of length $1/6-e$ in $\P^3$. In particular, the
inequality $e \leq 1/6$ holds. The same lemma also implies that $G \cong
I_{Z'/V}(-3)$ where $Z'$ is a dimension zero subscheme of length $e + 5/6$ in
$V$. In particular, $e \geq -5/6$. Therefore, the two cases $e = \tfrac{1}{6}$
and $e = -\tfrac{5}{6}$ remain, and correspond exactly to the two sets of objects
in the Theorem.
\end{proof}


\section{Connecting Bridgeland Stability and Tilt Stability}
\label{sec:connection}

In the example of twisted cubics in the last section, we saw that the biggest
wall was defined by two different types of exact sequences. Their difference was
purely determined in codimension three. It is not very surprising
that codimension three geometry cannot be properly captured by tilt stability,
since its definition does not include the third Chern character. It seems
difficult to precisely determine how the corresponding sets of stable objects
change at this complicated wall. We will show a general way to handle this issue
by using Bridgeland stability conditions. The problem stems from the fact that
Lemma \ref{lem:wall_crossing} is in general incorrect in tilt stability.
We will see how these multiple walls in
tilt stability have to separate in Bridgeland stability in the next section for
some examples.

Let $v = (v_0, v_1, v_2, v_3)$ be the Chern character of an object in $D^b(X)$.
For any $\alpha > 0$, $\beta \in \R$, and $s > 0$ we denote the set of
$\lambda_{\alpha, \beta, s}$-semistable objects with Chern character $\pm v$ by
$M_{\alpha, \beta, s}(v)$. Analogous to our notation for twisted Chern
characters we write $v^{\beta} = (v^{\beta}_0, v^{\beta}_1, v^{\beta}_2,
v^{\beta}_3) := v \cdot e^{-\beta H}$. We also write
$$P_v := \{(\alpha, \beta) \in \R_{> 0} \times \R : \nu_{\alpha, \beta}(v) >
0 \}.$$
The goal of this section is to prove the
following theorem. Under some hypotheses, it roughly says that on one side of
the hyperbola $\{ \nu_{\alpha, \beta}(v) = 0 \}$ all the chambers and wall
crossings of tilt stability occur in a potentially refined way in Bridgeland
stability. In general, the difference between these wall crossings and the
corresponding situation in tilt stability is comparable to the difference
between slope stability and Gieseker stability. Using the theory of polynomial
stability conditions from \cite{Bay09} one can define an analogue of that
situation to make this precise. We will not do this, as we are not aware of any
interesting examples in which the difference matters.

\begin{thm}
\label{thm:wall_intersecting_hyperbola}
Let $v$ be the Chern character of an object in $D^b(X)$, $\alpha_0 > 0$,
$\beta_0 \in \R$, and $s > 0$ such that $\nu_{\alpha_0, \beta_0}(v) = 0$, $H^2
v^{\beta_0}_1 > 0$, and $Q^{\tilt}(v) \geq 0$.
\begin{enumerate}
  \item Assume there is an actual wall in Bridgeland stability for $v$ at
  $(\alpha_0, \beta_0, s)$ given by
  $$0 \to F \to E \to G \to 0.$$
  That means $\lambda_{\alpha_0, \beta_0, s}(F) = \lambda_{\alpha_0, \beta_0,
  s}(G)$ and $\ch(E) = \pm v$ for semistable $E,F,G \in \AA^{\alpha_0,
  \beta_0}(X)$. Further assume there is a neighborhood $U$ of $(\alpha_0,
  \beta_0)$ such that the same sequence also defines an actual wall in $U \cap
  P_v$, i.e., $E,F,G$ remain semistable in $U \cap P_v \cap \{ \lambda_{\alpha,
  \beta, s}(F) = \lambda_{\alpha, \beta, s}(G)\}$. Then $E[-1]$, $F[-1]$,
  $G[-1] \in \Coh^{\beta_0}(X)$ are $\nu_{\alpha_0, \beta_0}$-semistable. In
  particular, there is an actual wall in tilt stability at $(\alpha_0, \beta_0)$.
  \item Assume that all $\nu_{\alpha_0, \beta_0}$-semistable objects of class $v$ are $\nu_{\alpha_0, \beta_0}$-stable.
  Then there is a neighborhood $U$ of $(\alpha_0, \beta_0)$ such that
  $$M_{\alpha, \beta, s}(v) = M^{\tilt}_{\alpha, \beta}(v)$$
  for all $(\alpha, \beta) \in U \cap P_v$. Moreover, in this case all objects
  in $M_{\alpha, \beta, s}(v)$ are $\lambda_{\alpha, \beta, s}$-stable.
  \item Assume there is a wall in tilt stability intersecting $(\alpha_0,
  \beta_0)$. If the set of tilt stable objects is different on the two sides of
  the wall, then there is at least one actual wall in Bridgeland stability in
  $P_v$ that has $(\alpha_0, \beta_0)$ as a limiting point.
  \item Assume there is an actual wall in tilt stability for $v$ at $(\alpha_0,
  \beta_0)$ given by
  $$0 \to F^{\oplus n} \to E \to G^{\oplus n} \to 0$$
  such that $F, G \in \Coh^{\beta_0}(X)$ are $\nu_{\alpha_0, \beta_0}$-stable
  objects, $\ch(E) = v$ and $\nu_{\alpha_0, \beta_0}(F) = \nu_{\alpha_0,
  \beta_0}(G)$. Assume further that the set
  $$P_v \cap P_{\ch(F)} \cap P_{\ch(G)} \cap \{ \lambda_{\alpha, \beta, s}(F) =
  \lambda_{\alpha, \beta, s}(G)\}$$
  is non empty. Then there is a neighborhood $U$ of $(\alpha_0, \beta_0)$ such
  that $F,G$ are $\lambda_{\alpha, \beta, s}$-stable for all $(\alpha, \beta)
  \in U \cap P_v \cap \{ \lambda_{\alpha, \beta, s}(F) = \lambda_{\alpha,
  \beta, s}(G)\}$. In particular, there is an actual wall in
  Bridgeland stability restricted to $U \cap P_v$ defined by the same sequence.
\end{enumerate}
\end{thm}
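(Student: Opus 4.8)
The four parts share a single engine, which the plan is to set up first. Since $\Im Z_{\alpha,\beta,s}=H\cdot\ch_2^{\beta}-\tfrac{\alpha^2}{2}H^3\cdot\ch_0^{\beta}=\nu_{\alpha,\beta}(\,\cdot\,)\cdot(H^2\cdot\ch_1^{\beta})$ and $H^2 v_1^{\beta_0}>0$, the two hypotheses say exactly that $\Im Z_{\alpha_0,\beta_0,s}(v)=0$, while $H^2\cdot\ch_1^{\beta}>0$ persists near $(\alpha_0,\beta_0)$, so $\Im Z_{\alpha,\beta,s}(v)>0$ on $P_v$. Hence a $\nu_{\alpha_0,\beta_0}$-semistable $E$ with $\ch(E)=v$ lies in $\FF_{\alpha_0,\beta_0}$, so $E[1]\in\AA^{\alpha_0,\beta_0}(X)$ sits at the maximal phase $1$, whereas on $P_v$ such an $E$ lies unshifted in $\TT_{\alpha,\beta}\subseteq\AA^{\alpha,\beta}(X)$ with phase strictly below $1$. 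The second preliminary step, and where $H^2 v_1^{\beta_0}>0$ is really used, is a description of the maximal-phase category $P_{\alpha_0,\beta_0}(1)$: computing $\Im Z_{\alpha_0,\beta_0,s}$ on the $\TT_{\alpha_0,\beta_0}$- and $\FF_{\alpha_0,\beta_0}[1]$-parts of the $\Coh^{\beta_0}(X)$-cohomology of an object of $\AA^{\alpha_0,\beta_0}(X)$ shows both are non-negative and each vanishes only in one way, so every object of $P_{\alpha_0,\beta_0}(1)$ is an extension $0\to F'[1]\to M\to T\to 0$ with $F'\in\Coh^{\beta_0}(X)$ tilt-semistable of tilt-slope $0$ and $T$ a zero-dimensional sheaf.

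For (1), the plan is to push the given sequence $0\to F\to E\to G\to 0$ to $(\alpha_0,\beta_0)$. Local finiteness of Bridgeland walls (Theorem \ref{thm:locally_finite}), continuity (Proposition \ref{prop:bridgeland_continuous}), and the assumption that $E,F,G$ stay semistable on $U\cap P_v\cap\{\lambda_{\alpha,\beta,s}(F)=\lambda_{\alpha,\beta,s}(G)\}$ force them to remain semistable at $(\alpha_0,\beta_0)$; since $\ch(E)=\pm v$ and $E\in\AA^{\alpha_0,\beta_0}(X)$ this forces $\ch(E)=-v$ and $E$ to have phase $1$, and as $F$ is a semistable subobject and $G$ a semistable quotient with equal $\lambda_{\alpha_0,\beta_0,s}$, all three lie in $P_{\alpha_0,\beta_0}(1)$. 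Applying $[-1]$ and taking $\Coh^{\beta_0}(X)$-cohomology turns the triangle into a short exact sequence $0\to F[-1]\to E[-1]\to G[-1]\to 0$ in $\Coh^{\beta_0}(X)$ with all tilt slopes equal to $0$; the remaining point is that these three objects are genuinely $\nu_{\alpha_0,\beta_0}$-semistable and not merely assembled from tilt-slope-$0$ Harder-Narasimhan factors, and this is exactly where the persistence over all of $U\cap P_v$ is used: moving into $P_v$ and tracking which shifts of $E,F,G$ actually land in $\AA^{\alpha,\beta}(X)$ rules out the degenerate configurations (such as $F[-1]$ acquiring a zero-dimensional subsheaf). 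The desired actual tilt wall is then the shifted sequence.

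Parts (4) and (2) are the positive comparison statements. For (4), at $(\alpha_0,\beta_0)$ the objects $F[1]$ and $G[1]$ lie in $P_{\alpha_0,\beta_0}(1)$, hence are automatically $\lambda_{\alpha_0,\beta_0,s}$-semistable, and tilt-stability of $F,G$ together with the structure of $P_{\alpha_0,\beta_0}(1)$ upgrades this to stability, since a phase-$1$ subobject would produce a proper tilt subobject of the same slope; then on the nonempty open set $P_v\cap P_{\ch(F)}\cap P_{\ch(G)}\cap\{\lambda_{\alpha,\beta,s}(F)=\lambda_{\alpha,\beta,s}(G)\}$ the objects $F,G$ are unshifted in $\AA^{\alpha,\beta}(X)$, remain tilt-stable (Proposition \ref{prop:tilt_continuous}), and by openness of stability — with Lemma \ref{lem:wall_crossing} as the template for propagating stability across the common-phase locus, the $P_{\alpha_0,\beta_0}(1)$-analysis playing the role of the Jordan-H\"older step at the boundary point — remain $\lambda_{\alpha,\beta,s}$-stable on a neighborhood, so that a non-split extension $0\to F^{\oplus n}\to E\to G^{\oplus m}\to 0$ realizing $\ch(E)=\pm v$ cuts out the claimed Bridgeland wall in $U\cap P_v$. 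For (2), the hypothesis that all $\nu_{\alpha_0,\beta_0}$-semistable objects are stable kills the tilt wall through $(\alpha_0,\beta_0)$ and, by the nested-wall structure with Proposition \ref{prop:tilt_continuous}, all tilt walls for $v$ in a small $U\cap P_v$; on the Bridgeland side, Lemma \ref{lem:discriminant_properties} with the support property for $Q_{\alpha,\beta,K}$ bounds the classes of potential destabilizing subobjects, so there are only finitely many numerical Bridgeland walls for $v$ near $(\alpha_0,\beta_0)$, and any actual one in $P_v$ accumulating at $(\alpha_0,\beta_0)$ would give a tilt wall there by (1), a contradiction; after shrinking $U$, both $M^{\tilt}_{\alpha,\beta}(v)$ and $M_{\alpha,\beta,s}(v)$ are locally constant on $U\cap P_v$, and one matches them on a single point by noting that a $\lambda_{\alpha,\beta,s}$-semistable object with class $\pm v$ cannot be tilt-unstable in $P_v$ — a zero-dimensional subsheaf, the only thing that could obstruct this over the hyperbola, is excluded from a tilt-semistable object of finite positive tilt-slope — and conversely a tilt-semistable one is $\lambda_{\alpha,\beta,s}$-semistable because there is no wall, with stability following as in (4).

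Finally, (3) is obtained by contraposition: if no Bridgeland wall in $P_v$ limited to $(\alpha_0,\beta_0)$, the same support-property finiteness would produce a neighborhood on which $M_{\alpha,\beta,s}(v)$ is constant, yet the tilt wall through $(\alpha_0,\beta_0)$ yields, in the generic case, a strictly tilt-semistable configuration to which (4) applies and which exhibits a Bridgeland wall inside $U\cap P_v$, while the degenerate cases (a common-$\lambda$ locus covering all of $P_v$, or $F$ or $G$ not tilt-stable) are reduced to the generic one by passing to Jordan-H\"older factors and invoking Lemma \ref{lem:large_volume_limit_bridgeland}. I expect the main obstacle throughout to be precisely this boundary bookkeeping: near $(\alpha_0,\beta_0)$ one only controls destabilizers through their degeneration into $P_{\alpha_0,\beta_0}(1)$, and turning ``the destabilizer degenerates into $P_{\alpha_0,\beta_0}(1)$'' into ``there is an honest tilt-destabilizing subobject of $v$ in $\Coh^{\beta_0}(X)$'', while correctly accounting for the shift by $[1]$ and for zero-dimensional summands, is the delicate step underlying all four parts.
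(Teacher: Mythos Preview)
Your approach is essentially the same as the paper's, and it is correct in outline. The paper packages the ``single engine'' you describe into three short lemmas: one showing that a $\nu_{\alpha_0,\beta_0}$-stable object $E$ of tilt slope $0$ has $E[1]$ $\lambda$-semistable and $E$ $\lambda$-stable just inside $P_{\ch(E)}$; one pinning down the shift (your observation that $H^2\ch_1^{\beta_0}\le 0$ for objects of $P_{\alpha_0,\beta_0}(1)$ forces $\ch(E)=-v$, hence $E[-1]$ lies in $\AA^{\alpha,\beta}$ on $P_v$); and one showing that if $E\in\AA^{\gamma(t)}$ for $t<1$ while $E[1]\in\AA^{\alpha_0,\beta_0}$, then $E\in\Coh^{\beta_0}$ is $\nu$-semistable (the zero-dimensional quotient $E[1]\to\HH^0_{\beta_0}(E[1])$ is killed because $E[1]$ has phase $>1$ while a skyscraper has phase $1$). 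Your $P_{\alpha_0,\beta_0}(1)$ description and your ``tracking which shifts land in $\AA^{\alpha,\beta}$'' are exactly these lemmas, just not separated out. With them in hand, the paper's proofs of (1)--(4) are each only a few lines.

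Two points where your write-up is looser than it needs to be. In (1) you assert that $0\to F[-1]\to E[-1]\to G[-1]\to 0$ is already exact in $\Coh^{\beta_0}$ before disposing of the zero-dimensional $\HH^0_{\beta_0}$-parts; you then correctly note this is what the persistence in $P_v$ buys, but the order should be reversed. In (2), the sentence ``a zero-dimensional subsheaf \dots\ is excluded from a tilt-semistable object of finite positive tilt-slope'' reads circularly, since tilt-semistability is what you are proving; the clean argument is the shift lemma again (push the $\lambda$-semistable $E$ to $(\alpha_0,\beta_0)$, observe $E[1]\in P_{\alpha_0,\beta_0}(1)$, and kill $\HH^0_{\beta_0}(E[1])$ by the phase inequality). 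Finally, for (3) the paper simply invokes (2) at two nearby points on the hyperbola on either side of the tilt wall: since the tilt moduli differ and equal the Bridgeland moduli there, a Bridgeland wall must separate them. Your route through (4) and Jordan--H\"older factors works but is more effort than needed.
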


Before we can prove this theorem, we need three preparatory lemmas. The
following lemma shows how to descend tilt stability on the hyperbola $\{
\nu_{\alpha, \beta}(v) = 0 \}$ to Bridgeland stability on one side of the
hyperbola. The main issue is that the hyperbola can potentially be a wall
itself.

\begin{lem}
\label{lem:stable_near_hyperbola}
Assume $E \in \Coh^{\beta_0}(X)$ is a $\nu_{\alpha_0, \beta_0}$-stable object
such that $\nu_{\alpha_0, \beta_0}(E) = 0$ and fix some $s > 0$. Then
$E[1]$ is $\lambda_{\alpha_0, \beta_0, s}$-semistable. Moreover, there is a
neighborhood $U$ of $(\alpha_0, \beta_0)$ such that $E$ is $\lambda_{\alpha,
\beta, s}$-stable for all $(\alpha, \beta) \in U \cap P_{\ch(E)}$.
\begin{proof}
By definition $E[1] \in \AA^{\alpha_0, \beta_0}(X)$. Since $\lambda_{\alpha_0, \beta_0, s}(E[1]) = \infty$, the object $E[1]$ is semistable at this point. By Theorem \ref{thm:locally_finite}, there is a locally finite wall and chamber structure such that the Harder-Narasimhan filtration of $E$ is constant in each chamber. Therefore, we can choose a neighborhood $U$ around $(\alpha_0, \beta_0)$ such that any destabilizing stable quotient $E \onto G$ in $U \cap P_{\ch(E)}$ becomes a stable quotient in the Jordan-H\"older filtration of $E[1]$ at $(\alpha_0, \beta_0, s)$.

If $G$ is supported in dimension $0$, then it could not be a destabilizing quotient anywhere. Therefore, we get $\nu_{\alpha_0, \beta_0}(G) = 0$. Let $F$, respectively $F[1]$ be the kernel of the morphism. If $F$ is supported in dimension $0$, then it would be impossible for both $F \in \AA^{\alpha, \beta}(X)$ whenever $(\alpha, \beta) \in U \cap P_{\ch(E)}$ and $F[1] \in \AA^{\alpha_0, \beta_0}(X)$. In particular, we also get $\nu_{\alpha_0, \beta_0}(F) = 0$ or $F = 0$.

The long exact sequence with respect to $\Coh^{\beta_0}(X)$ is given by
$$0 \to \HH^{-1}_{\beta_0}(F[1]) \to E \to \HH^{-1}_{\beta_0}(G[1]) = G \to
\HH^0_{\beta_0}(F[1]) \to 0.$$
Due to Lemma \ref{lem:large_volume_limit_bridgeland}, the object $\HH_{\beta_0}^{0}(F[1])$ is supported in dimension $0$, but all of $F$ is not. Since $E$ is $\nu_{\alpha_0, \beta_0}$-stable, we must have $F = 0$. Therefore, $E=G$ is stable.
\end{proof}
\end{lem}

At the hyperbola the Chern character of stable objects usually changes between
$v$ and $-v$. This comes hand in hand with objects leaving the heart while a
shift of the object enters the heart. The next lemma deals with the question
which shift is at which point in the category.

\begin{lem}
\label{lem:which_shift}
Let $v$ be the Chern character of an object in $D^b(X)$, $\alpha_0 > 0$,
$\beta_0 \in \R$, and $s > 0$ such that $\nu_{\alpha_0, \beta_0}(v) = 0$, $H^2 v^{\beta}_1 > 0$, and $Q^{\tilt}(v) \geq 0$. Assume there is a path $\gamma: [0,1] \to \overline{P_v}$ with
$\gamma(1) = (\alpha_0, \beta_0)$, $\gamma([0,1)) \subset P_v$, $E \in
\AA^{\gamma(t)}(X)$ is $\lambda_{\gamma(t), s}$-semistable for all $t \in [0,1)$,
and $\ch(E) = v$. Then $E[1] \in \AA^{\alpha_0, \beta_0}(X)$.
\begin{proof}
The map $[0,1] \to \R$, $t \mapsto \phi_{\gamma(t),s}(E)$ is continuous.
Thus, there is $m \in \{0,1\}$ such that $E[m] \in \AA^{\alpha_0, \beta_0}(X)$
is $\lambda_{\alpha_0, \beta_0, s}$-semistable. Since $\lambda_{\alpha_0, \beta_0, s}(E[m]) = \infty$ for any $s$, the $\lambda_{\alpha_0, \beta_0, s}$-semistability of $E[m]$ is independent of $s$. Assume $m = 0$. Then Lemma \ref{lem:large_volume_limit_bridgeland} implies that $\HH^{-1}_{\beta_0}(E)$ is
$\nu_{\alpha_0, \beta_0}$-semistable and $\HH^0_{\beta_0}(E)$ is a sheaf
supported in dimension $0$. This implies $H^2 \ch^{\beta_0}_1(E) \leq 0$.
Therefore, $H^2 v_1^{\beta_0} > 0$ implies $\ch(E) = -v$. This leads to
$$\Im Z_{\gamma(t),s}(E) = - \Im Z_{\gamma(t),s}(v) < 0$$
for all $t \in [0,1)$ in contradiction to $E \in \AA^{\alpha_0, \beta_0}(X)$.
\end{proof}
\end{lem}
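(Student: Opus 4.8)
The plan is to argue by contradiction on the shift $m \in \{0,1\}$ that places $E[m]$ into the heart $\AA^{\alpha_0,\beta_0}(X)$. First I would observe that since $E$ is $\lambda_{\gamma(t),s}$-semistable for each $t \in [0,1)$, the phase function $t \mapsto \phi_{\gamma(t),s}(E)$ is well-defined and continuous on $[0,1)$, and by the continuity statement from Proposition \ref{prop:bridgeland_continuous} together with the openness of the relevant wall-and-chamber structure, it extends continuously to $t=1$. Because phases only live in a fixed half-open interval of length $1$ per heart, this forces the existence of a unique $m \in \{0,1\}$ with $E[m] \in \AA^{\alpha_0,\beta_0}(X)$ and $\lambda_{\alpha_0,\beta_0,s}$-semistable. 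So the entire content is to rule out $m=0$.

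The key step is then to invoke the large-volume-limit description. Assuming $m=0$, Lemma \ref{lem:large_volume_limit_bridgeland} applies to $E$ at $(\alpha_0,\beta_0)$ — here one uses that $\nu_{\alpha_0,\beta_0}(v)=0$ forces $E$ to be $\lambda$-semistable for $s$ in the relevant range, or more precisely that the hypothesis already hands us semistability at this point — and yields that $\HH^{-1}_{\beta_0}(E)$ is $\nu_{\alpha_0,\beta_0}$-semistable while $\HH^0_{\beta_0}(E)$ is a sheaf supported in dimension $0$. A sheaf supported in dimension $0$ contributes nothing to $H^2 \cdot \ch^{\beta_0}_1$, and an object of $\Coh^{\beta_0}(X)$ placed in degree $-1$ contributes nonpositively to $H^2 \cdot \ch^{\beta_0}_1$ of $E$; hence $H^2 \cdot \ch^{\beta_0}_1(E) \leq 0$. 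Combined with the hypothesis $H^2 v^{\beta_0}_1 > 0$, this forces $\ch(E) = -v$ rather than $+v$.

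Finally I would turn the sign $\ch(E)=-v$ into a contradiction with $E$ lying in the heart along the whole path. Since $\nu_{\alpha,\beta}(v) > 0$ on $P_v$, the imaginary part $\Im Z_{\gamma(t),s}$ evaluated on $v$ is strictly positive for $t \in [0,1)$ — this is exactly the statement that $H \cdot \ch^{\beta}_2 - \tfrac{\alpha^2}{2} H^3 \cdot \ch^{\beta}_0$ has the sign of $\nu$ up to the positive factor $H^2 \cdot \ch^{\beta}_1$, which is positive on $P_v$ near the hyperbola. Therefore $\Im Z_{\gamma(t),s}(E) = -\Im Z_{\gamma(t),s}(v) < 0$, which is impossible for an object of the heart $\AA^{\gamma(t)}(X)$, whose nonzero objects have nonnegative imaginary part of the central charge. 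This contradiction eliminates $m=0$ and gives $E[1] \in \AA^{\alpha_0,\beta_0}(X)$.

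The main obstacle I anticipate is the bookkeeping at $t=1$: the hyperbola $\{\nu_{\alpha,\beta}(v)=0\}$ may itself be a wall, so one must be careful that $E[m]$ is genuinely semistable at the endpoint and not merely a limit of semistable objects; here the locally finite wall-and-chamber structure (Theorem \ref{thm:locally_finite}, Proposition \ref{prop:bridgeland_continuous}) is what makes the phase extend continuously and keeps $E[m]$ in the heart. The sign computation $H^2 \cdot \ch^{\beta_0}_1(E) \le 0$ from the cohomology description is routine once Lemma \ref{lem:large_volume_limit_bridgeland} is in hand, and the final positivity of $\Im Z$ on $P_v$ is immediate from the definitions.
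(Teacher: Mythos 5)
Your proposal is correct and follows essentially the same route as the paper: continuity of the phase along $\gamma$ pins down $m\in\{0,1\}$, the case $m=0$ is excluded via Lemma \ref{lem:large_volume_limit_bridgeland} and the sign of $H^2\ch_1^{\beta_0}$, which forces $\ch(E)=-v$ and hence $\Im Z_{\gamma(t),s}(E)<0$, contradicting $E\in\AA^{\gamma(t)}(X)$. The extra care you take at $t=1$ (local finiteness of walls) and your restriction to $t$ near $1$ for the positivity of $\Im Z_{\gamma(t),s}(v)$ are harmless refinements of the same argument.
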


The final lemma restricts the possibilities for semistable objects that leave
the heart while a shift enters the heart.

\begin{lem}
\label{lem:shift_enters}
Let $\gamma: [0,1] \to \R_{> 0} \times \R$ be a path, $\gamma(1) =
(\alpha_0, \beta_0)$, $s > 0$, $E \in D^b(X)$ be an object such that $E \in
\AA^{\gamma(t)}(X)$ is $\lambda_{\gamma(t),s}$-semistable for all $t \in[0,1)$,
and $E[1] \in \AA^{\alpha_0, \beta_0}(X)$ is $\lambda_{\alpha_0, \beta_0,
s}$-semistable. Then $E \in \Coh^{\beta_0}(X)$ is $\nu_{\alpha_0,
\beta_0}$-semistable.
\begin{proof}
The continuity of $[0,1] \to \R$, $t \mapsto \phi_{\gamma(t),s}(E)$ implies
$\Im Z_{\alpha_0, \beta_0, s}(E) = 0$. Then Lemma
\ref{lem:large_volume_limit_bridgeland} implies that $\HH^{-1}_{\beta_0}(E[1])$
is $\nu_{\alpha_0, \beta_0}$-semistable and $\HH^0_{\beta_0}(E[1])$ is a sheaf
supported in dimension $0$. In particular, there is a non trivial map $E[1] \to
\HH^0_{\beta_0}(E[1])$ unless $\HH^0_{\beta_0}(E[1]) = 0$. Since $E \in
\AA^{\gamma(t)}(X)$ for $t \in [0,1)$ one obtains
$$\phi_{\gamma(t), s}(E[1]) > 1 = \phi_{\gamma(t), s}(\HH^0_{\beta_0}(E[1])).$$
The semi-stability of $E$ implies $\HH^0_{\beta_0}(E[1]) = 0$.
\end{proof}
\end{lem}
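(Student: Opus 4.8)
The plan is to transfer the two semistability hypotheses across the endpoint $\gamma(1)=(\alpha_0,\beta_0)$ by tracking the phase of $E$ along the path and showing it degenerates to the boundary value $0$. First I would record that $t\mapsto \phi_{\gamma(t),s}(E)$ extends continuously to all of $[0,1]$: by Proposition \ref{prop:bridgeland_continuous} the assignment $t\mapsto(\AA^{\gamma(t)}(X),Z_{\gamma(t),s})$ is a continuous path in $\Stab(X,v)$, and the functions $\sigma\mapsto\phi^{\pm}(E)$ are continuous by the very definition of the topology. For $t\in[0,1)$ the object $E$ is semistable, so $\phi^+=\phi^-=\phi_{\gamma(t),s}(E)\in(0,1]$; at $t=1$ the object $E[1]$ is semistable of phase in $(0,1]$, hence $E$ is semistable there too with $\phi_{\alpha_0,\beta_0,s}(E)=\phi_{\alpha_0,\beta_0,s}(E[1])-1\in(-1,0]$. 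The one-sided limit at $t=1$ is $\geq 0$ (being a limit of values in $(0,1]$) and, by continuity, equals the endpoint value, which is $\leq 0$; squeezing forces $\phi_{\alpha_0,\beta_0,s}(E)=0$, i.e. $\phi_{\alpha_0,\beta_0,s}(E[1])=1$. Equivalently $Z_{\alpha_0,\beta_0,s}(E[1])\in\R_{<0}$, so $\Im Z_{\alpha_0,\beta_0,s}(E)=0$. This extraction of the exact boundary value from two a priori unrelated hypotheses, where the \emph{global} behaviour of the path (not just local data at $(\alpha_0,\beta_0)$) is used, is the delicate point of the argument.

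Next I would exploit that phase $1$ is now forced. Since $\Im Z$ does not involve $s$, the vanishing $\Im Z_{\alpha_0,\beta_0,s}(E[1])=0$ holds for every $s$. As $E[1]$ lies in the heart, every subobject and quotient also has $\Im Z\in[0,\Im Z(E[1])]=\{0\}$, so all have $\lambda$-slope $+\infty$ and $E[1]$ is automatically $\lambda_{\alpha_0,\beta_0,s'}$-semistable for all $s'>0$, in particular for $s'\gg0$. This places us exactly in the hypothesis of Lemma \ref{lem:large_volume_limit_bridgeland}, which I would apply to $E[1]$ to conclude that $\HH^{-1}_{\beta_0}(E[1])$ is $\nu_{\alpha_0,\beta_0}$-semistable while $\HH^0_{\beta_0}(E[1])$ is a sheaf supported in dimension $0$. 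The alternative case of that lemma, in which $E[1]$ is itself a tilt-semistable object of $\Coh^{\beta_0}(X)$, is incompatible with the present setup: such a nonzero object lying in $\AA^{\alpha_0,\beta_0}(X)$ with $\Im Z=0$ would have to be zero-dimensional, forcing $E$ to be a shifted skyscraper and contradicting $E\in\AA^{\gamma(t)}(X)$ for $t\in[0,1)$.

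Finally I would eliminate the degree-zero term. The canonical truncation morphism $E[1]\onto\HH^0_{\beta_0}(E[1])$ is a fixed nonzero morphism in $D^b(X)$ whenever $\HH^0_{\beta_0}(E[1])\neq0$. A zero-dimensional sheaf has $\lambda_{\gamma(t),s}$-phase exactly $1$ at every point, whereas $E\in\AA^{\gamma(t)}(X)$ for $t\in[0,1)$ gives $\phi_{\gamma(t),s}(E[1])=\phi_{\gamma(t),s}(E)+1>1$. Both $E[1]$ and the skyscraper being semistable at $\gamma(t)$, the defining vanishing property of the slicing forces $\Hom(E[1],\HH^0_{\beta_0}(E[1]))=0$, hence $\HH^0_{\beta_0}(E[1])=0$. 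Therefore $E=\HH^{-1}_{\beta_0}(E[1])\in\Coh^{\beta_0}(X)$ and is $\nu_{\alpha_0,\beta_0}$-semistable, as desired. Everything after the phase-squeeze is structural, relying only on the $s$-independence of $\Im Z$ and on skyscrapers having constant phase $1$, so I expect the first paragraph to be the main obstacle.
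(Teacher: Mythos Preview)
Your proof is correct and follows the same strategy as the paper: continuity of the phase forces $\Im Z_{\alpha_0,\beta_0,s}(E)=0$, Lemma~\ref{lem:large_volume_limit_bridgeland} supplies the cohomological decomposition of $E[1]$, and the strict inequality $\phi_{\gamma(t),s}(E[1])>1$ for $t<1$ eliminates the zero-dimensional part $\HH^0_{\beta_0}(E[1])$. Your version is in fact more careful than the paper's in two places---you explain why the lemma applies (the $s$-independence of $\Im Z$ makes $E[1]$ semistable for all $s'$) and you dispose of the first alternative of that lemma---both of which the paper leaves implicit.
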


Together with these three lemmas, we can prove the Theorem.

\begin{proof}[Proof of Theorem \ref{thm:wall_intersecting_hyperbola}]
We start by proving (1). Since $0 \to F \to E \to G \to 0$ also defines a wall
in $U \cap P_v$ we know there is $m \in \Z$ such that $E[m]$, $F[m]$, $G[m] \in
\AA^{\alpha, \beta}(X)$ for $(\alpha, \beta) \in U \cap P_v$. By Lemma
\ref{lem:which_shift} this implies $m = -1$ and Lemma \ref{lem:shift_enters}
shows $E[-1]$, $F[-1]$ and $G[-1]$ are all $\nu_{\alpha_0, \beta_0}$-semistable.

This defines a wall in tilt stability unless $\nu_{\alpha, \beta}(F) =
\nu_{\alpha, \beta}(G)$ for all $(\alpha, \beta) \in \R_{> 0} \times \R$. But
this is only possible if $\lambda_{\alpha, \beta, s}(F) = \lambda_{\alpha,
\beta, s}(G)$ is equivalent to $\nu_{\alpha, \beta}(v) = 0$.

We continue by showing part (2). By assumption $(\alpha_0, \beta_0)$ does not
lie on any wall for $v$ in tilt stability. Let $U'$ be a neighborhood of $(\alpha_0,
\beta_0)$ that does not intersect any such wall. In particular, this means
$M^{\tilt}_{\alpha, \beta}(v)$ is constant on $U'$. By part (i) any wall in
Bridgeland stability that intersects the hyperbola $\{ \nu_{\alpha, \beta}(v) =
0 \}$ and stays an actual wall in some part of $P_v$ comes from a wall in
tilt stability. Therefore, we can choose a neighborhood $U''$ of $(\alpha_0,
\beta_0)$ such that there is no wall in Bridgeland stability for $v$ in $U'' \cap
P_v$. We define $U:= U' \cap U''$ and choose $(\alpha, \beta) \in U$.

The inclusion $M^{\tilt}_{\alpha, \beta}(v) \subset M_{\alpha, \beta, s}(v)$ is
a restatement of Lemma \ref{lem:stable_near_hyperbola}. Let $E \in M_{\alpha,
\beta, s}(v)$. There is $m \in \Z$ such that $E[m] \in \AA^{\alpha_0, \beta_0}$
is a $\lambda_{\alpha_0, \beta_0, s}$-semistable object. By Lemma
\ref{lem:which_shift} one gets $m = 1$ and Lemma \ref{lem:shift_enters} implies
$E \in \Coh^{\beta}(X)$ is tilt semistable, i.e., $E \in M^{\tilt}_{\alpha,
\beta}(v)$.

Part (3) follows from (2), while (4) is an immediate application of Lemma
\ref{lem:stable_near_hyperbola}.
\end{proof}


\section{Examples in Bridgeland Stability}
\label{sec:exmaples_bridgeland}

In this section the techniques for connecting Bridgeland stability and
tilt stability are applied to the previous examples on $\P^3$.

\subsection{Certain Sheaves}

Fix $s > 0$. Recall that $m,n \in \Z$ are integers with $n < m$ and $i,j \in \N$
are positive integers. There is a class defined by $v = i \ch(\OO(m)) - j
\ch(\OO(n))$. We will show that there is a path close to one branch of the
hyperbola $\{ \Im Z_{\alpha, \beta, s}(v) = 0 \}$, where the first
wall crossing described in Theorem \ref{thm:smallestWall} happens in Bridgeland
stability. The first moduli space after this wall turns out to be
smooth and irreducible. Moreover, at the end of the path stable objects
are exactly slope stable sheaves with Chern character $v$.

\begin{thm}
\label{thm:first_wall_general}
Assume that $(v_0, v_1, v_2) \in \Z \oplus \Z \oplus \tfrac{1}{2} \Z$ is a primitive vector. There is a path $\gamma:
[0,1] \to \R_{>0} \times \R \subset \Stab(\P^3)$ that satisfies the following
properties.
\begin{enumerate}
  \item The first wall for objects of class $v$ along $\gamma$ is given by $\lambda_{\alpha, \beta, s}
  (\OO(m)) = \lambda_{\alpha, \beta, s} (\OO(n))$. At $\gamma(0)$ there are no
  semistable objects. After the wall, the moduli space is smooth, irreducible,
  and projective.
  \item If $i \geq j$, then at $\gamma(1)$ the semistable objects are exactly
  slope stable coherent sheaves $E$ with $\ch(E) = v$. Moreover, there are no
  strictly semistable objects.
\end{enumerate}
\end{thm}

The theorem is stated for arbitrary $i$, $j$, $m$, $n$, but only interesting under restrictions. In the proof, we will show that the moduli space after the first wall is a moduli space of quiver representation on a generalized Kronecker quiver $Q$ which can very well be empty.

It is possible to put precise numerical restrictions on $i$, $j$, $m$, and $n$ under which the space is non-empty. By \cite[Proposition 4.4]{Kin94} the space is non-empty if and only $(i,j)$ is a Schur root for $Q$.
The description of the Schur roots of a generalized Kronecker quiver can for example be found in \cite[Example 7]{Fae13}.

\begin{proof}[Proof of Theorem \ref{thm:first_wall_general}]
By Theorem \ref{thm:smallestWall} there is a wall in tilt stability defined by the equation
$\nu_{\alpha, \beta} (\OO(m)) = \nu_{\alpha, \beta} (\OO(n))$. Moreover, there
is no smaller wall. Since $(v_0, v_1, v_2)$ is a primitive
vector, any moduli space of $\nu_{\alpha, \beta}$-semistable objects for $v$,
such that $(\alpha, \beta)$ does not lie on a wall, consists solely of
tilt stable objects. Let $Y \subset \{ \Im Z_{\alpha, \beta, s}(v) = 0 \}$ be
the branch of the hyperbola that intersects this wall. Due to Theorem
\ref{thm:wall_intersecting_hyperbola} we can find a path $\gamma: [0,1] \to
\R_{>0} \times \R \into \Stab(\P^3)$ close enough to $Y$ such that all moduli
spaces of tilt stable objects that occur on $Y$ outside of any wall are moduli
spaces of Bridgeland stable objects along $\gamma$. Moreover, we can assume that
$\gamma$ intersects no wall twice and the first wall crossing is given by
$\lambda_{\alpha, \beta, s} (\OO(m)) = \lambda_{\alpha, \beta, s} (\OO(n))$.

Part (2) can be proven as follows. By the choice of $\gamma$, we have
$M^{\tilt}_{\gamma(1)}(v) = M_{\gamma(1), s}(v)$. In tilt stability $\gamma(1)$
is above the largest wall. Therefore, Lemma \ref{lem:large_volume_limit_tilt}
and Lemma \ref{lem:stable_sheaves_stable_in_tilt} imply that
$M^{\tilt}_{\gamma(1)}(v)$ consists of slope stable sheaves $E$ with $\ch(E)
= v$.

We will finish the proof of (1) by showing that we get a
moduli space of representations on a generalized Kronecker quiver. Let $t \in (0,1)$ be such that 
$M_{\gamma(t), s}(v)$ is the first non-empty moduli space on $\gamma$. Let $Q$ be the generalized Kronecker quiver with $N = \dim \Hom(\OO(n), \OO(m))$ arrows. \\

\centerline{
\xygraph{
!{<0cm,0cm>;<1cm,0cm>:<0cm,1cm>::}
!{(0,0) }*+{\circ}="1"
!{(5,0) }*+{\circ}="2"
!{(2.5,0.1) }*+{\vdots \ N}="dots"
"1"-@{>}@/_0.4cm/"2"
"1"-@{>}@/^0.4cm/"2"
}} \ \\

For any representation $V$ of $Q$ we denote the dimension vector by
$\underline{\dim}(V)$. If $\theta: \Z \oplus \Z \to \Z$ is a homomorphism with
$\theta(j,i) = 0$ we say that a representation $V$ of $Q$ with
$\underline{\dim}(V) = (j, i)$ is $\theta$-(semi)stable if for any
subrepresentation $W \into V$ the inequality $\theta(W) > (\geq) 0$ holds.

Due to \cite{Kin94} there is a projective coarse moduli space $K_{\theta}$ that represents stable
complex representations with dimension vector $(j,i)$. If there are no strictly
semistable representation, then $K_{\theta}$ is a fine moduli space. By Theorem \ref{thm:smallestWall}
the moduli space solely consists of extensions of $\OO(n)^{\oplus j}[1]$ and $\OO(m)^{\oplus i}$. Therefore,
we can find $\theta_t$ such that $\theta_t$-stability and
Bridgeland stability at $\gamma(t)$ match. More precisely, there is a bijection
between Bridgeland stable objects at $\gamma(t)$ with Chern character $v$ and
$\theta_t$-stable complex representations with dimension vector $(j,i)$. We denote
this specific moduli space of quiver representations by $K$. Since the quiver
has no relation and $i$, $j$ have to be coprime, we get that $K$ is a smooth
projective variety.

We want to construct an isomorphism between $K$ and the moduli space
$M_{\gamma(t), s}(v)$ of Bridgeland stable complexes with Chern character $v$.
In order to do so, we need to make the above bijection more precise. Let
$\Hom(O(n), O(m)) = \bigoplus_l \C \varphi_l$. There is a functor $\FF: \Rep(Q)
\to D^b(\P^3)$ that sends a representation whose maps are given by $i \times j$ matrices $A_l$ for $l = 1, \ldots, N$ to the two term complex $\OO(n)^{\oplus j} \to \OO(m)^{\oplus i}$ with morphism 
\[
\begin{pmatrix}
s_1 \\
\vdots \\
s_j
\end{pmatrix}
\mapsto \sum_l A_l \begin{pmatrix}
\varphi_l(s_1) \\
\vdots \\
\varphi_l(s_j)
\end{pmatrix}. 
\]
This functor induces the bijection between stable objects mentioned above.

Let $S$ be a scheme over $\C$. A representation of $Q$ over $S$ is given by $N$
maps $f_1, \ldots, f_N: V \to W$ for locally free sheaves $V, W \in \Coh(S)$.
The functor above can be generalized to the relative setting as $\FF_S:
\Rep_S(Q) \to D^b(\P^3 \times S)$ sending a family of representations $f_l: V \to W$, $l = 1, \ldots, N$ to the two term
complex $V \boxtimes \OO(n) \to W \boxtimes \OO(m)$ where the map is given by
$\sum v \otimes s \mapsto \sum \sum_l f_l(v) \otimes \varphi_l(s)$.

If $\EE$ is a family of $\theta_t$-semistable representation over $S$, then
we get $\FF(\EE_s) = \FF_S(\EE)_s$ for any $s \in S$. That induces a bijective
morphism from $K$ to $M_{\gamma(t), s}(v)$. We want to show that this morphism
is in fact an isomorphism. In order to so, we will first need to prove
smoothness of the moduli space of Bridgeland stable objects at $\gamma(t)$.

We have $\dim M_{\gamma(t), s}(v) = \dim K = jiN - i^2 - j^2 + 1$. For any $E
\in M_{\gamma(t), s}(v)$ the Zariski tangent space at $E$ is given by $\Ext^1(E,E)$
by standard deformation theory arguments (see \cite{Ina02, Lie06}). We have an exact triangle
\begin{equation}
\label{eq:quivertriangle}
O(m)^{\oplus i} \to E \to O(n)^{\oplus j}[1].
\end{equation}
Since $E$ is stable we have $\Hom(O(n)[1], E) = 0$. Applying $\Hom(\OO(n),
\cdot)$ to (\ref{eq:quivertriangle}) leads to $\Hom(O(n), E) = \C^{Ni - j}$. The
same way we get $\Hom(O(m), E) = \C^i$ and $\Ext^1(O(m), E) = 0$. Since $E$ is
stable, the equation $\Hom(E,E) = \C$ holds. Applying $\Hom(\cdot, E)$ to
(\ref{eq:quivertriangle}) leads to the following long exact sequence.
$$0 \to \C \to \C^{i^2} \to \C^{Nij - j^2} \to \Ext^1(E,E) \to 0.$$
That means $\dim \Ext^1(E,E) = Nij - j^2 - i^2 + 1 = \dim M_{\gamma(t), s}(v)$,
i.e., $M_{\gamma(t), s}(v)$ is smooth.

Since there are no strictly semistable objects, we can use the main result of
\cite{PT15} to infer that $M_{\gamma(t), s}(v)$ is a smooth proper algebraic
space of finite type over $\C$. According to \cite[Page 23]{Knu71} there is a
fully faithful functor from smooth proper algebraic spaces of finite type over
$\C$ to complex manifolds. Since any bijective holomorphic map between two
complex manifolds has a holomorphic inverse we are done.
\end{proof}

\subsection{Twisted Cubics}

In the example of twisted cubic curves, we described all walls in
tilt stability for $\beta < 0$ in Theorem
\ref{thm:tiltStabilityTwistedCubics}. We will translate this result into
Bridgeland stability via Theorem \ref{thm:wall_intersecting_hyperbola}.

\begin{figure}[h!]
        \centering
        \includegraphics[width=0.7\textwidth]{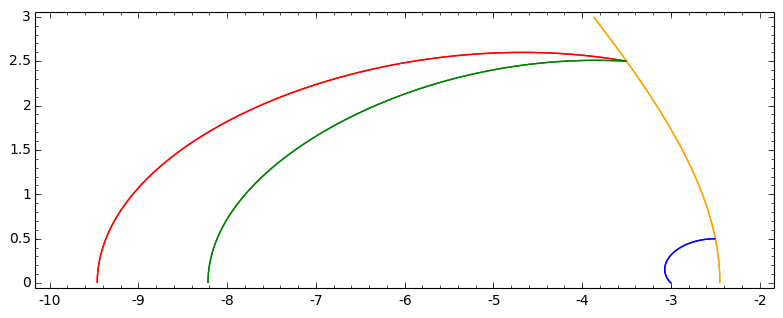}
        \caption{Walls in Bridgeland
        stability}\label{fig:bridgeland_walls_twisted}
\end{figure}

\begin{thm}
\label{thm:bridgeland_twisted_cubics}
There is a path $\gamma: [0,1] \to \R_{>0} \times \R \subset \Stab(\P^3)$ that
crosses the following walls for $v = (1, 0, -3, 5)$ in the following order. The
walls are defined by the two given objects having the same slope. Moreover, all
strictly semistable objects at each of the walls are extensions of those two
objects. Let $V$ be a plane in $\P^3$, $P \in \P^3$
and $Q \in V$.
\begin{enumerate}
  \item $\OO(-2)^{\oplus 3}$, $\OO(-3)[1]^{\oplus 2}$
  \item $\II_P(-1)$, $\OO_V(-3)$
  \item $\OO(-1)$, $\II_{Q/V}(-3)$
\end{enumerate}
The chambers separated by those walls lead to the following moduli spaces.
\begin{enumerate}
  \item The empty space $M_0 = \emptyset$.
  \item A smooth projective variety $M_1$.
  \item A space with two components $M_2 \cup M'_2$. The space $M_2$ is a blow up of $M_1$ in the incidence variety parametrizing a point in a plane in $\P^3$. The second component $M'_2$ is a $\P^9$-bundle over the smooth variety $\P^3 \times (\P^3)^{\vee}$ parametrizing pairs $(\II_P(-1)$, $\OO_V(-3))$. The two
  components intersect transversely in the exceptional locus of the blow up.
  \item The Hilbert scheme of curves $C$ with $\ch(\II_C) = (1, 0, -3, 5)$. It
  is given as $M_2 \cup M'_3$ where $M'_3$ is a blow up of $M'_2$ in the smooth
  locus parametrizing objects $\II_{Q/V}(-3)$.
\end{enumerate}
\end{thm}

Before proceeding with the proof, we would like to make a few remarks on Figure \ref{fig:bridgeland_walls_twisted}. While the precise picture depends on $s$, the relative position of the given walls does not. Note that the theorem describes the moduli spaces along some path to the left of the hyperbola. However, it does not show that there are no further walls away from the hyperbola. For example, it is possible that the wall crossing for fixed small $\alpha$ and varying $\beta$ is different than expected from the figure. In general, it is completely open how walls behave away from the hyperbola for both this and other examples.

\begin{proof}[Proof of Theorem \ref{thm:bridgeland_twisted_cubics}]
Let $\gamma$ be the path that exists due to Theorem
\ref{thm:first_wall_general}. The fact that all the walls on this path occur in
this form is a direct consequence of Theorem
\ref{thm:wall_intersecting_hyperbola} and Theorem
\ref{thm:tiltStabilityTwistedCubics}.

By Theorem \ref{thm:first_wall_general} we know that $M_0 = \emptyset$, that $M_1$ is smooth, projective, and irreducible and that the Hilbert scheme occurs at the end of the path. The main result in \cite{PS85} is that this Hilbert scheme has exactly two smooth irreducible components of dimension $12$ and $15$ that intersect transversely in a locus of dimension $11$. The $12$-dimensional component $M_2$ contains the space of twisted cubics as an open subset. The $15$-dimensional component $M'_3$ parametrizes plane cubic curves with a potentially but no necessarily embedded point. Moreover, the intersection parametrizes plane singular cubic curves with a spatial embedded point at a singularity. In particular, those curves are not scheme theoretically contained in a plane.

Strictly semistable objects at the biggest wall are given by extensions of $\OO(-1)$, $\II_{Q/V}(-3)$. For an ideal sheaf of a curve this can only mean that there is an exact sequence
$$0 \to \OO(-1) \to I_C \to \II_{Q/V}(-3) \to 0.$$
This can only exist, if $C \subset V$ scheme theoretically. Therefore, the last wall only modifies the second component. The moduli space of objects $\II_{Q/V}(-3)$ is the incidence variety of points in a plane inside $\P^3 \times (\P^3)^{\vee}$. In particular, it is smooth and of dimension $5$. A straightforward computation shows $\Ext^1(\OO(-1), \II_{Q/V}(-3)) = \C$. That means at this wall the irreducible locus of extensions $\Ext^1(\II_{Q/V}(-3), \OO(-1)) = \C^{10}$ is contracted onto a smooth locus. Moreover, for each sheaf $\II_{Q/V}(-3)$ the fiber is given by $\P^9$. This means the contracted locus is a divisor. By a classical result of Moishezon \cite{Moi67} any proper birational morphism $f: X \to Y$ between smooth projective varieties such that the contracted locus $E$ is irreducible and the image $f(E)$ is smooth is the blow up of $Y$ in $f(E)$. Therefore, to see that $M'_3$ is the blow up of $M'_2$ we need to show that $M'_2$ is smooth.

At the second wall strictly semistable objects are given by extensions of $\II_P(-1)$ and $\OO_V(-3)$. One computes $\Ext^1(\II_P(-1), \OO_V(-3)) = \C$ for $P \in V$, $\Ext^1(\II_P(-1), \OO_V(-3)) = 0$ for $P \notin V$, and $\Ext^1(\OO_V(-3), \II_P(-1)) = \C^{10}$. The objects $\II_P(-1)$ and $\OO_V(-3)$ vary in $\P^3$ respectively $(\P^3)^{\vee}$ that are both fine moduli spaces. Therefore, the component $M'_2$ is a $\P^9$-bundle over the moduli space of pairs $(\OO_V(-3), \II_P(-1))$, i.e., $\P^3 \times (\P^3)^{\vee}$. This means $M'_2$ is smooth and projective.

We are left to show that $M_2$ is the blow up of $M_1$. We already know that $M_2$ is the smooth component of the Hilbert scheme containing twisted cubic curves. Moreover, $M_1$ is smooth by Theorem \ref{thm:first_wall_general}. We want to apply the above result of Moishezon again. The exceptional locus of the map from $M_2$ to $M_1$ is given by the intersection of the two components in the Hilbert scheme. By \cite{PS85} this is an irreducible divisor in $M_2$. Due to $\Ext^1(\II_P(-1), \OO_V(-3)) = \C$ for $P \in V$ the image is as predicted.
\end{proof}



\section*{Appendix: Computing Walls Algorithmically}

The computational side for determining walls in tilt-stability in this article is rather straightforward. In this section we discuss how this problem can be solved by computer calculations. The proof of the following Lemma provides useful techniques for actually determining walls. As before $X$ is a smooth projective threefold, $H$ an ample polarization, and for any $\alpha > 0$, $\beta \in \R$ we have a very weak stability condition $(\Coh^{\beta}(X), Z^{\tilt}_{\alpha, \beta})$.

\begin{lem}
\label{lem:rational_beta}
Let $\beta \in \Q$ and $v$ be the Chern character of some object of $D^b(X)$.
Then there are only finitely many walls in tilt stability for this fixed $\beta$
with respect to $v$.
\begin{proof}
Any wall has to come from an exact sequence $0 \to F \to E \to G \to 0$ in
$\Coh^{\beta}(X)$. Let $H \cdot \ch^{\beta}_{\leq 2}(E) = (R, C, D)$ and
$H \cdot \ch^{\beta}_{\leq 2}(F) = (r, c, d)$. Notice that due to the fact $\beta
\in \Q$ the possible values of $r$, $c$ and $d$ are discrete in $\R$.
Therefore, it will be enough to bound those values to get finiteness.

By the definition of $\Coh^{\beta}(X)$ one has $0 \leq c \leq C$. If $C = 0$,
then $c = 0$ and we are dealing with the unique vertical wall. Therefore, we may
assume $C \neq 0$. Let $\Delta := C^2 - 2RD$. The Bogomolov inequality together
with Lemma \ref{lem:discriminant_properties} implies $0 \leq c^2 - 2rd \leq
\Delta$. Therefore, we get
$$\frac{c^2}{2} \geq rd \geq \frac{c^2 - \Delta}{2}.$$
Since the possible values of $r$ and $d$ are discrete in $\R$, there are
finitely many possible values unless $r = 0$ or $d = 0$.

Assume $R = r = 0$. Then the equality $\nu_{\alpha, \beta}(F) = \nu_{\alpha,
\beta}(E)$ holds if and only if $Cd-Dc = 0$. In particular, it is independent of
$(\alpha, \beta)$. Therefore, the sequence does not define a wall.

If $r = 0$, $R \neq 0$, and $D - d \neq 0$, then using the same type of inequality for $G$ instead of $E$ will finish the proof. If $r = 0$ and $D - d = 0$, then $d = D$ and there are are only finitely many walls like this, because we already bounded $c$.

Assume $D = d = 0$. Then the equality $\nu_{\alpha, \beta}(F) = \nu_{\alpha,
\beta}(E)$ holds if and only if $Rc - Cr = 0$. Again this cannot define a wall.

If $d = 0$, $D \neq 0$, and $R - r \neq 0$, then using the same type of inequality for $G$ instead of $E$ will finish the proof. If $d = 0$ and $R - r = 0$, then $r = R$ and there are are only finitely many walls like this, because we already bounded $c$.
\end{proof}
\end{lem}

Note that together with the structure theorem for walls in tilt stability this
lemma implies that there is a biggest semicircle on each side of the vertical
wall.

The proof of the Lemma tells us how to algorithmically solve the problem of determining all walls on a given vertical line. Assuming that $\beta$ does not give the unique vertical wall, we have the following inequalities for any exact sequence $0 \to F \to E \to G \to 0$ defining a potential wall.
\begin{align*}
0 < H \cdot \ch_1^{\beta} (F) &< H \cdot \ch_1^{\beta} (E), \\
0 < H \cdot \ch_1^{\beta} (G) &< H \cdot \ch_1^{\beta} (E), \\
Q^{\tilt}(F,F) &\geq 0, \\
Q^{\tilt}(G,G) &\geq 0, \\
Q^{\tilt}(E,F) &\geq 0.
\end{align*}

Moreover, we need $H \cdot \ch(F)$ and $H \cdot \ch(G)$ to be in the lattice spanned by Chern characters of objects in $D^b(X)$. Finally, the fact that the Chern classes of $F$ and $G$ are integers puts further restrictions on the possible values of the Chern characters. The code for a concrete implementation in the case of $\P^3$ in \cite{Sage} can be found at

\url{https://sites.google.com/site/benjaminschmidtmath/}.

We computed the previous example of twisted cubics with it and obtained the same walls as above. Similar computations for the case of elliptic quartic curves will occur in a future article joint with Patricio Gallardo and C\'esar Lozano Huerta.


\renewcommand{\refname}{References}

\end{document}